\documentclass[onefignum,onetabnum]{siamart190516}

\usepackage[caption=false]{subfig}
\captionsetup[subtable]{position=bottom}
\captionsetup[table]{position=bottom}

\usepackage{pgfplots}
\usepackage{lipsum}
\usepackage{amsfonts}
\usepackage{graphicx}
\usepackage{epstopdf}
\usepackage{algorithmic,cite}
\usepackage{microtype}
\ifpdf
  \DeclareGraphicsExtensions{.eps,.pdf,.png,.jpg}
\else
  \DeclareGraphicsExtensions{.eps}
\fi


\newsiamremark{remark}{Remark}
\newsiamremark{hypothesis}{Hypothesis}
\crefname{hypothesis}{Hypothesis}{Hypotheses}
\newsiamthm{assumption}{Assumption}

\headers{Convergence rate analysis of SCP$_{\rm ls}$}{Peiran Yu, Ting Kei Pong, and Zhaosong Lu}

\title{Convergence rate analysis of a sequential convex programming method with line search for a class of constrained difference-of-convex optimization problems
\thanks{\funding{The second author was supported partly by Hong Kong Research Grants Council PolyU153005/17p.}}}

\author{Peiran Yu\thanks{Department of Applied Mathematics, The Hong Kong Polytechnic University, Hong Kong
  (\email{peiran.yu@connect.polyu.hk}).}
\and Ting Kei Pong\thanks{Department of Applied Mathematics, The Hong Kong Polytechnic University, Hong Kong
  (\email{tk.pong@polyu.edu.hk}).}
\and Zhaosong Lu\thanks{Department of Industrial and Systems Engineering, University of Minnesota, USA
  (\email{zhaosong@umn.edu}).}
}

\ifpdf
\hypersetup{
  pdftitle={Convergence  analysis on the sequential convex programming  method with monotone line search for multiple constrained difference-of-convex  model},
  pdfauthor={Peiran Yu and Ting Kei Pong and Zhaosong Lu}
}
\fi

\usepackage{amsopn}

\externaldocument{ex_supplement}
\usepackage{amssymb,latexsym,amsmath,enumerate}
\usepackage{bm,setspace}
\allowdisplaybreaks[4]

\newcommand{\revise}[1]{\textcolor{black}{\ignorespaces#1\ignorespaces}}   
\newcommand{\rerevise}[1]{\textcolor{black}{\ignorespaces#1\ignorespaces}}   

\def\R{{\rm I\!R}}
\def\N{{\rm I\!N}}
\def\argmin{\mathop{\rm arg\,min}}
\def\Argmin{\mathop{\rm Arg\,min}}

\begin{document}
\maketitle
\begin{abstract}
In this paper, we study the sequential convex programming method with monotone line search (SCP$_{ls}$) in \cite{Lu10} for a class of difference-of-convex (DC) optimization problems with multiple smooth inequality constraints. The SCP$_{ls}$ is a representative variant of moving-ball-approximation-type algorithms \cite{AuShTe10,BoPa16,BoChPa19,ShTe16} for constrained optimization problems. We analyze the convergence rate of the sequence generated by SCP$_{ls}$ in both nonconvex and convex settings by imposing suitable Kurdyka-{\L}ojasiewicz (KL) assumptions. Specifically, in the nonconvex settings, we assume that a special potential function related to the objective and the constraints is a KL function, while in the convex settings we impose KL assumptions directly on the extended objective function (i.e., sum of the objective and the indicator function of the constraint set). A relationship between these two different KL assumptions is established in the convex settings under additional differentiability assumptions. We also discuss how to deduce the KL exponent of the extended objective function from its Lagrangian in the convex settings, under additional assumptions on the constraint functions. Thanks to this result, the extended objectives of some constrained optimization models such as minimizing $\ell_1$ subject to logistic/Poisson loss are found to be KL functions with exponent $\frac12$ under mild assumptions. To illustrate how our results can be applied, we consider SCP$_{ls}$ for minimizing $\ell_{1-2}$ \cite{YiLoHeXi15} subject to residual error measured by  $\ell_2$ norm/Lorentzian norm \cite{CaRaArBaSa16}. We first discuss how the various conditions required in our analysis can be verified, and then perform numerical experiments to illustrate the convergence behaviors of SCP$_{ls}$.
\end{abstract}

 \section{Introduction}
Constrained optimization problems naturally arise when one attempts to find a solution that minimizes a certain objective under some restrictions, see \cite{vaFr09,AuShTe10,JeYaZh11,BoVa04,CaRaArBaSa16}.
Here, we consider the following specific type of difference-of-convex (DC) constrained optimization problem:
  \begin{equation}\label{Or}
    \begin{split}
    \min_{x\in\R^n} &\    F(x):=f(x) + P_1(x) - P_2(x) +\delta_{g(\cdot)\leq 0} (x),
    \end{split}
  \end{equation}
where $f:\R^n\to \R$ is smooth, $P_1:\R^n\to\R$ and $P_2:\R^n\to\R$ are convex continuous \rerevise{(possibly nonsmooth)}, and $g:\R^n\to \R^m$ is continuous with $\{x:\;g(x)\le0\}\neq\emptyset$.
In typically applications, the $f$ in \eqref{Or} arises as measures for data fidelity, $g$ is used for modeling restrictions on the decision variable $x$, and $P_1-P_2$ is a regularizer for inducing desirable structures; see \cite[Table~1]{GZLHY13} for examples of such regularizers.
In our subsequent algorithmic development for \eqref{Or}, we also consider the following additional assumption.
\begin{assumption}\label{basebase}
    Let $f$, $g$ and $F$ be as in \eqref{Or}.\footnote{\rerevise{Here and throughout the paper, by referring to \eqref{Or}, we mean that the assumptions on $f$, $P_1$, $P_2$ and $g$ stated right after \eqref{Or} are satisfied, i.e., $f:\R^n\to \R$ is smooth, $P_1:\R^n\to\R$ and $P_2:\R^n\to\R$ are convex continuous, and $g:\R^n\to \R^m$ is continuous with $\{x:\;g(x)\le0\}\neq\emptyset$.}}
    \begin{enumerate}[{\rm (i)}]
      \item \revise{ $f:\R^n\to\R$ } has Lipschitz continuous gradient with Lipschitz modulus $L_f$.
      \item \rerevise{For the mapping $g(x)=(g_1(x),...,g_m(x))$, each function $g_i$ has Lipschitz} continuous gradient with Lipschitz modulus $L_{g_i}$.
      \item The function $F$ is level-bounded.
    \end{enumerate}
\end{assumption}
Under Assumption~\ref{basebase}, the solution set of \eqref{Or} is nonempty and $\inf F>-\infty$.

To design algorithms for solving \eqref{Or} under Assumption~\ref{basebase}, one common approach is to resort to the majorization-minimization (MM) procedure: in this procedure, one iteratively constructs and minimizes a surrogate function that locally majorizes $F$; \rerevise{see \cite{BoPa16,DrIo19,DrPa19,DrLe18,LeWr16,SuBaPa17} for related models and discussions}.
For \eqref{Or} under Assumption~\ref{basebase}, one natural way to construct surrogate function is to make use of the 2nd-order Taylor's expansions of $f$ and $g$: the resulting algorithms are the moving balls approximation method (MBA) proposed in \cite{AuShTe10} (for $P_1 = P_2 = 0$) and its variants \cite{BoChPa19,BoPa16}. In each iteration, these algorithms approximate the constraint $g(x)\le 0$ in \eqref{Or} by
       \begin{align}\label{barG}
       \bar  G(x,y,w): =\begin{pmatrix}
   g_1(y) + \langle\nabla g_1(y),x-y\rangle + \frac{w_1}2\|x-y \|^2\\
   \vdots\\
   g_m(y) + \langle\nabla  g_m(y),x-y\rangle + \frac{w_m}2\|x-y \|^2
 \end{pmatrix}\le 0
       \end{align}
for some fixed $(y,w)$: the feasible region of the resulting subproblem is an intersection of $m$ balls.
For the sequence generated by MBA, global convergence to a minimizer was established in \cite{AuShTe10} when $\{f,g_1,\dots,g_m\}$ are in addition convex and the Slater condition holds. The linear convergence of the sequence generated by MBA was also proved in \cite{AuShTe10} when $f$ in \eqref{Or} is additionally  strongly convex. \revise{In \cite{BoPa16}, when $\{f,g_1,\dots,g_m\}$ are semi-algebraic and $P_1 = P_2=0$ in \eqref{Or},  the whole sequence generated by an MBA variant was shown to converge to a critical point and its convergence rate was also established, under the Mangasarian-Fromovitz constraint qualification (MFCQ). }

\revise{\rerevise{When the  DC function $P_1-P_2$ in \eqref{Or} is nonsmooth (these nonsmooth functions arise naturally as regularizers in applications such as sparse recovery \cite{CaRaArBaSa16,GZLHY13,YiLoHeXi15}), the MBA method is not directly applicable. Moreover, when $P_2$ is nonsmooth,
 the multiprox method in \cite{BoChPa19} and the majorization-minimization procedure in \cite[Section~3]{BoPa16} cannot be directly applied to \eqref{Or}.}  Fortunately, under Assumption~\ref{basebase}, problem \eqref{Or} has DC objective and DC constraints: indeed, one can write $f$ and each $g_i$ in \eqref{Or} as the difference of two convex functions as follows:
\[
f(x) = \frac{L_f}{2}\|x\|^2 - \left(\frac{L_f}{2}\|x\|^2 - f(x)\right)\ {\rm and}\ g_i(x) = \frac{ L_{g_i}}{2}\|x\|^2 - \left(\frac{ L_{g_i}}{2}\|x\|^2 - g_i(x)\right).
\]
DC algorithms (DCA) (see, for example, \cite{LeTa18,LeNgTa04}) can thus be applied. A variant that specializes in functional constraints is the sequential convex programming (SCP) method proposed in \cite{Lu10} \footnote{\revise{We would like to point out that the methods proposed in \cite{Lu10} (including SCP and its variant) were designed to solve more general models than \eqref{Or}. In particular, they can deal with problems with \rerevise{constraints involving nonsmooth functions}, and allow for nonmonotone line search.}}; see also \cite[Remark~5]{QuDi11}. When applied to \eqref{Or} under Assumption~\ref{basebase}, this method maintains feasibility at each iteration\footnote{\revise{There       are some DCA variants for solving \eqref{Or} under Assumption~\ref{basebase} that do not maintain feasibility throughout. We refer the interested readers to \cite{LeTaNg12,LeNgTa04,LiBo16,StMi18,TaAn14} for more discussions.}} and each subproblem is constrained over an intersection of balls: thus, this method can also be viewed as a variant of MBA. It was  shown that any accumulation point of the sequence generated by SCP is a stationary point under Slater's condition. However, convergence and convergence rate   of the whole sequence generated remain unknown.\footnote{\revise{We point out that convergence of the whole sequence and the convergence rate generated by some DCA variants were considered in \cite{ArVu20,LeTa18} under suitable Kurdyka-{\L}ojasiewicz (KL) assumptions; however, their problem formulations do not explicitly involve functional constraints as in \eqref{Or}.}}
}

For empirical acceleration, a variant of MBA that involves a line search scheme was proposed in \cite{BoChPa19}, which is called the Multiproximal method with backtracking step sizes (Multiprox$_{bt}$). When applied to \eqref{Or} under Assumption~\ref{basebase}, the sequence generated by Multiprox$_{bt}$ converges to a minimizer when $\{f,g_1,\dots,g_m\}$ are additionally convex, $P_1=P_2=0$ and the Slater condition holds. However, Multiprox$_{bt}$ uses monotone initial step sizes, \revise{i.e., $\tilde \alpha$ in \cite[Eq.~(37)]{BoChPa19} is nondecreasing as the algorithm progresses,} which rules out widely used choices such as the truncated \revise{Barzilai}-Borwein step sizes \cite{BaBo88,BirMarRay00}.
\revise{On the other hand, the line search variant of SCP proposed in \cite{Lu10} can incorporate flexible line search schemes like the truncated \revise{Barzilai}-Borwein step size and is general enough to be applied to \eqref{Or} under Assumption~\ref{basebase} with possibly nonsmooth $P_1-P_2$.
In \cite{Lu10}, the well-definedness of the proposed algorithm was established, and it was also shown that any accumulation point of it is a stationary point under Slater's condition. However, convergence of the whole sequence generated and the corresponding convergence rate is still open.}

\revise{In this paper, we further study the line search variant of the SCP method  proposed in \cite{Lu10} with its line search being monotone, i.e., $M$ in \cite[Eq.~(22)]{Lu10} being $0$. We call this variant SCP$_{ls}$; see Algorithm~\ref{alg1} below.
We analyze the convergence properties of the sequence generated by SCP$_{ls}$ for solving \eqref{Or} under Assumption~\ref{basebase}. }The main convergence rate analysis of SCP$_{ls}$ is presented in Section~\ref{sec3}. We derive global convergence rate of the sequence generated by SCP$_{ls}$ in the following two scenarios:
\begin{itemize}
          \item {\em \revise{ $F$ in \eqref{Or}} is possibly  nonconvex with each $g_i$ being twice continuously differentiable and $P_2$ being Lipschitz continuously differentiable  on an open set $\Gamma$ that contains the set of stationary points of $F$.}

              Our analysis is based on the following specially constructed potential function:
\begin{align}\label{barF}
\bar F(x,y,w) = f(x) + P_1(x) - P_2(x) + \delta_{\bar  G(\cdot)\le 0}(x,y,w),
\end{align}
where $\bar G$ is defined  as in \eqref{barG}. Under MFCQ, we characterize the local convergence rate of the sequence generated by SCP$_{ls}$ according to the Kurdyka-{\L}ojasiewicz (KL) exponent of $\bar F$.  Note the mapping  $(x,y) \mapsto \bar F(x,y,L)$ with \rerevise{$P_2 = 0$} and $L$ being a constant positive vector (related to the step size)  was used previously in \cite{BoPa16} for establishing the convergence of an MBA variant when  \rerevise{$P_2 = 0$} and $\{f,g_1,\dots,g_m\}$ in \eqref{Or} are semi-algebraic. \revise{ This kind of potential functions was called ``value function" in \cite{Pa16} and was used there for deducing the global convergence properties  of the composite Gauss-Newton method for composite optimization problems. Our potential function $\bar F$ allows us to deal with more flexible stepsize rules than those studied in \cite{Pa16, BoPa16}.}

          \item {\em \revise{$\{f,g_1\dots,g_m\}$ in \eqref{Or}} are convex and $P_2= 0$.}

          \revise{This same convex setting was considered in \cite[Section 3.2.3]{BoChPa19}.
          In this setting,} we impose KL assumptions directly on $F$ in \eqref{Or} (instead of on $\bar F$). In particular, a local {\em linear} convergence rate is established when $F$ is a KL function with exponent $\frac12$, under MFCQ. This is different from many existing analysis (see, for example, \cite{AtBoReSo10,BoPa16,LiPong16,Ochs19}), which typically make use of the KL property of a potential function constructed out of $F$ instead of $F$ itself.
\end{itemize}

        In Section~\ref{KLofFconvex}, we study a relationship between the KL property of $\bar F$ in \eqref{barF} and that of $F$ in \eqref{Or}. Then, we present a ``calculus rule" that deduces  the KL exponent of \revise{ $F$ in \eqref{Or} }from its Lagrangian in the convex settings, under some mild assumptions. This enables us to deduce that \revise{the function $F$ corresponding} to minimizing  $\ell_1$ subject to logistic/Poisson loss is a KL function with exponent $\frac12$ under mild conditions.

         In Section~\ref{applications}, we  discuss some concrete models to which SCP$_{ls}$ can be applied. Specifically, we consider models of the following form:\vspace{-0.1cm}
         \begin{equation}\label{introapp}\vspace{-0.1cm}
         \begin{array}{rl}
         \min\limits_{x} &\ \|x\|_1 - \mu\|x\|\\
         {\rm s.t.}&\ \ell(Ax-b)\le\delta,
         \end{array}
          \end{equation}
         where $\mu\in[0,1]$, $A\in\R^{q\times n}$ has {\em full row rank}, $b\in\R^q$, $\ell:\R^q\to \R_+$ is analytic with Lipschitz continuous gradient and satisfies $\ell(0)= 0$, and $\delta \in (0,\ell(-b))$. This model arises in compressed sensing where the measurements $b$ may be corrupted by  different types of noise; see \cite{CaBaAy10}. We focus on two concrete choices of $\ell$: the square of norm (for noise following Gaussian distribution) and the Lorentzian norm (for noise following Cauchy distribution). For these two choices, we provide suitable conditions on the problem data so that the assumptions in our convergence results are satisfied. Then we perform numerical tests on solving \eqref{introapp} with $\ell$ being either the square of norm or the Lorentzian norm via \revise{ two methods: SCP$_{ls}$ and SCP \cite{Lu10}.} We observe that SCP$_{ls}$ appears to converge linearly and \revise{is much faster}.

\section{Notation and preliminaries}
In this paper, we let $\R$ denote the set of real numbers and $\N_+$ denote the set of positive integers. The $n$-dimensional Euclidean space is denoted by $\R^n$, and the nonnegative orthant is denoted by $\R^n_+$. For two vectors $x$ and $y\in \R^n$, we write $x\ge y$ if $x_i\ge y_i$ for all $i$. The Euclidean norm of $x$ is denoted by $\|x\|$, the inner product of $x$ and $y$ is denoted by $\left<x,y\right>$, and the $\ell_1$ norm of $x$ is denoted by $\|x\|_1$. For $x\in \R^n$ and \rerevise{$r\ge 0$}, we let $B(x,r)$ denote the closed ball centered at $x$ with radius $r$, i.e., $B(x,r) = \{y:\; \|x - y\|\le r\}$.

We say that an extended-real-valued function $f: \R^{n}\rightarrow (-\infty,\infty]$ is proper if its domain ${\rm dom} f:=\{x:\; f(x)<\infty\}\neq \emptyset$. A proper function $f$ is said to be closed if it is lower semicontinuous. For a proper function $f$, the regular subdifferential of $f$ at $x\in {\rm dom} f$ is defined by\vspace{-0.1cm}
\[\vspace{-0.1cm}
\hat{\partial} f(x):=\bigg\{\zeta :\; \liminf\limits_{z\rightarrow x,z\neq x}\frac{f(z)-f(x)-\left<\zeta,z-x\right>}{\|z-x\|}\ge 0 \bigg\}.
\]
The (limiting) subdifferential of $f$ at $x\in {\rm dom} f$  is defined by\vspace{-0.1cm}
\begin{equation*}\vspace{-0.1cm}
\partial f(x):=\bigg\{\zeta :\; \exists x^{k}\stackrel{f}{\rightarrow} x, \zeta^{k}\rightarrow \zeta \ {\rm with\ } \zeta^k \in \hat{\partial} f(x^k)\  {\rm for\  each}\  k \bigg\},
\end{equation*}
where $x^{k}\stackrel{f}{\rightarrow} x$ means both $x^{k}\to x$ and $f(x^{k})\to f(x)$. Moreover, we set $\partial f(x) = \hat \partial f(x) = \emptyset$ for $x\notin {\rm dom}\, f$ by convention, and we write ${\rm dom}\,\partial f:= \{x:\; \partial f(x)\neq \emptyset\}$.
\revise{When $f$ is proper convex, thanks to \cite[Proposition~8.12]{Lu10}, the limiting subdifferential and regular subdifferential  of $f$ at an $x\in{\rm dom}\, f$ reduce to the classical subdifferential}, which is given by
\[
\partial f(x)=\{\zeta:\; \left<\zeta,y- x\right>\le f(y) - f(x)\ \rerevise{{\rm for\ all \ }y}\}.
\]
For a nonempty set $C$, the indicator function $\delta_C$ is defined as
\[
\delta_C(x):=
\begin{cases}
0&  x\in C,\\
\infty & x\notin C.
\end{cases}
\]
The normal cone (resp., regular normal cone) of $C$ at an $x\in C$ is defined as $N_C(x):=\partial \delta_C(x)$ (resp., $\hat N_C(x) := \hat\partial \delta_C(x)$), and the distance from a point $x\in \R^n$ to $C$ is denoted by ${\rm dist}(x,C)$.

We next recall the KL property and the notion of KL exponent; see \cite{Loja63,Kur98,AtBo09,AtBoReSo10,AtBoSv13,LiPo18}. This property has been used extensively for analyzing convergence properties  of first-order methods; see, for example, \cite{AtBo09,AtBoReSo10,AtBoSv13,BoSaTe14,WeChPo18}.

\begin{definition}[{{\bf Kurdyka-{\L}ojasiewicz property and exponent}}]
  We say that a proper closed function $h:\R^n\to(-\infty,\infty]$ satisfies the Kurdyka-{\L}ojasiewicz (KL) property at an $\hat x\in {\rm dom} \partial h$ if there are $a\in (0,\infty]$, a neighborhood $V$ of $\hat{x}$ and a continuous concave function $\varphi: [0,a)\rightarrow [0,\infty) $ with $\varphi(0)=0$ such that
  \begin{enumerate}[{\rm (i)}]
    \item $\varphi$ is continuously differentiable on $(0, a)$ with $\varphi'>0$ on $(0,a)$;
    \item for any $x\in V$ with $h(\hat{x})<h(x)<h(\hat{x})+a$, it holds that
     \begin{equation}\label{phichoice}
     \varphi'(h(x)-h(\hat{x})){\rm dist}(0,\partial h(x))\ge 1.
     \end{equation}
  \end{enumerate}
  If $h$ satisfies the KL property at $\hat x\in {\rm dom}\partial h$ and \revise{ $\varphi$ in \eqref{phichoice}} can be chosen as $\varphi(\nu) = a_0\nu^{1-\alpha}$ for some $a_0>0$ and $\alpha\in[0,1)$, then we say that $h$ satisfies the KL property at $\hat x$ with exponent $\alpha$.

  A proper closed function $h$ satisfying the KL property at every point in ${\rm dom} \partial h$ is called a KL function, and a proper closed function $h$ satisfying the KL property with exponent $\alpha\in [0,1)$ at every point in ${\rm dom} \partial h$ is called a KL function with exponent $\alpha$.
\end{definition}

There are many examples of KL functions. For instance,  proper closed semi-algebraic functions and proper subanalytic functions that have closed domains and are continuous on their domains are \rerevise{KL functions}; see \cite{AtBoReSo10} and  \cite[Theorem~3.1]{BoDaLe07}, respectively.

Now we recall the definition of stationary points of \eqref{Or} when $g_i$ are smooth.
\begin{definition}[\textbf{Stationary point}]\label{def:stat}
Consider \eqref{Or} and assume that each $g_i$ is smooth. We say that an $x\in\R^n$ is a stationary point of \eqref{Or} if there exists $\lambda\in\R^m_+$ such that $(x,\lambda)$ satisfies\vspace{-0.1 cm}
 \begin{equation*}\vspace{-0.1 cm}
 g(x)\le0,\ \lambda_i g_i(x) = 0 {\rm \ for\ all\ } i, {\rm \ and \ }0\in  \nabla f(x)  + \partial P_1(x) - \partial P_2(x) +  \sum_{i=1}^m\lambda_i\nabla g_i(x).
  \end{equation*}
\end{definition}

The following assumption will be used repeatedly throughout this paper.
\begin{assumption}\label{baseass}
Each $g_i$ in \eqref{Or} is smooth and the Mangasarian-Fromovitz constraint qualification (MFCQ) holds  in the whole domain of $F$ in \eqref{Or}, i.e., for every $x$ satisfying $g(x)\le 0$, there exists $d\in\R^n$ such that
           \[
           \left<\nabla g_i(x),d\right><0{\rm \ for \  each\  }i\in I(x):=\{j:\;\;g_j(x)=0\}.
           \]
\end{assumption}

Under Assumptions~\ref{basebase} and \ref{baseass}, it is routine to show that any local minimizer of \eqref{Or} is a stationary point in the sense of Definition~\ref{def:stat}.
 In fact, let $\hat x$ be a local minimizer of \eqref{Or}. Using \cite[Theorem~10.1]{RoWe97}, we have
\begin{equation}\label{sta1}
  \begin{split}
  &0\in\partial F(\hat x) \stackrel{{\rm (a)}}\subseteq\nabla f(\hat x) + \partial P_1 (\hat x) + \partial(-P_2)(\hat x) + \partial  \delta_{g(\cdot)\le 0}(\hat x)\\
  &\stackrel{{\rm (b)}}\subseteq  \nabla f(\hat x) + \partial P_1 (\hat x) +\bar \partial( - P_2)(\hat x) + \partial  \delta_{g(\cdot)\le 0}(\hat x)\\
  &\stackrel{{\rm (c)}}{=}  \nabla f(\hat x) + \partial P_1 (\hat x) -\bar \partial P_2(\hat x) + \partial  \delta_{g(\cdot)\le 0}(\hat x)\\
  &= \nabla f(\hat x) + \partial P_1 (\hat x) -\partial P_2(\hat x) + \partial  \delta_{g(\cdot)\le 0}(\hat x),
  \end{split}
\end{equation}
where (a)  follows from \cite[Exercise~10.10]{RoWe97}, the inclusion (b) uses \cite[Theorem~5.2.22]{BoZh05}, where $\bar \partial(-P_2)$ is the Clarke subdifferential of $-P_2$, the equality (c) uses \cite[Proposition~2.3.1]{Clarke90} and  the last equality  holds because of  the convexity of $P_2$ and \cite[Theorem~6.2.2]{BoLe06}.
In addition, we can deduce that
\begin{align*}
&\partial  \delta_{g(\cdot)\le 0}(\hat x) = N_{g(\cdot)\le 0}(\hat x) =  \left\{\sum_{i=1}^m\lambda_i\nabla g_i(\hat x): \lambda\in N_{-\R^m_+}(g(\hat x))\right\} \\
&= \left\{\sum_{i=1}^m\lambda_i\nabla g_i(\hat x): \lambda\in\R^m_+, \ \lambda_ig_i(\hat x) = 0\ {\rm for }\ i=1,\dots,m\right\},
\end{align*}
where the second equality follows from MFCQ and \cite[Theorem~6.14]{RoWe97} and the last equality follows from the definition of normal cone. The above display together with \eqref{sta1} shows that $\hat x$ is a stationary point of \eqref{Or}. \revise{In passing, we would like to point out that $x^*$ is a stationary point of \eqref{Or} in the sense of Definition~\ref{def:stat} if and only if there exists $\xi^*$ such that $0 \in \partial \widetilde F(x^*,\xi^*)$, where $\widetilde F(x,\xi):= f(x) + P_1(x) - \left<\xi,x\right> + P^*_2(\xi) +\delta_{g(\cdot)\le 0}(x)$, with $\{P_1,P_2\}$ given in \eqref{Or} and $P^*_2$ being the Fenchel conjugate of $P_2$. This type of stationary points is widely used in the DC literature; see, for example, \cite{TaAn97,TaAn14,WeChPo18}. Note that there are other concepts of stationarity used in the literature, such as the Clarke stationarity, d-stationarity and B-stationarity; we refer the readers to \cite{PaRaAl17,vade19,JoBaKaMaTa18} for more discussions. The notion of stationarity defined in Definition \ref{def:stat} is in general weaker than these aforementioned notions.   }

Before ending this section, we introduce the algorithm we analyze and present some auxiliary results for our subsequent analysis.
The algorithm, SCP$_{ls}$ proposed in \cite{Lu10}, is presented in Algorithm~2.1, where $\bar G$ is defined as in \eqref{barG}. Notice that by rearranging terms of the constraint functions of the subproblem \eqref{subp}, we can see that the constraint there is equivalent to
\begin{align}\label{rit}
x\in \bigcap_{i=1}^mB\left(\widetilde s_i,\sqrt{\widetilde R_i}\right),
\end{align}
where $\widetilde s_i:=x^t - \frac{1}{\left(\widetilde L_{g}\right)_{i}}\nabla g_i(x^t)$
and $\widetilde R_i:=\left\|\frac{\nabla g_i(x^t) }{\left(\widetilde L_{g}\right)_{i}}\right\|^2 - \frac{2}{\left(\widetilde L_{g}\right)_{i}}g_i(x^t)$.
Thus, when $m = 1$, the constraint reduces to a {\em single} ball constraint and a simple root-finding scheme was discussed in \cite{ShTe16} for exactly and efficiently solving \revise{ the subproblem \eqref{subp}} with $m = 1$, $P_2= 0$ and $P_1$ being the $\ell_1$ norm or the nuclear norm, etc. However, solving subproblem \eqref{subp} in general requires an iterative solver; see \cite[Section~6]{AuShTe10} for the case when $P_1=P_2= 0$.

\begin{algorithm}
\caption{ Sequential  convex programming  method with monotone line search (SCP$_{ls}$) for \eqref{Or} under Assumption~\ref{basebase}\label{alg1}}
\begin{algorithmic}
\STATE  Choose parameters $c>0$, $0<\b{L}<{\rm \bar L} $, $\tau>1$ and an $x^0$ with $g(x^0)\le 0$. Set $t=0$.
\begin{description}
\item[\bf Step 1.] Pick any $\xi^t \in \partial P_2(x^t)$.

\item[\bf Step 2.] Choose $L_f^{t,0}\!\!\in\!\![\b{L}, {\rm \bar L} ]$  and $ L_g^{t,0}\!\!\in\!\![\b L, {\rm \bar L}]^m$ arbitrarily. Set $\widetilde L_f \!\!= \!\!L_f^{t,0}$ and $\widetilde L_{g} \!\!=\!\!L_{g}^{t,0}$.

\item[\bf Step 3.] Compute
\begin{equation}\label{subp}
\begin{split}
\widetilde x =\argmin_{x}&\left\{\langle\nabla f(x^t) - \xi^t,x - x^t\rangle + \frac{\widetilde L_f}{2}\|x - x^t\|^2 + P_1(x)\right\}\\
 {\rm s.t.}& \ \  \bar G(x,x^t,\widetilde L_{g})\le 0.
\end{split}
\end{equation}
\begin{description}
  \item[\bf Step 3a)]  If $g(\widetilde x)\le 0$ and
\begin{align}\label{decrease}
F(\widetilde x)\leq  F(x^t) - \frac{c}{2}\|\widetilde x - x^t\|^2
\end{align}
holds, go to step 4.
  \item[\bf Step 3b)] If $g(\widetilde x)\not\le 0$, let $\widetilde L_{g} \leftarrow \tau\widetilde L_{g}$ and go to step 3.
  \item[\bf Step 3c)] If \eqref{decrease} does not hold,
let $\widetilde L_{f} \leftarrow \tau \widetilde L_{f}$ and go to step 3.
\end{description}
\item[\bf Step 4.] If a termination criterion is not met, set $L_{g}^t = \widetilde L_{g}$, $L_f^t = \widetilde L_f$ and $x^{t+1} = \widetilde x$. Update $t\leftarrow t+1$ and go to {\bf Step 1.}
    \end{description}
\end{algorithmic}
\end{algorithm}

In the next lemma, we discuss the well-definedness of SCP$_{ls}$ and also establish some inequalities needed in our analysis below.
Note that the well-definedness of SCP$_{ls}$ was already proved in \cite[Theorem~3.6]{Lu10} in a more general setting. Here we include its proof for completeness.
\begin{lemma}\label{rdayu0}
    Consider \eqref{Or} and suppose that Assumptions~\ref{basebase} and \ref{baseass} hold. Then the following statements hold:
    \begin{enumerate}[{\rm(i)}]
    \item SCP$_{ls}$ is well defined, i.e., the subproblems \eqref{subp} are well defined and there exists a $k_0\in \mathbb{N}_+$ (independent of $t$) such that in any iteration $t\ge 0$, the inner loop stops after at most $k_0$ iterations.
    \item The sequence $\{(L_f^t,L_g^t)\}$ generated by SCP$_{ls}$ is bounded.
    \item For each $i\in\{1,\dots,m\}$, each $t\ge 0$ and each $(\widetilde L_f,\widetilde L_g)$, the $\widetilde R_i$ in \eqref{rit} is positive.
    \item For each $t\ge 0$ and each $(\widetilde L_f,\widetilde L_g)$, the problem \eqref{subp} has  a Lagrange multiplier $\widetilde \lambda$. Let $\widetilde L_{fg}: =\widetilde L_f +  \langle\widetilde \lambda, \widetilde L_{g}\rangle$ and let $\widetilde x$ be as in \eqref{subp}.  Then
\begin{align}\label{lamg0}
 \widetilde \lambda_i \left(g_i(x^t) + \langle\nabla g_i(x^t),\widetilde x-x^t\rangle + \frac{( \widetilde L_{g})_{i}}2\|\widetilde x-x^t \|^2 \right)= 0 {\rm \ for \ all\ }i,
\end{align}
and
 \begin{equation}\label{firstoderofsubp}
  0\in \nabla f(x^t) - \xi^t + \widetilde L_{fg}(\widetilde x - x^t) + \partial P_1(\widetilde x) + \sum_{i=1}^m\widetilde \lambda_i\nabla g_i(x^t),
  \end{equation}
  where $\{x^t\}$ and $\{\xi^t\}$ are generated by SCP$_{ls}$.
  Moreover, if $g(\widetilde x)\le 0$, then for any $x\in \R^n$ we have
  \begin{equation}\label{strongc}
  \begin{split}
&F(\widetilde x)\!\le\! f(x^t) \!+ \!\left<\nabla f(x^t) - \xi^t,x\! -\!x^t\right>\! +\! \frac{\widetilde L_{fg}}{2}\|x \!-\!x^t\|^2 \!+\! P_1(x) \!-\! P_2(x^{t}) \\
     &+\!\sum_{i=1}^m\widetilde \lambda_i \left(g_i(x^t) + \langle\nabla g_i(x^t), x-x^t\rangle  \right)\!-\!\frac{\widetilde L_{fg}}{2}\|x - \widetilde x\|^2\! - \!\frac{\widetilde L_f - L_f}{2}\|\widetilde x -x^t\|^2. \end{split}
  \end{equation}
\end{enumerate}
\end{lemma}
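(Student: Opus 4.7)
The plan is to prove the four statements in the order (iii), (i)--(ii), (iv), since later parts make essential use of earlier ones. For (iii), note first that $x^t$ is always feasible for the original problem: $x^0$ is chosen feasible and every subsequent $x^{t+1}=\widetilde x$ passes the test in Step~3a, so $g(x^t)\le 0$. Consequently, $\widetilde R_i\ge\|\nabla g_i(x^t)\|^2/(\widetilde L_g)_i^2\ge 0$, and the strict inequality holds because MFCQ at $x^t$ rules out $g_i(x^t)=0$ with $\nabla g_i(x^t)=0$. The ball representation \eqref{rit} then shows that the feasible region of \eqref{subp} is a compact convex set (nonempty since $\bar G_i(x^t,x^t,\widetilde L_g)=g_i(x^t)\le 0$, so $x^t$ is feasible), while the objective of \eqref{subp} is $\widetilde L_f$-strongly convex, so a unique minimizer exists and (iii) is established.

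For parts (i)--(ii), I show that the two backtracking loops terminate after a bounded number of steps. If $(\widetilde L_g)_i\ge L_{g_i}$ for every $i$, then by the descent lemma applied to each $g_i$, $g_i(\widetilde x)\le\bar G_i(\widetilde x,x^t,\widetilde L_g)\le 0$, so the test in Step~3b passes; since each update multiplies $\widetilde L_g$ by $\tau>1$ starting from within $[\b{L},{\rm \bar L}]^m$, Step~3b triggers at most $\lceil\log_\tau(\max_i L_{g_i}/\b{L})\rceil$ times. If additionally $\widetilde L_f\ge L_f+c$, combining (a) the descent lemma for $f$, (b) the subgradient inequality $P_2(\widetilde x)\ge P_2(x^t)+\langle\xi^t,\widetilde x-x^t\rangle$, and (c) the optimality of $\widetilde x$ in \eqref{subp} tested against the feasible point $x^t$ (which gives $\langle\nabla f(x^t)-\xi^t,\widetilde x-x^t\rangle+\tfrac{\widetilde L_f}{2}\|\widetilde x-x^t\|^2+P_1(\widetilde x)\le P_1(x^t)$), I obtain $F(\widetilde x)\le F(x^t)-\tfrac{\widetilde L_f-L_f}{2}\|\widetilde x-x^t\|^2\le F(x^t)-\tfrac c2\|\widetilde x-x^t\|^2$, so \eqref{decrease} holds. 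Hence Step~3c also triggers only a bounded number of times, which yields (i) and the uniform bounds required in (ii).

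For the existence of $\widetilde\lambda$ and formulas \eqref{lamg0}--\eqref{firstoderofsubp} in (iv), I first verify Slater's condition for \eqref{subp}. Picking $d$ from MFCQ at $x^t$, for small $\epsilon>0$ and every $i$,
\[
\bar G_i(x^t+\epsilon d,x^t,\widetilde L_g)=g_i(x^t)+\epsilon\langle\nabla g_i(x^t),d\rangle+\tfrac{(\widetilde L_g)_i\epsilon^2}{2}\|d\|^2<0,
\]
handling $i\in I(x^t)$ (where $g_i(x^t)=0$ and the linear term dominates) and $i\notin I(x^t)$ (where $g_i(x^t)<0$ dominates) separately. Standard convex duality then furnishes $\widetilde\lambda\in\R^m_+$ satisfying the KKT system for \eqref{subp} at $\widetilde x$: complementary slackness gives exactly \eqref{lamg0}, and the Fermat rule, after collecting the coefficients of $\widetilde x-x^t$ into $\widetilde L_{fg}$, produces \eqref{firstoderofsubp}.

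Finally, to prove \eqref{strongc}, I exploit the $\widetilde L_{fg}$-strong convexity of the subproblem Lagrangian $L(\cdot,\widetilde\lambda)$. Since $0\in\partial_xL(\widetilde x,\widetilde\lambda)$ by \eqref{firstoderofsubp}, $L(\widetilde x,\widetilde\lambda)\le L(x,\widetilde\lambda)-\tfrac{\widetilde L_{fg}}{2}\|x-\widetilde x\|^2$ for every $x$, and by \eqref{lamg0} we have $L(\widetilde x,\widetilde\lambda)=\langle\nabla f(x^t)-\xi^t,\widetilde x-x^t\rangle+\tfrac{\widetilde L_f}{2}\|\widetilde x-x^t\|^2+P_1(\widetilde x)$. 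Assuming $g(\widetilde x)\le 0$, combining the descent lemma for $f$ with the subgradient inequality for $P_2$ yields $F(\widetilde x)\le f(x^t)-P_2(x^t)+L(\widetilde x,\widetilde\lambda)-\tfrac{\widetilde L_f-L_f}{2}\|\widetilde x-x^t\|^2$; substituting the strong-convexity bound on $L(\widetilde x,\widetilde\lambda)$ and regrouping the quadratic terms via $\widetilde L_{fg}=\widetilde L_f+\langle\widetilde\lambda,\widetilde L_g\rangle$ converts $\tfrac{\widetilde L_f}{2}\|x-x^t\|^2+\sum_i\widetilde\lambda_i\bar G_i(x,x^t,\widetilde L_g)$ into $\tfrac{\widetilde L_{fg}}{2}\|x-x^t\|^2+\sum_i\widetilde\lambda_i(g_i(x^t)+\langle\nabla g_i(x^t),x-x^t\rangle)$, which is precisely \eqref{strongc}. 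The main obstacle I anticipate is the Slater verification for the subproblem from MFCQ on the original constraints; once the Lagrange multiplier is secured, the remainder is systematic bookkeeping around the Lagrangian's strong convexity.
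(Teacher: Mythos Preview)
Your proposal is correct and follows essentially the same route as the paper: MFCQ at $x^t$ yields Slater's condition for the subproblem, KKT then gives $\widetilde\lambda$ and \eqref{lamg0}--\eqref{firstoderofsubp}, strong convexity of the subproblem Lagrangian yields \eqref{strongc}, and the descent lemmas bound the number of backtracking steps. One organizational point: your statement that ``$x^t$ is always feasible'' in (iii) presupposes that each previous inner loop terminated, so the clean way to present this is, as the paper does, by induction on $t$---assume $g(x^t)\le 0$, establish (iii) and well-definedness of \eqref{subp} for this $t$, then show the inner loop terminates with $g(x^{t+1})\le 0$.

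There is one genuine (minor) difference in how you establish the sufficient-decrease bound in (i). You compare $\widetilde x$ directly against the feasible point $x^t$ in \eqref{subp}, obtaining $F(\widetilde x)\le F(x^t)-\tfrac{\widetilde L_f-L_f}{2}\|\widetilde x-x^t\|^2$ and hence requiring $\widetilde L_f\ge L_f+c$. The paper instead first proves \eqref{strongc} and then specializes it to $x=x^t$; because \eqref{strongc} carries the extra term $-\tfrac{\widetilde L_{fg}}{2}\|x-\widetilde x\|^2$, this yields the sharper bound $F(\widetilde x)\le F(x^t)-\big(\widetilde L_f-\tfrac{L_f}{2}\big)\|\widetilde x-x^t\|^2$, so only $\widetilde L_f\ge\tfrac{L_f+c}{2}$ is needed. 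Your route is logically lighter (it avoids invoking the Lagrange multiplier to prove termination), while the paper's route gives a tighter threshold and reuses \eqref{strongc}; either suffices for (i)--(ii).
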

\begin{proof}
Let an $x^t$ satisfying $g(x^t)\le 0$ be given for some $t\ge 0$. We will first show that the corresponding subproblems \eqref{subp} are well defined (for any $(\widetilde L_f,\widetilde L_g)$) and the conclusions of items (iii) and (iv) hold for this $t$. Using these, we will then show that there exists $k_0$ (independent of $t$) so that the inner loop in Step 3 terminates after $k_0$ iterations and returns an $x^{t+1}$ that satisfies $g(x^{t+1})\le 0$. This together with $g(x^0)\le 0$ and an induction argument will show that SCP$_{ls}$ is well defined and that items (iii) and (iv) hold for all $t\ge 0$. Finally, we show that $\{(L^t_f,L^t_g)\}$ is bounded.

Suppose that an $x^t$ satisfying $g(x^t)\le 0$ is given for some $t\ge 0$. Notice that for any $(\widetilde L_f,\widetilde L_g)$, the feasible region of \eqref{subp} is nonempty (it contains $x^t$) and the subproblem is to minimize a strongly convex continuous function over a nonempty closed convex set. Thus, $\widetilde x$ exists and is unique.
Now, fix any $i\in\{1,\dots,m\}$. Since $g(x^t)\le 0$ and $(\widetilde L_{g})_i> 0$, we have $- \frac{2}{(\widetilde L_{g})_i}g_i(x^t)\geq 0$ and thus $\widetilde R_i\ge 0$. Suppose to  the contrary that $\widetilde R_i = 0$. Then we have $\nabla g_i(x^t) = 0$ and $g_i(x^t) = 0$,
contradicting Assumption~\ref{baseass}. Thus, we must have $\widetilde R_i>0$ at the $t^{\rm th}$ iteration.

Next, using a similar proof of \cite[Proposition~2.1(iii)]{AuShTe10}, we deduce using MFCQ that the Slater condition holds for \eqref{subp} for this $t$. Therefore, using \cite[Corollary~28.2.1, Theorem~28.3]{Ro70}, for problem \eqref{subp}, there exists a Lagrange multiplier  $\widetilde \lambda\in\R_+^m$ such that
\eqref{lamg0} holds at the $t^{\rm th}$ iteration and $\widetilde x$ is a minimizer of the following function:
\begin{align*}
L_t(x,\widetilde\lambda):= &f(x^t) + \left<\nabla f(x^t),x -x^t\right> + \frac{\widetilde L_f}{2}\|x -x^t\|^2 + P_1(x)- P_2(x^{t})   \\
  &- \left<\xi^t,x - x^t\right> +\langle \widetilde \lambda,\bar G(x,x^t,\widetilde L_g)\rangle.
\end{align*}
This together with \cite[Theorem~10.1, Exercise~8.8]{RoWe97} shows that \eqref{firstoderofsubp} holds at the $t^{\rm th}$ iteration.

In addition, note that $x\mapsto L_t(x,\widetilde\lambda)$ is strongly convex with modulus $\widetilde L_{fg}$. Then we see that for any $x\in \R^n$,
\begin{equation}\label{strongcpre}
\begin{aligned}
  &f(x^t) \!+\! \left<\nabla f(x^t),\widetilde x \!-\!x^t\right> \!+\! \frac{\widetilde L_f}{2}\|\widetilde x \!-\!x^t\|^2 \!+\! P_1(\widetilde x)\!-\! P_2(x^{t}) \!-\! \left<\xi^t,\widetilde x \!-\! x^t\right>\\
  &=L_t(\widetilde x,\widetilde \lambda)  \le L_t(x,\widetilde \lambda) \!-\! \frac{\widetilde L_{fg}}{2}\|x \!-\! \widetilde x\|^2\\
  & = f(x^t) \!+\! \left<\nabla f(x^t), x \!-\!x^t\right> \!+\! \frac{\widetilde L_{fg}}{2}\|x \!-\!x^t\|^2 \!+\! P_1( x)\!-\! P_2(x^{t}) \!-\! \left<\xi^t, x \!-\! x^t\right>  \\
  &\ \ \ \ +\!\sum_{i=1}^m\widetilde \lambda_i \left(g_i(x^t) \!+\! \langle\nabla g_i(x^t), x\!-\!x^t\rangle  \right)  \!-\! \frac{\widetilde L_{fg}}{2}\|x \!-\! \widetilde x\|^2,
\end{aligned}
\end{equation}
where the first equality makes use of \eqref{lamg0}.
On the other hand, since $f$ has Lipschitz continuous gradient (with modulus $L_f$), if $g(\widetilde x)\le 0$, then we have for any $x\in\R^n$ that
\begin{eqnarray*}
  &&F(\widetilde x)=f(\widetilde x)+ P_1(\widetilde x) - P_2(\widetilde x)\\
  &&\le f(x^t) +\left<\nabla f(x^t),\widetilde x - x^t\right> + \frac{L_f}{2}\|\widetilde x -x^t\|^2 + P_1(\widetilde x) - P_2(\widetilde x)\\
 &&= f(x^t) \!+\! \left<\nabla f(x^t),\widetilde x \!-\!x^t\right> \!+\! \frac{\widetilde L_f}{2}\|\widetilde x \!-\!x^t\|^2 \!+\! P_1(\widetilde x) \!-\! P_2(\widetilde x) \!-\! \frac{\widetilde L_f \!-\! L_f}{2}\|\widetilde x \!-\!x^t\|^2\\
    && \stackrel{{\rm (a)}}{\leq} f(x^t) + \left<\nabla f(x^t),\widetilde x -x^t\right> + \frac{\widetilde L_f}{2}\|\widetilde x -x^t\|^2 + P_1(\widetilde x)\\
    && \ \ \ \ - P_2(x^{t}) - \left<\xi^t,\widetilde x - x^t\right>- \frac{\widetilde L_f - L_f}{2}\|\widetilde x -x^t\|^2\\
    &&\leq f(x^t) + \left<\nabla f(x^t),x -x^t\right> + \frac{\widetilde L_{fg}}{2}\|x -x^t\|^2 + P_1(x) - P_2(x^{t}) - \left<\xi^t,x - x^t\right> \\
    &&\ \ \ \ +\sum_{i=1}^m\widetilde \lambda_i \left(g_i(x^t) + \langle\nabla g_i(x^t), x-x^t\rangle  \right)- \frac{\widetilde L_{fg}}{2}\|x - \widetilde x\|^2 - \frac{\widetilde L_f - L_f}{2}\|\widetilde x -x^t\|^2,
  \end{eqnarray*}
where (a) uses the convexity of $P_2$ and the fact that $\xi^t\in \partial P_2(x^t)$, while the last inequality holds due to \eqref{strongcpre}. This shows that \eqref{strongc} holds at the $t^{\rm th}$ iteration.

Now we show that there exists $k_0$ (independent of $t$) so that the inner loop in Step 3 terminates after finitely many iterations at the $t^{\rm th}$ iteration and returns an $x^{t+1}$ satisfying $g(x^{t+1})\le 0$.
To this end, let $k_1\in\N_+$ be such that $\b{L} \tau^{k_1} > \max\{\frac12(c + L_f), L_{g_1}, \ldots, L_{g_m}\}$. Then $k_1$ does not depend on $t$ and we have
 \begin{equation}\label{update}
  L_f^{t,0}\tau^{k_1} - \frac{ L_f}{2}\ge \b{L}\tau^{k_1} - \frac{ L_f}{2}>\frac{c}{2}{\rm \ and \ }
 (L_g^{t,0})_i\tau^{k_1} \ge \b{L}\tau^{k_1} \ge L_{g_i} \ {\rm for}\ i = 1,\ldots,m.
 \end{equation}
Note that for each $i$, since $g_i$ has Lipschitz gradient with Lipschitz modulus $L_{g_i}$, we have for any $(\widetilde L_{g})_i > 0$ that
 \begin{equation*}
 \begin{split}
   &g_i(\widetilde x)\le g_i(x^t) + \langle\nabla g_i(x^t),\widetilde x - x^t\rangle + \frac{L_{g_i}}{2}\|\widetilde x - x^t\|^2\\
   & = \bar G(\widetilde x,x^t,\widetilde L_g) +  \frac{L_{g_i} - (\widetilde L_{g})_i}{2}\|\widetilde x - x^t\|^2.
 \end{split}
 \end{equation*}
This together with \eqref{update} and the update rule of $\widetilde L_g$ in Step 3b) shows that after at most $k_1$ calls of Step 3b), we have $g(\widetilde x)\le 0$.
Whenever $\widetilde x$ satisfies $g(\widetilde x)\le 0$, we can apply \eqref{strongc} with $x$ being $x^t$ to conclude that
 \begin{equation*}
 \begin{split}
 &F(\widetilde x)\!\le\! f(x^t)\!+\! P_1(x^t) \!-\! P_2(x^t)\!+\!\langle\widetilde \lambda,g(x^t) \rangle\!-\! \frac{\widetilde L_{fg}}{2}\|x^t \!-\! \widetilde x\|^2 \!-\!  \bigg[\frac{\widetilde L_f - L_f}{2}\bigg]\|x^t -\widetilde x\|^2 \\
 &\le f(x^t)+ P_1(x^t) - P_2(x^t) - \frac{\langle\widetilde\lambda, \widetilde L_g\rangle}{2}\|x^t - \widetilde x\|^2-  \left[ \widetilde L_f -\frac{L_f}{2}\right]\|x^t -\widetilde x\|^2 \\
 &\le F(x^t) - \left[ \widetilde L_f -\frac{L_f}{2}\right]\|x^t -\widetilde x\|^2,
 \end{split}
 \end{equation*}
where the second inequality holds because $\widetilde\lambda\in\R_+^n$ and $g(x^t)\le 0$; we also used the fact that $\widetilde L_{fg} = \widetilde L_f + \langle\widetilde\lambda,\widetilde L_g\rangle$. Thus, in view of the above two displays, the conditions in Step 3a) must hold when $(\widetilde L_g)_i \ge L_{g_i}$ for all $i$ and \rerevise{$\widetilde L_f \ge \frac{L_f+c}2$}; according to the update rules of $\widetilde L_f$ and $\widetilde L_g$, this happens after at most $k_1$ calls of Step 3b) and $k_1$ calls of Step 3c). Thus, at iteration $t$, the inner loop stops after at most $k_0 := 2k_1$ iterations and outputs an $x^{t+1}$ satisfying $g(x^{t+1})\le 0$ \rerevise{and $F(x^{t+1})\le F(x^t) - \frac{c}2\|x^{t+1}-x^t\|^2$.}

Finally, since $g(x^0)\le 0$ to start with,
by induction, we know that for any $t\ge 0$, the inner loop stops after at most $k_0$ iterations. This together with the fact that $\{(L_f^{t,0},L_{g}^{t,0})\}\subseteq [\b{L},{\rm \bar L}]^{m+1}$ implies that $\{(L_f^t, L_{g}^t)\}$ is bounded.
Therefore, SCP$_{ls}$ is well defined and items (ii), (iii) and (iv) hold. This completes the proof.
\end{proof}

\section{Convergence properties of SCP$_{ls}$}\label{sec3}
\subsection{Convergence analysis in nonconvex settings}
In this section, we analyze SCP$_{ls}$ when $F$ in \eqref{Or} is possibly nonconvex.
We first prove some basic properties of the sequence generated by SCP$_{ls}$. Item (iii) in the following theorem was already proved in \cite[Theorem~3.7]{Lu10}; we also include its proof here for the ease of the readers.
\begin{theorem}\label{bastation}
Consider \eqref{Or} and suppose that Assumptions~\ref{basebase} and \ref{baseass} hold. Let $\{(x^t,L_{g}^t)\}$ be generated by SCP$_{ls}$. Then the following statements  hold:
  \begin{enumerate}[{\rm(i)}]
    \item The sequence $\{x^t\}$ is bounded.
    \item The sequence $\{\bar F(x^{t+1},x^{t},L_{g}^{t})\}$ is nonincreasing and convergent to some real number $\bar F^*$, where $\bar F$ is defined as in \eqref{barF}. Moreover, for any $t\ge 1$, we have
        \begin{align}\label{boundedblow}
        \bar F(x^{t+1},x^{t},L_g^t) \le \bar F(x^{t},x^{t - 1},L_{g}^{t-1}) - \frac{c}{2}\|x^{t+1} - x^t\|^2.
        \end{align}
    \item It holds that $\lim\limits_{t\to\infty}\|x^{t+1} - x^t\| = 0$.
  \end{enumerate}
\end{theorem}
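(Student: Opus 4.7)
The three items will be handled in sequence, with the whole argument riding on the monotone descent inequality \eqref{decrease} that is enforced by Step~3a of SCP$_{ls}$ and on the observation that $\bar F$ reduces to $F$ along the generated trajectory.

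First, for item~(i), I will show that \eqref{decrease} holds at every iteration: this is immediate because Step~3a is the termination test of the inner loop, and Lemma~\ref{rdayu0}(i) guarantees that this test is eventually passed at every outer iteration $t$. Consequently $F(x^{t+1})\le F(x^t)$ for all $t$, so $\{x^t\}\subseteq\{x:\;F(x)\le F(x^0)\}$. Since $F$ is level-bounded by Assumption~\ref{basebase}(iii), boundedness of $\{x^t\}$ follows at once.

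Next, for item~(ii), the key observation is that $x^{t+1}$ is precisely the $\widetilde x$ obtained at iteration~$t$, hence $\bar G(x^{t+1},x^t,L_g^t)\le 0$; combined with Step~3a of SCP$_{ls}$ (which checks $g(x^{t+1})\le 0$), this gives
\[
\bar F(x^{t+1},x^t,L_g^t) = f(x^{t+1}) + P_1(x^{t+1}) - P_2(x^{t+1}) = F(x^{t+1}),
\]
and analogously $\bar F(x^t,x^{t-1},L_g^{t-1})=F(x^t)$ for $t\ge 1$. Thus the desired inequality \eqref{boundedblow} is just a restatement of \eqref{decrease}. Since $\inf F>-\infty$ (noted right after Assumption~\ref{basebase}), the nonincreasing sequence $\{F(x^{t+1})\}=\{\bar F(x^{t+1},x^t,L_g^t)\}$ is bounded below and therefore converges to some $\bar F^*\in\mathbb{R}$.

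Finally, for item~(iii), summing \eqref{boundedblow} telescopically from $t=1$ to $T$ yields
\[
\frac{c}{2}\sum_{t=1}^{T}\|x^{t+1}-x^t\|^2 \le \bar F(x^1,x^0,L_g^0) - \bar F(x^{T+1},x^T,L_g^T) \le F(x^1) - \inf F,
\]
and letting $T\to\infty$ gives $\sum_{t\ge 1}\|x^{t+1}-x^t\|^2<\infty$, whence $\|x^{t+1}-x^t\|\to 0$. No serious obstacle is anticipated here; the only subtlety is the bookkeeping that identifies $\bar F$ with $F$ along the iterates, which hinges on the feasibility conditions enforced in Step~3a and the definition of $\bar G$ in \eqref{barG}.
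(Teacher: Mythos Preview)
Your proposal is correct and follows essentially the same approach as the paper: both arguments reduce to observing that $\bar F(x^{t+1},x^t,L_g^t)=F(x^{t+1})$ (since $x^{t+1}$ is feasible for both $g(\cdot)\le 0$ and the subproblem constraint), so that \eqref{boundedblow} is just a rewriting of the line-search descent \eqref{decrease}, and then invoking level-boundedness for (i), lower-boundedness of $F$ for convergence in (ii), and telescoping for (iii). The only cosmetic difference is that the paper records the telescoping sum already in the proof of (i) and reuses it for (iii), whereas you telescope only at the end; the content is identical.
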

\begin{proof}
 Let $F$ be defined as in \eqref{Or}.  Then for any $t\ge0$, we have
 \begin{align}\label{levelbound}
 F(x^{t+1}) - F(x^0)=  \sum_{i=0}^t[F(x^{i+1}) - F(x^i)]\le - \sum_{i=0}^t\frac{c}{2}\|x^{i+1} - x^i\|^2\le 0,
 \end{align}
 where the first inequality follows from \eqref{decrease}.  Since $F$ is level-bounded by Assumption~\ref{basebase}(iii), we deduce that $\{x^t\}$ is bounded and the conclusion in item (i) holds.

 We now prove (ii). Since for any $t\ge0$, the $x^{t+1}$ belongs to ${\rm dom}\,F$ and is feasible for \eqref{subp} with $(\widetilde L_f,\widetilde L_g) = (L^t_f,L^t_g)$, it holds that
 \begin{align}\label{eeeq}
 \bar F(x^{t+1},x^{t},L_g^t)= F(x^{t+1})\ \ {\rm for \ }t\ge 0.
 \end{align}
 This together with \eqref{decrease} shows that
$\{\bar F(x^{t+1},x^t,L_{g}^t)\}$ is nonincreasing and \eqref{boundedblow} holds for all $t\ge 1$. Also, thanks to \eqref{eeeq} and Assumption \ref{basebase}, we have
  \[
  \inf_t \bar F(x^{t+1},x^{t},L_g^t) = \inf_t F(x^t)\ge \inf F>- \infty,
  \]
  implying  that $\{\bar F(x^{t+1},x^{t},L_{g}^{t})\}$ is   bounded from below. Thus, we conclude that the sequence $\{\bar F(x^{t+1},x^{t},L_{g}^{t})\}$ is convergent. We denote this limit by $\bar F^*$.

Finally, we prove (iii). Since $\{\bar  F(x^{t+1},x^{t},L_{g}^{t})\}$ converges to $\bar F^*$, passing to the limit as $t$ goes to infinity in \eqref{levelbound} and invoking \eqref{eeeq}, we have
\[
 \sum_{i=0}^{\infty}\frac{c}{2}\|x^{i+1} - x^i\|^2\le F(x^0) -  \lim_{t\to\infty} \bar F(x^{t+1},x^t,L_g^t)  = F(x^0) -  \bar F^*<\infty.
\]
Therefore, item (iii) holds. This completes the proof.
\end{proof}

Next, we show that $\{\lambda^t\}$ with each $\lambda^t$ being a Lagrange multiplier\footnote{The existence of $\lambda^t$ follows from Lemma~\ref{rdayu0}(iv).} of \eqref{subp} with $(\widetilde L_f,\widetilde L_g) = (L^t_f,L^t_g)$ is bounded and
any cluster point of the sequence $\{x^t\}$ generated by SCP$_{ls}$ is a stationary point of \eqref{Or} in the sense of Definition~\ref{def:stat}. The latter conclusion was also proved in \cite[Theorem~3.7]{Lu10}. We include its proof for completeness.
\begin{theorem}\label{boundedlambda}
Consider \eqref{Or} and suppose that Assumptions~\ref{basebase} and \ref{baseass} hold.  Let $\{x^t\}$  be the  sequence generated by SCP$_{ls}$ and $\lambda^t$ be a Lagrange multiplier of \eqref{subp} with $(\widetilde L_f,\widetilde L_g) = (L^t_f,L^t_g)$. Then
  the sequence $\{\lambda^t\}$ is bounded and any accumulation point of $\{x^t\}$ is a stationary point of \eqref{Or}.
\end{theorem}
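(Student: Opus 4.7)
The plan is to prove the two assertions in sequence, using the first-order and complementarity data produced by Lemma~\ref{rdayu0}(iv) together with the boundedness/asymptotic properties established in Theorem~\ref{bastation} and Lemma~\ref{rdayu0}(ii).

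For boundedness of $\{\lambda^t\}$, I would argue by contradiction. Suppose $\|\lambda^{t_k}\|\to\infty$ along some subsequence. Since $\{x^t\}$ is bounded (Theorem~\ref{bastation}(i)), after a further extraction I may assume $x^{t_k}\to\bar x$; by Theorem~\ref{bastation}(iii), also $x^{t_k+1}\to\bar x$. Setting $\mu^k := \lambda^{t_k}/\|\lambda^{t_k}\|$, a further subsequence gives $\mu^k\to\bar\mu$ with $\bar\mu\in\R_+^m$ and $\|\bar\mu\|=1$. Dividing \eqref{lamg0} by $\|\lambda^{t_k}\|$ and letting $k\to\infty$ (using boundedness of $\{L_g^t\}$ from Lemma~\ref{rdayu0}(ii) and $x^{t_k+1}-x^{t_k}\to 0$) yields $\bar\mu_i g_i(\bar x)=0$ for all $i$, so $\bar\mu$ is supported on the active set $I(\bar x)$. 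Next I divide \eqref{firstoderofsubp} by $\|\lambda^{t_k}\|$: the terms $\nabla f(x^{t_k})$, $\xi^{t_k}\in\partial P_2(x^{t_k})$, $\widetilde L_{fg}(x^{t_k+1}-x^{t_k})$, and any selection from $\partial P_1(x^{t_k+1})$ are bounded (continuity of $\nabla f$; local boundedness of $\partial P_1,\partial P_2$ for convex continuous functions on the bounded set $\{x^t,x^{t+1}\}$; boundedness of $\{\widetilde L_{fg}^{t}\}$ via Lemma~\ref{rdayu0}(ii) and of $\{\lambda^t\}$-\emph{scaled} data), so they vanish in the limit after normalization, giving $\sum_{i\in I(\bar x)}\bar\mu_i\nabla g_i(\bar x)=0$. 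Pairing with the MFCQ direction $d$ at $\bar x$ from Assumption~\ref{baseass} produces $0=\sum_{i\in I(\bar x)}\bar\mu_i\langle\nabla g_i(\bar x),d\rangle<0$, since every term is nonpositive and at least one $\bar\mu_i$ is strictly positive. This contradiction proves boundedness.

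For the stationarity claim, let $\bar x$ be any accumulation point and extract $x^{t_k}\to\bar x$. By part~(i) and after a further extraction, $\lambda^{t_k}\to\bar\lambda\in\R_+^m$; by Theorem~\ref{bastation}(iii), $x^{t_k+1}\to\bar x$ too. Passing to the limit in \eqref{lamg0} (using boundedness of $\{L_g^t\}$) gives $\bar\lambda_i g_i(\bar x)=0$ for each $i$, and $g(\bar x)\le 0$ by feasibility of the iterates. Next, from \eqref{firstoderofsubp} isolate the $\partial P_1$-selection
\[
\eta^{t_k} := -\nabla f(x^{t_k})+\xi^{t_k}-\widetilde L_{fg}^{t_k}(x^{t_k+1}-x^{t_k})-\sum_{i=1}^m\lambda_i^{t_k}\nabla g_i(x^{t_k})\in\partial P_1(x^{t_k+1}).
\]
Local boundedness of $\partial P_2$ on the bounded set $\{x^{t_k}\}$ lets me, after another extraction, assume $\xi^{t_k}\to\bar\xi$; continuity of $\nabla f$ and $\nabla g_i$, boundedness of $\{\widetilde L_{fg}^{t_k}\}$, and $x^{t_k+1}-x^{t_k}\to 0$ then show $\eta^{t_k}\to\bar\eta$ for an explicit $\bar\eta$. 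Outer semicontinuity of the convex subdifferentials gives $\bar\xi\in\partial P_2(\bar x)$ and $\bar\eta\in\partial P_1(\bar x)$. Assembling the limit of \eqref{firstoderofsubp} yields
\[
0=\nabla f(\bar x)+\bar\eta-\bar\xi+\sum_{i=1}^m\bar\lambda_i\nabla g_i(\bar x),
\]
which together with the complementarity and feasibility above is exactly Definition~\ref{def:stat} for $(\bar x,\bar\lambda)$.

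The main obstacle is the boundedness argument for $\{\lambda^t\}$: one must carefully pair the MFCQ direction with a normalized limit of the multipliers, while keeping track that the $\widetilde L_{fg}^{t_k}(x^{t_k+1}-x^{t_k})$ and $\partial P_1$-selection terms scale correctly after dividing by $\|\lambda^{t_k}\|$. The remainder is a standard outer-semicontinuity passage-to-the-limit, made possible by having $x^{t+1}-x^t\to 0$ so that any subsequential limit of $\{x^{t_k+1}\}$ agrees with that of $\{x^{t_k}\}$.
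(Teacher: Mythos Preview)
Your proposal is correct and follows essentially the same route as the paper: a normalized-multiplier contradiction for boundedness (using \eqref{lamg0} to restrict support to $I(\bar x)$ and \eqref{firstoderofsubp} to obtain $\sum_{i\in I(\bar x)}\bar\mu_i\nabla g_i(\bar x)=0$, which MFCQ forbids), then outer semicontinuity of $\partial P_1,\partial P_2$ plus $x^{t+1}-x^t\to 0$ to pass to the limit for stationarity. One imprecision to fix: during the contradiction step you invoke ``boundedness of $\{\widetilde L_{fg}^t\}$ via Lemma~\ref{rdayu0}(ii)'', but $\widetilde L_{fg}^t=L_f^t+\langle\lambda^t,L_g^t\rangle$ depends on $\lambda^t$, which you are precisely assuming unbounded; what you need (and your parenthetical ``$\{\lambda^t\}$-scaled data'' suggests you see this) is that $\widetilde L_{fg}^{t_k}/\|\lambda^{t_k}\|$ is bounded, which does follow from Lemma~\ref{rdayu0}(ii) and $\|\mu^k\|=1$, after which multiplying by $x^{t_k+1}-x^{t_k}\to 0$ kills the term. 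The paper sidesteps this by collecting all $\lambda^t$-weighted terms on one side of \eqref{firstoderofsubp} and bounding them via the remaining manifestly bounded terms before normalizing.
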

\begin{proof}
      Suppose to the contrary that $\{\lambda^t\}$ is unbounded and let $\{\lambda^{t_j}\}$ be a subsequence of $\{\lambda^t\}$ such that $\|\lambda^{t_j}\|\stackrel{j}{\to}\infty$. Passing to a further subsequence if necessary, we may assume that there exist $\lambda^*\in \R^m_+$ and $x^*$ such that  $\lim\limits_{j\to\infty}\frac{\lambda^{t_j}}{\|\lambda^{t_j}\|} = \lambda^*$ and $\lim\limits_{j\to\infty}x^{t_j}=x^*$, where the existence of $x^*$ is due to Theorem~\ref{bastation}(i).

       Using \eqref{firstoderofsubp}, the definition of $\widetilde L_{fg}$ there and the fact $(\widetilde L_f,\widetilde L_g) = (L^t_f,L^t_g)$, we have
   \begin{align*}
   \eta^t\!:=\! \sum_{i=1}^m\lambda_i^t\left[\nabla g_i(x^t)\! +\! (L_{g}^t)_i(x^{t+1} - x^t)\right]\! \in \!- \nabla f(x^t) - L_f^t (x^{t+1} - x^t)\! - \partial P_1(x^{t+1}) + \xi^t.
   \end{align*}
 Since the functions $\nabla f$, $P_1$ and $P_2$ are continuous, and  $\{(x^t,L_f^t)\}$ is bounded thanks to Theorem~\ref{bastation}(i) and Lemma~\ref{rdayu0}(ii), we deduce from the above display that $\{\eta^t\}$ is bounded. Then, dividing $\eta^{t_j}$  by $\|\lambda^{t_j}\|$ and letting $j\to \infty$, using the continuity of $\nabla g$ and  Theorem~\ref{bastation}(iii) together with Lemma~\ref{rdayu0}(ii), we deduce further that
   \begin{align}\label{mfcqcontr}
   \sum_{i=1}^m\lambda^*_i\nabla g_i(x^*) =0.
   \end{align}
   On the other hand, using \eqref{lamg0} with $(\widetilde x,\widetilde \lambda,\widetilde L_g)= (x^{t+1},\lambda^t,L_g^t)$,  the continuity of $\nabla g_i$ for each $i$, Lemma~\ref{rdayu0}(ii) and Theorem~\ref{bastation}(iii), we see that $\lambda_i^*g_i(x^*) = 0$ for all $i=1,\dots,m$.
   This further implies that
   \[
   \lambda^*_i=0 {\rm\ for\ } i\not\in I(x^*).
   \]
The above display and \eqref{mfcqcontr} imply that
\[
\sum_{i\in I(x^*)}\lambda^*_i\nabla g_i(x^*) =0.
\]
Combining this with MFCQ (Assumption~\ref{baseass}) and recalling that $\lambda^*\in \R^m_+$, we conclude that $\lambda^*_i = 0$ for $i\in I( x^*)$. Therefore, we have $\lambda^*=0$, contradicting the fact that $\|\lambda^*\|=1$. Thus, the sequence $\{\lambda^t\}$ is bounded.

For the second conclusion of this theorem, let $\bar x$ be an accumulation point of $\{x^t\}$ with $\lim\limits_{k\to\infty} x^{t_k} = \bar x$. Since $\{\lambda^t\}$ is bounded, passing to a further subsequence if necessary, we assume without loss of generality that $\lim\limits_{k\to\infty}\lambda^{t_k}=\bar \lambda$ for some $\bar\lambda$.
Since  the sequence $\{(L_f^{t},L_g^t,\lambda^t)\}$ is bounded thanks to  Lemma~\ref{rdayu0}(ii) and the boundedness of $\{\lambda^t\}$, using Theorem~\ref{bastation}(iii), we have that $\lim\limits_{k\to\infty}\left(L_f^{t_k} + \langle\lambda^{t_k},L_g^{t_k}\rangle\right)(x^{t_k+1} - x^{t_k}) = 0$. Using this fact together with the  closedness  of $\partial P_1$ and $\partial P_2$, the Lipschitz continuity of $\nabla f$ and $\nabla g$ and Theorem \ref{bastation}(iii), we have
upon passing to the limit as $k$ goes to infinity in \eqref{firstoderofsubp} with $(\widetilde x,\widetilde \lambda,\widetilde L_f,\widetilde L_g) = (x^{t_k+1},\lambda^{t_k},L_f^{t_k},L_g^{t_k})$ and $t = t_k$ that
\begin{equation}\label{station}
   \begin{split}
  0&\in \nabla f(\bar x)  + \partial P_1(\bar x) - \partial P_2(\bar x) + \sum_{i=1}^m\bar \lambda_i\nabla g_i(\bar x).
  \end{split}
  \end{equation}

  On the other hand, using \eqref{lamg0} with $(\widetilde x,\widetilde \lambda,\widetilde L_g)=(x^{t_k+1}, \lambda^{t_k},L_g^{t_k})$ and $t = t_k$, letting $k\to \infty$, we have upon using the continuity of $\nabla g$, Theorem~\ref{bastation}(iii) and Lemma~\ref{rdayu0}(ii) that
  \begin{align}\label{compl}
  \bar \lambda_i g_i(\bar x)=0 {\rm \ for\ all\ }i=1,\dots,m.
  \end{align}
   Finally, since $\lambda^t\ge 0$ for any $t\ge 0$, we have $\bar \lambda\ge 0 $. Also, since $g_i$ is continuous for each $i$ and $g(x^t)\le 0$ thanks to Step 3a) of SCP$_{ls}$, we have $g(\bar x)\le 0$. These together with \eqref{station} and \eqref{compl} imply that $\bar x$ is a stationary point of \eqref{Or}.
\end{proof}

\begin{lemma}\label{Lemma:barF}
  Consider \eqref{Or} and suppose that Assumptions~\ref{basebase} and \ref{baseass} hold. Let $\{(x^t,L_g^t)\}$ be the sequence generated by SCP$_{ls}$ and let $\Omega$ be the set of accumulation points of the sequence $\{(x^{t+1},x^{t},L_{g}^{t})\}$. Then $\Omega\neq \emptyset$ and $\bar F\equiv\bar F^*$ on $\Omega$, where $\bar F$ is defined as in \eqref{barF} and $\bar F^*$ is given in Theorem~\ref{bastation}(ii).
\end{lemma}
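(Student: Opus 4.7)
The plan is to first observe that $\Omega$ is automatically nonempty, then to take a generic point of $\Omega$, identify its structure via the step-length vanishing property, and finally to match $\bar F$ at that point against the known limit $\bar F^*$ by exploiting the identity $\bar F(x^{t+1},x^t,L_g^t)=F(x^{t+1})$ from \eqref{eeeq}.

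For nonemptiness, I would combine Theorem~\ref{bastation}(i), which gives boundedness of $\{x^t\}$, with Lemma~\ref{rdayu0}(ii), which gives boundedness of $\{L_g^t\}$. Consequently the sequence $\{(x^{t+1},x^t,L_g^t)\}$ lies in a compact set of $\R^n\times\R^n\times[\b L,{\rm \bar L}]^m$, so the Bolzano--Weierstrass theorem yields $\Omega\neq\emptyset$.

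Next, pick any $(x^*,y^*,w^*)\in\Omega$ and a subsequence $(x^{t_k+1},x^{t_k},L_g^{t_k})\to (x^*,y^*,w^*)$. Theorem~\ref{bastation}(iii) gives $\|x^{t_k+1}-x^{t_k}\|\to 0$, so $y^*=x^*$. Each $x^{t_k+1}$ is feasible for the corresponding subproblem, i.e.\ $\bar G(x^{t_k+1},x^{t_k},L_g^{t_k})\le 0$; passing to the limit via continuity of $g$ and $\nabla g$ yields $\bar G(x^*,y^*,w^*)\le 0$. In particular $(x^*,y^*,w^*)\in{\rm dom}\,\bar F$ and, because $y^*=x^*$, the definition \eqref{barG} specializes to
\[
\bar G_i(x^*,x^*,w^*)=g_i(x^*)\le 0,
\]
so $g(x^*)\le 0$ as well.

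Finally, to identify the value, I would use \eqref{eeeq}, which states $\bar F(x^{t+1},x^t,L_g^t)=F(x^{t+1})$ for every $t\ge 0$. By Theorem~\ref{bastation}(ii), the left-hand side converges to $\bar F^*$ along the entire sequence, hence also along the subsequence indexed by $t_k$. On the other hand, since $f$, $P_1$, $P_2$ are continuous and $g(x^*)\le 0$, we have $F(x^{t_k+1})\to f(x^*)+P_1(x^*)-P_2(x^*)$. Combining these, $f(x^*)+P_1(x^*)-P_2(x^*)=\bar F^*$. Because $\bar G(x^*,y^*,w^*)\le 0$, the indicator term in \eqref{barF} vanishes at $(x^*,y^*,w^*)$, so $\bar F(x^*,y^*,w^*)=f(x^*)+P_1(x^*)-P_2(x^*)=\bar F^*$, as required.

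There is no real obstacle here: every ingredient (boundedness of $\{x^t\}$ and $\{L_g^t\}$, $\|x^{t+1}-x^t\|\to 0$, the identity $\bar F(x^{t+1},x^t,L_g^t)=F(x^{t+1})$, and convergence of this quantity to $\bar F^*$) has already been established in Lemma~\ref{rdayu0} and Theorem~\ref{bastation}. The only mild point requiring care is verifying that the argument $(x^*,y^*,w^*)$ lies in the effective domain of $\bar F$ so that $\bar F$ takes its ``finite-part'' value there; this is handled by the limit $y^*=x^*$ together with $g(x^*)\le 0$.
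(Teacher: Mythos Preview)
Your proposal is correct and follows essentially the same approach as the paper: establish $\Omega\neq\emptyset$ from boundedness, verify that any accumulation point lies in ${\rm dom}\,\bar F$ by passing the subproblem feasibility to the limit, and then use the identity $\bar F(x^{t+1},x^t,L_g^t)=F(x^{t+1})$ together with Theorem~\ref{bastation}(ii) to pin down the value as $\bar F^*$. The only difference is that you invoke Theorem~\ref{bastation}(iii) to obtain $y^*=x^*$ and then deduce $g(x^*)\le 0$ from $\bar G(x^*,x^*,w^*)=g(x^*)$, whereas the paper obtains $g(x^\Omega)\le 0$ directly by passing $g(x^{t_j+1})\le 0$ (from Step~3a) to the limit and never needs $y^\Omega=x^\Omega$; your detour is harmless but unnecessary.
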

\begin{proof}
From Theorem~\ref{bastation}(i) and Lemma~\ref{rdayu0}(ii) we know that $\Omega\neq\emptyset$.
Fix any $(x^{\Omega},y^{\Omega},L^{\Omega})\in\Omega$ and let $\{(x^{t_j+1},x^{t_j},L_{g}^{t_j})\}$ be a convergent subsequence with $\lim\limits_{j\to\infty}(x^{t_j+1},x^{t_j},L_{g}^{t_j})=(x^{\Omega},y^{\Omega},L^{\Omega})$.
  Since each $\nabla g_i$ is continuous and $x^{t_j+1}$ belongs to ${\rm dom}\,F$ and is feasible for \eqref{subp} with $t = t_j$ and $(\widetilde L_f,\widetilde L_g) = (L_f^{t_j},L_g^{t_j})$, we have
   \begin{equation}\label{barGzero}
   g(x^\Omega) = \lim_{j\to\infty}g(x^{t_j+1})\le 0,\ \ \ \bar G(x^{\Omega},y^{\Omega},L^{\Omega}) = \lim_{j\to\infty} \bar G(x^{t_j+1},x^{t_j},L_{g}^{t_j})\le 0
   \end{equation}
   and $F(x^{t_j+1}) = \bar F(x^{t_j+1},x^{t_j},L_{g}^{t_j})$ for all $j$. Then, using the continuity of $F$ on its closed domain, we have
  \[
  F(x^\Omega) = \lim_{j\to\infty} F(x^{t_j+1}) = \lim_{j\to \infty}\bar F(x^{t_j+1},x^{t_j},L_{g}^{t_j}) = \bar F^*,
  \]
  where the last equality follows from Theorem~\ref{bastation}(ii).
  Thus, we deduce that
  \[
  \bar F(x^{\Omega},y^{\Omega},L^{\Omega}) = F(x^\Omega) =  \bar F^*,
  \]
  where the first equality follows from \eqref{barGzero}.
  Since $(x^{\Omega},y^{\Omega},L^{\Omega})\in\Omega$ is arbitrary, we conclude that $\bar F \equiv \bar F^*$ on $\Omega$.
\end{proof}

To analyze the global convergence properties of SCP$_{ls}$, we need a bound on the subdifferential of $\bar F$ in \eqref{barF}. To this end, we consider the following additional differentiability assumption on $g_i$.
\begin{assumption}\label{diffofgi}
Each $g_i$ in \eqref{Or} is twice continuously differentiable.
\end{assumption}
\begin{lemma}\label{subofbarF}
Consider \eqref{Or} and suppose that Assumption \ref{diffofgi} holds. Let $(x,y,w)\in \R^n\times \R^n\times \R^{m}$ and assume that $P_2$ is continuously differentiable around $x$. Then
  \begin{equation}\label{partial}
    \partial \bar F(x,y,w)\supseteq \begin{pmatrix}
      \nabla  f(x)  - \nabla P_2(x) + \partial P_1(x) + \sum_{i=1}^m\lambda_i[\nabla g_i(y) + w_i(x - y)]\\
    \sum_{i=1}^m\lambda_i[\nabla^2 g_i(y)(x-y) - w_i(x - y)]\\
    \frac12\|x-y\|^2\lambda
   \end{pmatrix}
    \end{equation}
whenever $\lambda \in N_{-\R^m_+}(\bar G(x,y,w))$, where $\bar F$ and $\bar G$ are defined as in \eqref{barF}.
\end{lemma}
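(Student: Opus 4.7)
The plan is to decompose $\bar F$ into a sum of simpler functions and use forward subdifferential calculus, which only needs the ``$\supseteq$'' direction, so no constraint qualification is required. Write $\bar F = \psi + P_1 + \delta_C$, where $\psi(x,y,w) := f(x) - P_2(x)$ is continuously differentiable in a neighborhood of $(x,y,w)$ (by smoothness of $f$ in Assumption~\ref{basebase}(i) together with the local differentiability hypothesis on $P_2$), the convex continuous function $(x',y',w')\mapsto P_1(x')$ has regular subdifferential $\partial P_1(x)\times\{0\}\times\{0\}$ at $(x,y,w)$ (using $\hat\partial P_1 = \partial P_1$ for convex $P_1$), and $C:=\{(x',y',w'):\bar G(x',y',w')\le 0\}$ is closed with $\bar G$ of class $C^1$ thanks to Assumption~\ref{diffofgi}.

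Applying the forward sum rule $\hat\partial\phi_1+\hat\partial\phi_2\subseteq\hat\partial(\phi_1+\phi_2)$ for regular subdifferentials (which is immediate from the liminf definition) and using $\hat\partial\bar F\subseteq\partial\bar F$, I reduce the claim to showing that for any $\lambda\in N_{-\R^m_+}(\bar G(x,y,w))$ the vector $\nabla\bar G(x,y,w)^{T}\lambda$ lies in $\hat N_C(x,y,w)$. For this I consider the smooth scalar function $z'\mapsto\langle\lambda,\bar G(z')\rangle$. Complementarity from $\lambda\in N_{-\R^m_+}(\bar G(x,y,w))$ gives $\langle\lambda,\bar G(x,y,w)\rangle=0$, while $\bar G(z')\le 0$ together with $\lambda\ge 0$ yields $\langle\lambda,\bar G(z')\rangle\le 0$ for every $z'\in C$; a first-order Taylor expansion of $\langle\lambda,\bar G(\cdot)\rangle$ around $(x,y,w)$ then produces
\[
\bigl\langle\nabla\bar G(x,y,w)^{T}\lambda,\ z'-(x,y,w)\bigr\rangle\le o\bigl(\|z'-(x,y,w)\|\bigr)\quad\text{for }z'\in C,
\]
which is precisely the defining inequality of $\hat N_C(x,y,w)$.

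The remaining step is a direct computation of $\nabla\bar G(x,y,w)^{T}\lambda=\sum_{i=1}^{m}\lambda_i\nabla\bar G_i(x,y,w)$: differentiating $\bar G_i(x,y,w)=g_i(y)+\langle\nabla g_i(y),x-y\rangle+\frac{w_i}{2}\|x-y\|^2$ blockwise gives $\nabla g_i(y)+w_i(x-y)$ in the $x$-block, $\nabla^2 g_i(y)(x-y)-w_i(x-y)$ in the $y$-block (after the two copies of $\nabla g_i(y)$ cancel), and $\frac12\|x-y\|^2 e_i$ in the $w$-block. Assembling these with $\nabla\psi(x,y,w)=(\nabla f(x)-\nabla P_2(x),0,0)$ and $\partial P_1(x)\times\{0\}\times\{0\}$ reproduces exactly the three blocks on the right-hand side of \eqref{partial}. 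The only mildly subtle point is the regular normal-cone inclusion for $\delta_C$; this is where Assumption~\ref{diffofgi} is used, ensuring $\bar G$ is $C^1$ so that a genuine first-order expansion of $\langle\lambda,\bar G(\cdot)\rangle$ is available.
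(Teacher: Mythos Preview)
Your proposal is correct and follows essentially the same approach as the paper: both decompose $\bar F$ as smooth part plus $P_1$ plus indicator, invoke the forward sum rule for regular subdifferentials, and then establish the inclusion $\nabla\bar G(x,y,w)^{T}\lambda\in\hat N_C(x,y,w)$ before computing the block gradients of $\bar G_i$. The only minor difference is that where you give a self-contained Taylor-expansion argument for the regular normal-cone inclusion, the paper simply cites \cite[Theorem~6.14]{RoWe97}; and smoothness of $f$ is already part of the standing assumptions attached to \eqref{Or}, so you need not invoke Assumption~\ref{basebase}(i).
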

\begin{proof}
   We only consider the case where $(x,y,w)\in {\rm dom} \bar F$, since \eqref{partial} holds trivially otherwise.
   Using \cite[Exercise~8.8, Corollary~10.9, Proposition~10.5]{RoWe97}, we have
  \begin{align*}
    &\partial \bar F(x,y,w)\supseteq \hat\partial \bar F(x,y,w) \supseteq \begin{pmatrix}
      \nabla  f(x)  - \nabla P_2(x)  + \hat\partial P_1(x)\\
    0\\
    0
   \end{pmatrix} + \hat \partial \delta_{\bar G(\cdot)\leq 0}(x,y,w)\\
    &\overset{\rm (a)}= \begin{pmatrix}
      \nabla  f(x)  - \nabla P_2(x)   +\partial P_1(x)\\
    0\\
    0
   \end{pmatrix} + \hat N_{\bar G(\cdot)\leq 0}(x,y,w)\\
    &\overset{\rm (b)}\supseteq  \begin{pmatrix}\nabla  f(x)  - \nabla P_2(x) + \partial P_1(x)\\
    0\\
    0
   \end{pmatrix} + \sum_{i=1}^m \lambda_i \begin{pmatrix}
    \nabla g_i(y) + w_i(x - y) \\
    \nabla^2 g_i(y)(x-y) - w_i(x - y) \\
    \frac12\|x-y\|^2e_i
    \end{pmatrix},
    \end{align*}
where (a) uses the convexity of $P_1$ and \cite[Proposition~8.12]{RoWe97}, $e_i\in \R^m$ is the $i^{\rm th}$ standard basis vector and (b) holds for any $ \lambda\in \hat N_{-\R^m_+}(\bar G(x,y,w)) = N_{-\R^m_+}(\bar G(x,y,w))$, thanks to \cite[Theorem~6.14]{RoWe97}.
\end{proof}

We also need the following assumption to derive the desired bound on $\partial \bar F$. This assumption was also used in \cite{WeChPo18} for analyzing the global convergence property of the sequence generated by the \revise{proximal DCA with extrapolation (pDCA$_e$).}
\begin{assumption}\label{p1p2lip}
Each $g_i$ in \eqref{Or} is smooth, and the $P_2$ in \eqref{Or} is  continuously differentiable on an open set $\Gamma$ that contains all stationary points of \eqref{Or}. Moreover, the function  $\nabla P_2$ is  locally Lipschitz continuous on $\Gamma$.
\end{assumption}
Using this assumption and Lemma~\ref{subofbarF}, we can prove the following property of $\partial\bar F$.
\begin{lemma}
 Consider \eqref{Or} and suppose that Assumptions~\ref{basebase}, \ref{baseass},  \ref{diffofgi} and  \ref{p1p2lip}  hold.  Let $\{(x^t,L_g^t)\}$ be the  sequence generated by SCP$_{ls}$ and let $\bar F$ be defined as in \eqref{barF}. Then there exist $\kappa>0$ and $\underline{t}\in\mathbb{N}_+$ such that
  \begin{align}\label{eq:bdsubdiff}
    {\rm dist}(0, \partial\bar F(x^{t+1},x^t,L_{g}^t)) \leq \kappa\|x^{t+1}- x^t\|\ {\rm \ for \ all \ }t>\underline{t}.
  \end{align}
\end{lemma}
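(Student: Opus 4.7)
The plan is to exhibit, for each large $t$, an explicit element of $\partial \bar F(x^{t+1},x^t,L_g^t)$ whose norm is controlled by a constant multiple of $\|x^{t+1}-x^t\|$. To do this, I apply Lemma~\ref{subofbarF} at the point $(x,y,w)=(x^{t+1},x^t,L_g^t)$, with $\lambda=\lambda^t$, where $\lambda^t$ is the Lagrange multiplier provided by Lemma~\ref{rdayu0}(iv). Since $\lambda^t\ge 0$, $\bar G(x^{t+1},x^t,L_g^t)\le 0$ (by feasibility of $x^{t+1}$ in subproblem \eqref{subp}), and the complementarity identity \eqref{lamg0} holds, we have $\lambda^t\in N_{-\R_+^m}(\bar G(x^{t+1},x^t,L_g^t))$, so Lemma~\ref{subofbarF} is applicable provided we also know that $P_2$ is continuously differentiable around $x^{t+1}$.

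The differentiability of $P_2$ at $x^{t+1}$ is the first obstacle, and it is handled by compactness. By Theorem~\ref{bastation}(i), $\{x^t\}$ is bounded, so its accumulation set $\mathcal A$ is nonempty and compact. By Theorem~\ref{boundedlambda}, $\mathcal A$ consists of stationary points of \eqref{Or}, and by Assumption~\ref{p1p2lip} we have $\mathcal A\subseteq\Gamma$ with $\Gamma$ open. Standard compactness then yields an open set $U$ with $\mathcal A\subseteq U\subseteq \overline U\subseteq\Gamma$ and $\overline U$ compact. Since every limit point of $\{x^t\}$ lies in $U$, there exists $\underline t\in\N_+$ such that $x^t\in\overline U$ for all $t\ge\underline t$. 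On $\overline U$, $\nabla P_2$ exists and is Lipschitz continuous with some modulus $L_{P_2}$ by Assumption~\ref{p1p2lip}. Consequently, for all $t>\underline t$, both $x^t$ and $x^{t+1}$ lie in $\overline U$, $\xi^t=\nabla P_2(x^t)$, and $\|\nabla P_2(x^{t+1})-\xi^t\|\le L_{P_2}\|x^{t+1}-x^t\|$.

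Now I combine Lemma~\ref{subofbarF} with the first-order condition \eqref{firstoderofsubp} for the subproblem. From \eqref{firstoderofsubp} at iteration $t$, there exists $\eta^t_1\in\partial P_1(x^{t+1})$ with $\eta^t_1=-\nabla f(x^t)+\xi^t-L^t_{fg}(x^{t+1}-x^t)-\sum_i\lambda_i^t\nabla g_i(x^t)$. Substituting this choice of $\eta^t_1$ into the first block of the element produced by Lemma~\ref{subofbarF}, and using $L^t_{fg}=L^t_f+\langle\lambda^t,L^t_g\rangle$, a direct cancellation gives the first block
\begin{equation*}
v_1^t=\nabla f(x^{t+1})-\nabla f(x^t)-\nabla P_2(x^{t+1})+\xi^t-L^t_f(x^{t+1}-x^t).
\end{equation*}
Using Assumption~\ref{basebase}(i) (Lipschitz $\nabla f$), the bound on $\|\nabla P_2(x^{t+1})-\xi^t\|$ from the previous paragraph, and Lemma~\ref{rdayu0}(ii) (boundedness of $L^t_f$), we conclude $\|v_1^t\|\le C_1\|x^{t+1}-x^t\|$ for all $t>\underline t$, with some constant $C_1$.

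For the remaining two blocks provided by Lemma~\ref{subofbarF}, namely $v_2^t=\sum_i\lambda_i^t[\nabla^2g_i(x^t)(x^{t+1}-x^t)-(L^t_g)_i(x^{t+1}-x^t)]$ and $v_3^t=\tfrac12\|x^{t+1}-x^t\|^2\lambda^t$, the bounds are immediate. Using boundedness of $\{x^t\}$ (Theorem~\ref{bastation}(i)), of $\{L^t_g\}$ (Lemma~\ref{rdayu0}(ii)), of $\{\lambda^t\}$ (Theorem~\ref{boundedlambda}), and the continuity of each $\nabla^2 g_i$ from Assumption~\ref{diffofgi}, we obtain $\|v_2^t\|\le C_2\|x^{t+1}-x^t\|$. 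For $v_3^t$, by Theorem~\ref{bastation}(iii) we have $\|x^{t+1}-x^t\|\to 0$, so (enlarging $\underline t$ if necessary) $\|x^{t+1}-x^t\|\le 1$, whence $\|v_3^t\|\le C_3\|x^{t+1}-x^t\|$ for some $C_3$. Setting $\kappa=C_1+C_2+C_3$ and noting that $(v_1^t,v_2^t,v_3^t)\in\partial\bar F(x^{t+1},x^t,L_g^t)$ yields \eqref{eq:bdsubdiff}. The only genuine subtlety is the compactness argument that places the iterates inside the differentiability region $\Gamma$ of $P_2$; the rest is calculation.
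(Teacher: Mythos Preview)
Your proof is correct and follows essentially the same approach as the paper: you invoke Lemma~\ref{subofbarF} with $\lambda=\lambda^t$, use the optimality condition \eqref{firstoderofsubp} to simplify the first block, and bound all three blocks via the boundedness of $\{x^t\}$, $\{L_f^t,L_g^t\}$, $\{\lambda^t\}$ together with the continuity of $\nabla^2 g_i$ and the local Lipschitz continuity of $\nabla P_2$ obtained through the compactness argument. The only cosmetic difference is that the paper uses an $\epsilon$-neighborhood $\Gamma_\epsilon$ of the accumulation set rather than your sandwich $\mathcal A\subseteq U\subseteq\overline U\subseteq\Gamma$, but the content is identical.
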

\begin{proof}
From Theorem~\ref{bastation}(i), we know that $\{x^t\}$ is bounded. Thus, denoting the set of accumulation points of $\{x^t\}$ as $\Omega_x$, we have that  $\Omega_x$ is compact and $\Omega_x\subseteq \Gamma$ thanks to Theorem~\ref{boundedlambda}, where $\Gamma$ is the open set give in Assumption~\ref{p1p2lip}. Choose an $\epsilon>0$ so that $\Gamma_\epsilon:= \{x:\;{\rm dist}(x,\Omega_x)<\epsilon\}\subseteq \Gamma$ and $\nabla P_2$ is Lipschitz continuous with modulus $L_{P_2}$ on $\Gamma_\epsilon$, which exists thanks to the compactness of $\Omega_x$ and Assumption~\ref{p1p2lip}. Moreover, since $\Omega_x$ is compact, from the definition of cluster points, we see that there exists $t_0\in\mathbb{N}_+$ such that ${\rm dist}(x^t,\Omega_x)<\epsilon$ whenever $t > t_0$. In particular, $P_2$ is continuously differentiable around each $x^t$ whenever $t > t_0$. In addition, thanks to Theorem~\ref{bastation}(iii), we can further choose $\underline{t} > t_0 + 1$ such that for $t>\underline{t}$, we have
\begin{align}\label{lastlast}
\|x^{t+1}- x^t\|^2\leq \|x^{t+1}- x^t\|.
\end{align}

Now, let $\lambda^t$ be a Lagrange multiplier of \eqref{subp} with $(\widetilde L_f,\widetilde L_g) = (L^t_f,L^t_g)$, which exists thanks to Lemma~\ref{rdayu0}(iv). Then it holds that $\lambda^t\in N_{-\R^m_+}(\bar G(x^{t+1},x^t,L_{g}^t))$. Therefore, using  \eqref{partial}  with  $ \lambda = \lambda^t$ for any $t > \underline{t}$, we have that
\begin{align}\label{distdist}
 \partial \bar F(x^{t+1},x^t,L_{g}^t)\supseteq  \left(
    \begin{array}{c}
     J^t\\
      \sum_{i=1}^m \lambda_i^t\left(\nabla^2 g_i(x^t)(x^{t+1}-x^t) - (L_{g}^t)_i(x^{t+1} - x^t)\right)\\
     \frac12\|x^{t+1} - x^t\|^2\lambda^t
    \end{array}
    \right)
\end{align}
with
$
J^t:=\nabla  f(x^{t+1}) + \partial P_1(x^{t+1}) - \nabla P_2(x^{t+1})
+  \sum_{i=1}^m\lambda_i^t\left(\nabla g_i(x^t) + (L_{g}^t)_i(x^{t+1} - x^t) \right)
$.
For this $J^t$, using \eqref{firstoderofsubp} with $\widetilde x = x^{t+1}$ and recalling the definition of $\xi^t$, we have that
\begin{equation*}
   \begin{split}
     J^t\ni&\nabla  f(x^{t+1}) - \nabla P_2(x^{t+1})
+  \sum_{i=1}^m\lambda_i^t\left(\nabla g_i(x^t) + (L_{g}^t)_i(x^{t+1} - x^t)\right)\\
&\!\!+\left(-\nabla f(x^t) - L^t_f(x^{t+1} - x^t) + \nabla P_2(x^t) - \sum_{i=1}^m\lambda_i^t\left(\nabla g_i(x^t)  + (L_{g}^t)_i(x^{t+1} - x^t)\right)\right)\\
=&\nabla  f(x^{t+1}) -\nabla f(x^t) +\nabla P_2(x^t) - \nabla P_2(x^{t+1}) - L^t_f(x^{t+1} - x^t).
   \end{split}
 \end{equation*}
 Using this together with Cauchy-Schwarz inequality, for $t>\underline{t}$, it holds that
\begin{equation}\label{distdist2}
\begin{split}
 & \|J^t\|^2 \!\!\leq\! 3\bigg(\!\!\|\nabla f(x^{t+1}) \!-\!\nabla f(x^t)\|^2 \!\!+\! \|\nabla P_2(x^{t+1}) \!-\! \nabla P_2(x^t)\|^2  \!\!+\! \| L^t_f(x^{t+1} \!-\! x^{t})\|^2\!\!\bigg)\\
   &\stackrel{\rm (a)}{\leq} 3L_f^2\|x^{t+1}- x^t\|^2 + 3L_{P_2}^2\|x^{t+1}- x^t\|^2 + 3(L^t_f)^2\| x^{t+1} - x^{t}\|^2 \\
    & =\bigg(3L_f^2+ 3(L^t_f)^2+3L_{P_2}^2\bigg) \|x^{t+1}- x^t\|^2,
\end{split}
\end{equation}
where (a) makes use of the fact that $t > \underline{t}$ (so that $x^t\in \Gamma_\epsilon$) and the Lipschitz continuity of $\nabla f$ and $\nabla P_2$.

On the other hand, since $\{(x^t,L_{g}^t,\lambda^t)\}$ is bounded thanks to Theorem~\ref{bastation}(i), Lemma~\ref{rdayu0}(ii) and Theorem~\ref{boundedlambda}, using the continuity of $\nabla^2g_i$ for each  $i$, there exists $D_1>0$  such that
\begin{equation}\label{D1}
\begin{split}
 &\bigg\|\sum_{i=1}^m \lambda_i^t\bigg(\nabla^2 g_i(x^t)(x^{t+1}-x^t) - (L_{g}^t)_i(x^{t+1} - x^t)\bigg)\bigg\|^2\\
  &\le\! m\!\sum_{i=1}^m({\lambda_i^t})^2 \|\nabla^2 g_i(x^t)(x^{t+1}-x^t) - (L_{g}^t)_i(x^{t+1} - x^t)\|^2\!\leq\! D_1\|x^{t+1}-x^t\|^2,
 \end{split}
\end{equation}
where the first inequality uses the Cauchy-Schwarz inequality.

Therefore, since $\{(L^t_f,\lambda^t)\}$ is bounded thanks to Lemma~\ref{rdayu0}(ii) and Theorem~\ref{boundedlambda}, combining \eqref{lastlast}, \eqref{distdist}, \eqref{distdist2} and \eqref{D1}, we conclude that there exists $\kappa>0$ such that \eqref{eq:bdsubdiff} holds. This completes the proof.
\end{proof}

Now, if we suppose in addition that  $\bar F$ is a KL function with exponent $\alpha\in [0,1)$, then using the results above and following the analysis in \cite{AtBo09,AtBoReSo10,AtBoSv13,BoSaTe14,LiChPo19,WeChPo18}, we can deduce the convergence of the sequence $\{x^t\}$ generated by SCP$_{ls}$ to a stationary point of \eqref{Or} and estimate its local convergence rate. Specifically, using similar proofs as in \cite{LiChPo19,WeChPo18}, we have the following results. The lines of arguments are standard and we omit its proof for brevity.

\begin{theorem}[\textbf{Convergence rate of SCP$_{ls}$ in nonconvex settings}]\label{bastationKL}
Consider \eqref{Or}.
  Suppose that Assumptions~\ref{basebase}, \ref{baseass},  \ref{diffofgi} and \ref{p1p2lip} hold, and $\bar F$ in \eqref{barF} is \rerevise{a KL function.}
   \rerevise{Let $\{(x^t,L_g^t)\}$ be the sequence generated by SCP$_{ls}$ and let $\Omega$ be the set of accumulation points of the sequence $\{(x^{t+1},x^{t},L_{g}^{t})\}$.} Then $\{x^t\}$ converges to a stationary point $x^*$ of \eqref{Or}. Moreover, \rerevise{if $\bar F$ satisfies the KL property with exponent $\alpha\in[0,1)$ at every point in $\Omega$, then} there exists $\underline{t}\in\mathbb{N}_+$ such that the following statements hold:
  \begin{enumerate}[{\rm (i)}]
    \item If $\alpha=0$, then $\{x^t\}$ converges finitely, i.e., $x^t \equiv x^*$ for $t>\underline{t}$.
    \item If $\alpha\in(0,\frac12]$, then there exist $a_0\in(0,1)$ and $a_1>0$ such that
        \[
         \|x^t - x^*\|\le a_1a_0^{t}\  {\rm \ for \ }t>\underline{t}. 
          \]
    \item If $\alpha\in(\frac12,1)$, then there exists $a_2>0$ such that
        \[
        \|x^t - x^*\|\le a_2t^{-\frac{1-\alpha}{2\alpha - 1}}\ {\rm \ for \ }t>\underline{t}. 
        \]
  \end{enumerate}
\end{theorem}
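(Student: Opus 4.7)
The plan is to follow the standard Kurdyka--{\L}ojasiewicz (KL) analysis framework of \cite{AtBoSv13,BoSaTe14,WeChPo18}, applied to the augmented sequence $z^t := (x^{t+1},x^t,L_g^t)$ and the potential function $\bar F$. Three ingredients are already in place: (a) the sufficient decrease \eqref{boundedblow}; (b) the relative error (subdifferential) bound \eqref{eq:bdsubdiff}; and (c) the fact that on the accumulation set $\Omega$, the function $\bar F$ takes the constant value $\bar F^*$ (Lemma~\ref{Lemma:barF}). Together with the boundedness of $\{z^t\}$ (from Theorem~\ref{bastation}(i) and Lemma~\ref{rdayu0}(ii)) and the step-vanishing $\|x^{t+1}-x^t\|\to 0$ (Theorem~\ref{bastation}(iii)), these are the standard ``three pillars'' that drive KL-based convergence analyses.

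I would first show that $\Omega$ is nonempty and compact and invoke the uniformized KL property (cf.\ \cite[Lemma~6]{BoSaTe14}) on $\Omega$: since $\bar F$ satisfies the KL property at every point of $\Omega$ and $\bar F\equiv \bar F^*$ on $\Omega$, there exist $\epsilon,\eta>0$ and a desingularizer $\varphi$ such that
\[
\varphi'(\bar F(z)-\bar F^*)\,{\rm dist}(0,\partial \bar F(z))\ge 1
\]
for every $z$ satisfying ${\rm dist}(z,\Omega)<\epsilon$ and $\bar F^*<\bar F(z)<\bar F^*+\eta$. Since $\bar F(z^t)\downarrow \bar F^*$ by Theorem~\ref{bastation}(ii) and ${\rm dist}(z^t,\Omega)\to 0$, this inequality is available for all sufficiently large $t$. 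Combining it with \eqref{boundedblow} and \eqref{eq:bdsubdiff} and applying the now-classical telescoping argument yields
\[
\sum_{t=0}^{\infty}\|x^{t+1}-x^t\|<\infty,
\]
so $\{x^t\}$ is Cauchy and converges to some $x^*$; by Theorem~\ref{boundedlambda}, $x^*$ is a stationary point of \eqref{Or}.

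For the rate statements (i)--(iii) I would take $\varphi(\nu)=a_0\nu^{1-\alpha}$ and, through the same three ingredients, derive a tail recursion for $S_t:=\sum_{i\ge t}\|x^{i+1}-x^i\|$ of the form $S_t^{2\alpha}\le C(S_{t-1}-S_t)$ (equivalently $S_t\le C'(S_{t-1}-S_t)^{\frac{1-\alpha}{\alpha}}$ in the range $\alpha\in(0,1)$), while the case $\alpha=0$ is handled separately by a direct contradiction argument using $\varphi'(\nu)=a_0$ and the summability to force $x^t$ to become constant. Since $\|x^t-x^*\|\le S_t$, standard one-step-recursion lemmas (see \cite[Theorem~2]{AtBo09} and \cite[Theorem~4.1]{LiChPo19}) then deliver finite termination, $R$-linear, and sublinear rates in the three regimes, respectively.

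The only point requiring care is ensuring that $z^t$ remains in the uniform KL neighborhood of $\Omega$ until the telescoping closes, which is handled by the usual Lyapunov-type coupling of the step size $\|x^{t+1}-x^t\|$ with the potential decrease $\bar F(z^{t-1})-\bar F(z^t)$ via a square-root bound coming from the KL inequality and \eqref{boundedblow}--\eqref{eq:bdsubdiff}; this is what makes the argument self-contained without a priori knowing the limit. Since this is a well-established template and no new ingredient beyond (a)--(c) is needed, the proof is a faithful adaptation of \cite{LiChPo19,WeChPo18} and is therefore omitted.
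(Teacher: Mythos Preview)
Your proposal is correct and mirrors exactly the approach the paper takes: the paper itself omits the proof, stating that it follows by the standard KL machinery of \cite{AtBo09,AtBoReSo10,AtBoSv13,BoSaTe14,LiChPo19,WeChPo18} applied to the augmented iterates $(x^{t+1},x^t,L_g^t)$ and the potential $\bar F$, using precisely the three ingredients you list (Theorem~\ref{bastation}, Lemma~\ref{Lemma:barF}, and \eqref{eq:bdsubdiff}). One minor simplification: since $\{z^t\}$ is bounded and $\Omega$ is its accumulation set, ${\rm dist}(z^t,\Omega)\to 0$ automatically, so no ``Lyapunov coupling'' is needed to keep $z^t$ in the uniform KL neighborhood.
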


\subsection{Convergence analysis in convex settings}
In this section, we study the convergence properties of SCP$_{ls}$ under the following convex settings:
\begin{assumption}\label{convexassumption}
  Suppose that in \eqref{Or}, $P_2= 0$ and  $\{f,g_1,\dots,g_m\}$ are convex.
\end{assumption}

\revise{Assumption~\ref{convexassumption} was also considered in \cite[Section~3.2.3]{BoChPa19} for analyzing MBA, and in \cite[Section~4]{BoChPa19} for its line search variant Multiprox$_{bt}$ \cite[Eq.~(37)]{BoChPa19}. Here, we would like to point out that the line search criterion in Multiprox$_{bt}$ \cite[Eq.~(37)]{BoChPa19} is different from the criterion \eqref{decrease} used in SCP$_{ls}$. The criterion in Multiprox$_{bt}$ relies on a local majorant of the objective function, while \eqref{decrease} uses the objective function directly, and is originated from SpaRSA; see \cite[Eq.~(22)]{WrNo09}. We will establish global convergence of the whole sequence generated by SCP$_{ls}$ in the above convex settings, under suitable assumptions.}
  Unlike the analysis in the previous subsection, our analysis here is based on KL property of $F$ in \eqref{Or} instead of that of $\bar F$, and we will {\em not} assume $g$ to be twice continuously differentiable (i.e., we do not require Assumption \ref{diffofgi}).
We start with two auxiliary lemmas. The first lemma is an analogue of \cite[Lemma~6]{BoSaTe14} and follows immediately from an application of \cite[Theorem~5]{BoNgPeSu17} and standard compactness argument. We omit the proof for brevity.

\begin{lemma}\label{uniformError}
  Let $f:\R^n\to (-\infty,+\infty]$ be a level-bounded proper closed convex function with $\Lambda := \Argmin f \neq\emptyset$. Let $\underline f := \inf f$.  Suppose that $f$ satisfies the KL property at each point in $\Lambda$ with exponent $\alpha\in[0,1)$. Then there exist $\epsilon>0$, $r_0>0$ and $c_0>0$ such that
  \[
  {\rm dist}(x, \Lambda)\le c_0(f(x) - \underline f)^{1-\alpha}
  \]
  for any $x\in{\rm dom }\partial f$ satisfying ${\rm dist}(x,\Lambda)\le\epsilon$ and $\underline f\le f(x)<\underline f + r_0$.
\end{lemma}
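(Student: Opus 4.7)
The plan is to combine, at each point of $\Lambda$, the pointwise H\"olderian error bound that follows from the convex KL property, with a standard finite cover argument to make the constants uniform over the compact set $\Lambda$.

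First I would note that the level-boundedness of the proper closed convex function $f$ forces $\Lambda = \Argmin f$ to be compact (it coincides with the $\underline f$-level set of $f$). Then, for each fixed $\bar x \in \Lambda$, I would invoke \cite[Theorem~5]{BoNgPeSu17}, which in the convex setting upgrades the KL desingularization inequality at $\bar x$ with exponent $\alpha$ to a local H\"older error bound on $\Lambda$: there exist constants $\epsilon_{\bar x}, r_{\bar x}, c_{\bar x} > 0$ such that
\[
 {\rm dist}(x, \Lambda) \le c_{\bar x}\bigl(f(x) - \underline f\bigr)^{1-\alpha}
\]
for every $x \in B(\bar x, \epsilon_{\bar x}) \cap {\rm dom}\,\partial f$ with $\underline f \le f(x) < \underline f + r_{\bar x}$.

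Second, I would globalize these pointwise bounds by compactness. Since $\Lambda$ is compact, the open cover $\{B(\bar x, \epsilon_{\bar x}/2) : \bar x \in \Lambda\}$ admits a finite subcover indexed by some $\bar x_1, \ldots, \bar x_N \in \Lambda$. I would then define
\[
 \epsilon := \tfrac12 \min_{1\le i\le N} \epsilon_{\bar x_i}, \qquad r_0 := \min_{1\le i\le N} r_{\bar x_i}, \qquad c_0 := \max_{1\le i\le N} c_{\bar x_i}.
\]
For any $x \in {\rm dom}\,\partial f$ with ${\rm dist}(x, \Lambda) \le \epsilon$ and $\underline f \le f(x) < \underline f + r_0$, I would pick a nearest point $\bar x \in \Lambda$ to $x$ and an index $i_0$ with $\bar x \in B(\bar x_{i_0}, \epsilon_{\bar x_{i_0}}/2)$, and conclude via the triangle inequality that $\|x - \bar x_{i_0}\| \le \epsilon + \epsilon_{\bar x_{i_0}}/2 \le \epsilon_{\bar x_{i_0}}$; the pointwise error bound at $\bar x_{i_0}$ then yields the desired inequality with the uniform constant $c_0$.

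The only genuinely non-routine ingredient is the pointwise step, which rests on the known fact \cite[Theorem~5]{BoNgPeSu17} that, for convex functions, a KL inequality with exponent $\alpha$ at a minimizer produces a local H\"older error bound with exponent $1-\alpha$; the rest is a purely topological compactness argument, so I expect no serious obstacle.
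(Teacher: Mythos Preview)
Your proposal is correct and matches the paper's approach exactly: the paper states that the lemma ``follows immediately from an application of \cite[Theorem~5]{BoNgPeSu17} and standard compactness argument'' and omits the details. Your write-up simply fills in those omitted details---the pointwise H\"older error bound from \cite[Theorem~5]{BoNgPeSu17} at each minimizer, followed by a finite subcover of the compact set $\Lambda$ to uniformize the constants---so there is nothing to add.
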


The next lemma is an analogue of Lemma~\ref{Lemma:barF} for $F$ in \eqref{Or}.

\begin{lemma}\label{constantbarForF}
 Consider \eqref{Or} and suppose that Assumptions~\ref{basebase} and \ref{baseass} hold. Let $\{x^t\}$ be the sequence generated by SCP$_{ls}$ for \eqref{Or} and let $\Omega_x$ be the set of accumulation point of $\{x^t\}$. Then the following statements hold:
  \begin{enumerate}[{\rm (i)}]
    \item It holds that $\Omega_x\neq\emptyset$ and $F\equiv\bar F^*$ on $\Omega_x$, where $F$ is defined as in \eqref{Or} and $\bar F^*$ is given in Theorem~\ref{bastation}(ii).
    \item The sequence $\{F(x^t)\}$ is nonincreasing and convergent to $\bar F^*$.
  \end{enumerate}
\end{lemma}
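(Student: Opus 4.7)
The plan is to exploit the ``feasibility identity''
$F(x^{t+1}) = \bar F(x^{t+1},x^t,L_g^t)$ for every $t\ge 0$, which already appears implicitly in the proof of Theorem~\ref{bastation} (see equation~\eqref{eeeq} there). This identity holds because $x^{t+1}$ is feasible for the subproblem \eqref{subp}, so $\bar G(x^{t+1},x^t,L_g^t)\le 0$ and the indicator term in $\bar F$ vanishes. With this one-line reduction, the statement on $\{F(x^t)\}$ transfers directly from what Theorem~\ref{bastation}(ii) already gives for $\{\bar F(x^{t+1},x^t,L_g^t)\}$.

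For item~(ii), first I would use the descent rule \eqref{decrease} from Step~3a (which is enforced at every accepted iterate, by Lemma~\ref{rdayu0}(i)) to conclude that $F(x^{t+1})\le F(x^t)-\tfrac{c}{2}\|x^{t+1}-x^t\|^2$ for all $t\ge 0$, so $\{F(x^t)\}$ is nonincreasing. Combining with the identity $F(x^{t+1})=\bar F(x^{t+1},x^t,L_g^t)$ and passing to the limit as $t\to\infty$, Theorem~\ref{bastation}(ii) yields $F(x^t)\to \bar F^*$.

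For item~(i), nonemptiness of $\Omega_x$ is immediate from the boundedness of $\{x^t\}$ in Theorem~\ref{bastation}(i). Fix an arbitrary $x^\Omega\in\Omega_x$ and pick a subsequence $x^{t_j}\to x^\Omega$. Every iterate satisfies $g(x^{t})\le 0$ (guaranteed by Step~3a), so continuity of $g$ gives $g(x^\Omega)\le 0$, i.e., $x^\Omega\in{\rm dom}\,F$. Since $f$, $P_1$, $P_2$ are all continuous on $\R^n$ (Assumption~\ref{basebase} together with the standing hypotheses on \eqref{Or}), $F$ is continuous on its closed domain $\{x:\,g(x)\le 0\}$. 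Hence $F(x^{t_j})\to F(x^\Omega)$, while by item~(ii), $F(x^{t_j})\to \bar F^*$. Thus $F(x^\Omega)=\bar F^*$, and since $x^\Omega\in\Omega_x$ was arbitrary, $F\equiv \bar F^*$ on $\Omega_x$.

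There is no real obstacle in this argument — everything is bookkeeping that leverages already-established facts (the feasibility identity inside the proof of Theorem~\ref{bastation}, the descent inequality, and the boundedness of $\{x^t\}$). The only subtlety worth flagging is that $F$ in \eqref{Or} is only lower semicontinuous globally, but continuity on the closed feasible set is what we actually need, and that is guaranteed by the continuity assumptions on $f$, $P_1$, $P_2$, and $g$.
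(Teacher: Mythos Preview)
Your proposal is correct and follows essentially the same approach as the paper: both rely on the feasibility identity $F(x^{t+1})=\bar F(x^{t+1},x^t,L_g^t)$ (equation~\eqref{eeeq}/\eqref{FbarF}), the continuity of $F$ on its closed domain, and the results of Theorem~\ref{bastation}. The only cosmetic difference is that you establish item~(ii) first and then use it to prove item~(i), whereas the paper does the reverse order; this makes no substantive difference.
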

\begin{proof}
We note first from Theorem~\ref{bastation}(i) that $\Omega_x\neq\emptyset$. In addition, since $x^t\in {\rm dom}\,F$ and is feasible for \eqref{subp} (with $(t-1,L_f^{t-1},L_g^{t-1})$ in place of $(t,\widetilde L_f,\widetilde L_g)$), we have
\begin{align}\label{FbarF}
  F(x^t)  = f(x^t) + P_1(x^t) - P_2(x^t)= \bar F(x^t,x^{t-1},L_g^{t-1}),\ {\rm for\ all\ }t\ge1.
\end{align}
Fix any $x^*\in \Omega_x$ and let $\lim\limits_{j\to\infty}x^{t_j}=x^*$. Using the continuity of $F$ on its closed domain and \eqref{FbarF}, we see that
  \[
  F(x^*) = \lim_{j\to\infty} f(x^{t_j}) +P_1(x^{t_j}) - P_2(x^{t_j})= \lim_{j\to\infty}\bar F(x^{t_j},x^{{t_j}-1},L_g^{{t_j}-1}) = \bar F^*,
  \]
  where the last equality makes use of Theorem~\ref{bastation}(ii). This proves (i).
  The conclusion in (ii) now follows immediately upon combining the above display and \eqref{FbarF} with Theorem~\ref{bastation}(ii). This completes the proof.
\end{proof}

Now we present our main result in this subsection.
\begin{theorem}[\textbf{Convergence rate of SCP$_{ls}$ in convex settings}]\label{convexKL}
  Consider \eqref{Or} and suppose that Assumptions~\ref{basebase}, \ref{baseass} and \ref{convexassumption} hold. Let $\{x^t\}$ be the sequence generated by SCP$_{ls}$. Then $\{x^t\}$ converges to a minimizer  $x^*$ of \eqref{Or}.
  If in addition $F$ in \eqref{Or} is a KL function with exponent $\alpha\in[0,1)$, then the following statements hold:
  \begin{enumerate}[{\rm(i)}]
    \item If $\alpha\in[0,\frac12]$, then there exist $c_0 >0$, $Q_1\in(0,1)$ and $\underline{t}\in\mathbb{N}_+$, such that
         \[
    \|x^t - x^*\|\le c_0\, Q_1^{t} {\rm \ \ for \ }t>\underline{t}.
  \]
    \item If $\alpha\in(\frac12,1)$, then there exist $c_0 >0$ and $\underline{t}\in \mathbb{N}_+$ such that
     \[
         \|x^t - x^*\|\!\le\! c_0\, t^{ -\frac{1 -\alpha}{2\alpha - 1}} {\rm \ for\ } t>\underline{t}.
        \]
  \end{enumerate}
\end{theorem}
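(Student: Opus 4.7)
The plan is to first prove convergence of $\{x^t\}$ to a minimizer without appealing to KL, then use Lemma~\ref{uniformError} together with a Fej\'er-type inequality to derive rates on ${\rm dist}(x^t,\Lambda)$ and on $h_t:=F(x^t)-\bar F^*$, where $\Lambda$ denotes the solution set of \eqref{Or}, and finally transfer these to $\|x^t-x^*\|$.

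For convergence, I would apply \eqref{strongc} with $P_2=0$ and $x=y^*$ for an arbitrary $y^*\in\Lambda$. The convexity of $f$ and each $g_i$, combined with the feasibility of $y^*$ and the nonnegativity of $\widetilde\lambda^t$, implies $f(x^t)+\langle\nabla f(x^t),y^*-x^t\rangle\le f(y^*)$ and $\sum_i\widetilde\lambda_i^t[g_i(x^t)+\langle\nabla g_i(x^t),y^*-x^t\rangle]\le 0$. Substituting into \eqref{strongc} yields a quasi-Fej\'er inequality $\|x^{t+1}-y^*\|^2\le\|x^t-y^*\|^2-\tfrac{2}{L}h_{t+1}+\varepsilon_t$ for all $y^*\in\Lambda$, where $L$ is a uniform upper bound on $\widetilde L_{fg}^t$ guaranteed by Lemma~\ref{rdayu0}(ii) and Theorem~\ref{boundedlambda}, and $\{\varepsilon_t\}$ is summable thanks to Theorem~\ref{bastation}(iii). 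Combined with boundedness of $\{x^t\}$ (Theorem~\ref{bastation}(i)) and the fact that every cluster point is a minimizer in this convex setting (Theorem~\ref{boundedlambda} plus convexity), a standard Opial-type argument then gives $x^t\to x^*$ for some $x^*\in\Lambda$.

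For the rates, Lemma~\ref{uniformError} applied to the convex, level-bounded, proper closed KL function $F$ provides the H\"olderian error bound $d_t:={\rm dist}(x^t,\Lambda)\le c_0 h_t^{1-\alpha}$ for all large $t$, equivalently $h_t\ge c_0^{-\beta}d_t^{\beta}$ with $\beta:=1/(1-\alpha)$. Specializing the Fej\'er inequality to $y^*=P_\Lambda(x^t)$ produces $d_{t+1}^2\le d_t^2-(2/L)h_{t+1}+\varepsilon_t$, which combined with the error bound yields the recursion
\[
d_{t+1}^2\le d_t^2-\tfrac{2 c_0^{-\beta}}{L}\,d_{t+1}^{\beta}+\varepsilon_t.
\]
For $\alpha\in[0,\tfrac12]$ we have $\beta\le 2$; since $d_{t+1}\to 0$ forces $d_{t+1}^{\beta}\ge d_{t+1}^2$ eventually, the recursion delivers $Q$-linear decay of $d_t$ and hence of $h_t$ (through $h_{t+1}\le\tfrac{L}{2}d_t^2$), which via the sufficient-decrease estimate $\|x^{s+1}-x^s\|\le\sqrt{(2/c)(h_s-h_{s+1})}$ and the triangle bound $\|x^t-x^*\|\le\sum_{s\ge t}\|x^{s+1}-x^s\|$ delivers the linear rate in~(i). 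For $\alpha\in(\tfrac12,1)$ we have $\beta>2$ and a standard sequence-analysis argument applied to the recursion yields $d_t=O(t^{-(1-\alpha)/(2\alpha-1)})$ and $h_t=O(t^{-1/(2\alpha-1)})$.

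The sublinear regime is the main obstacle: summing the naive bound $\|x^{s+1}-x^s\|\le\sqrt{(2/c)h_s}$ against polynomial $h_s$ is too weak to give the claimed rate once $\alpha\ge\tfrac34$. To recover the sharp rate in~(ii), the key additional ingredient is a subgradient-type bound ${\rm dist}(0,\partial F(x^{t+1}))\le\kappa\|x^{t+1}-x^t\|$ extracted from \eqref{firstoderofsubp} by adding $\nabla f(x^{t+1})+\sum_i\widetilde\lambda_i^t\nabla g_i(x^{t+1})$ to both sides and using Lipschitz continuity of $\nabla f,\nabla g_i$ and the boundedness of $\widetilde\lambda^t$ (Theorem~\ref{boundedlambda}). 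The technical difficulty is that the complementarity \eqref{lamg0} holds for the surrogate $\bar G$ rather than for $g(x^{t+1})$; this can be handled by noting that whenever $\widetilde\lambda_i^t>0$, convexity of $g_i$ together with \eqref{lamg0} force $|g_i(x^{t+1})|\le\tfrac{(\widetilde L_g^t)_i}{2}\|x^{t+1}-x^t\|^2$, so $\widetilde\lambda^t$ can be perturbed via MFCQ-based stability into an exact KKT multiplier at $x^{t+1}$ with error $O(\|x^{t+1}-x^t\|)$. Once this subgradient bound is in place, combining the KL inequality for $F$ at $x^*$ with sufficient decrease in the standard manner of \cite{AtBoSv13,BoSaTe14,WeChPo18} yields $\sum_{s\ge t}\|x^{s+1}-x^s\|=O(\varphi(h_{t-1}))=O(t^{-(1-\alpha)/(2\alpha-1)})$, giving the claimed rate in~(ii).
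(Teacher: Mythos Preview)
Your convergence argument and your treatment of case~(i) are essentially the same as the paper's: both derive the Fej\'er-type inequality from \eqref{strongc}, use Lemma~\ref{uniformError} to turn the KL property into an error bound, and push the resulting Q-linear rate on $F(x^t)-\bar F^*$ through the sufficient-decrease inequality \eqref{decrease} to a tail sum.

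The divergence is in case~(ii), and there your plan has a real gap. The step ``$\widetilde\lambda^t$ can be perturbed via MFCQ-based stability into an exact KKT multiplier at $x^{t+1}$ with error $O(\|x^{t+1}-x^t\|)$'' is doing all the work and is not justified. Knowing $|g_i(x^{t+1})|=O(\|x^{t+1}-x^t\|^2)$ whenever $\widetilde\lambda_i^t>0$ does not by itself let you replace $\widetilde\lambda^t$ by some $\mu\in N_{-\R^m_+}(g(x^{t+1}))$ with $\|\sum_i(\widetilde\lambda_i^t-\mu_i)\nabla g_i(x^{t+1})\|=O(\|x^{t+1}-x^t\|)$: if $g_i(x^{t+1})<0$ you must set $\mu_i=0$, and then the contribution $\widetilde\lambda_i^t\nabla g_i(x^{t+1})$ is of order one, not $O(\|x^{t+1}-x^t\|)$. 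To repair this you would need a quantitative multiplier-stability result (a Robinson/Hoffman-type bound for the KKT system under MFCQ), which is nontrivial and not supplied. Without the subgradient bound ${\rm dist}(0,\partial F(x^{t+1}))\le\kappa\|x^{t+1}-x^t\|$, the Attouch--Bolte--Svaiter summation you invoke does not go through.

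The paper avoids this difficulty entirely and never bounds ${\rm dist}(0,\partial F(x^{t+1}))$. Instead it works with the combined quantity $\beta_t:=F(x^t)-\bar F^*+\frac{1}{2(\gamma+\theta)}{\rm dist}^2(x^t,S)$, shows from the Fej\'er inequality and the error bound that $\beta_{t+1}\le\beta_t-c\,\beta_t^{1/(2(1-\alpha))}$, and then invokes a standard scalar recursion (\cite[Lemma~4.1]{BoLiYao14}) to get $\beta_t=O(t^{-2(1-\alpha)/(2\alpha-1)})$. The key point you are missing is that the Fej\'er inequality \eqref{summable}, iterated with the fixed anchor $\bar x=\bar x^t:=P_S(x^t)$, yields directly
\[
\|x^{t+t'}-\bar x^t\|^2\le \|x^t-\bar x^t\|^2+2(\gamma+\theta)\bigl(F(x^t)-F(x^{t+t'})\bigr)
\]
for every $t'\ge0$, so $\|x^t-x^{t+t'}\|^2\le 2\|x^t-\bar x^t\|^2+2\|\bar x^t-x^{t+t'}\|^2\le 8(\gamma+\theta)\beta_t$. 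Sending $t'\to\infty$ gives $\|x^t-x^*\|^2\le 8(\gamma+\theta)\beta_t$, which is exactly the claimed rate. No summing of successive differences, no subgradient bound on $F$, and no multiplier perturbation are needed.
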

\begin{proof}
Let $S:=\Argmin F $ for notational simplicity. Note that $S\neq \emptyset$ thanks to Assumption~\ref{basebase}.
Since $P_2=0$ and $\{f,g_1,\dots,g_m\}$ are convex by Assumption~\ref{convexassumption}, using Theorem~\ref{boundedlambda} and \cite[Theorem~28.3]{Ro70}, we see that
\begin{align}\label{omgexsubS}
\emptyset\neq \Omega_x\subseteq S,
\end{align}
where $\Omega_x$ is as in Lemma~\ref{constantbarForF}.
 This together with Lemma~\ref{constantbarForF} implies that $\bar F^* = \inf F$.

Next, let $\lambda^t$ be a Lagrange multiplier of \eqref{subp} with $(\widetilde L_f,\widetilde L_g) = (L^t_f,L^t_g)$, which exists thanks to Lemma~\ref{rdayu0}(iv).
Since $P_2= 0$ and $g(x^t)\le 0$ for all $t$, for any $\bar x\in S$, using \eqref{strongc} with $x = \bar x$, $\widetilde x = x^{t+1}$, $\widetilde \lambda = \lambda^t$, $\widetilde L_f = L_f^t$ and $\widetilde L_{fg} = L_{fg}^t: = L_f^t + \langle\lambda^t,L_{g}^t\rangle$, we deduce that
\begin{align*}
  &F(x^{t+1})\le  f(x^t) + \left<\nabla f(x^t),\bar x -x^t\right> + P_1(\bar x) + \frac{L_{fg}^t}{2}\|\bar x -x^t\|^2-\frac{L_{fg}^t}{2}\|\bar x - x^{t+1}\|^2   \\
     &\ \ \ \ \ \ \ \ \ \ \ \ \ \ +\sum_{i=1}^m\lambda^{t}_i \left(g_i(x^t) + \langle\nabla g_i(x^t), \bar x-x^t\rangle  \right) - \frac{ L_f^t - L_f}{2}\|x^{t+1}  - x^t\|^2\\
  &\stackrel{{\rm (a)}}{\le}  f(\bar x) + P_1(\bar x) + \frac{L_{fg}^t}{2}\|\bar x -x^t\|^2  - \frac{L_{fg}^t}{2}\|\bar x - x^{t+1}\|^2- \frac{L_f^t - L_f}{2}\|x^{t+1} -x^t\|^2\\
  &\stackrel{\rm (b)}{\leq}\! f(\bar x) + P_1(\bar x) + \frac{L_{fg}^{t}}{2}\|\bar x -x^{t}\|^2 - \frac{L_{fg}^{t}}{2}\|x^{t+1}\! - \bar x\|^2 + \frac{ (L_f - L^{t}_f)_+}{c}( F(x^{t}) - F(x^{t+1}))\\
  &\leq f(\bar x) + P_1(\bar x) + \frac{L_{fg}^{t}}{2}\|\bar x -x^{t}\|^2 - \frac{L_{fg}^{t}}{2}\|x^{t+1}\! - \bar x\|^2 + \frac{ M_0}{c}( F(x^{t}) - F(x^{t+1})),
  \end{align*}
 where (a) holds because $\{f,g_1,\dots,g_m\}$ are convex, and $\lambda^t_i\ge 0$ and $g_i(\bar x)\le 0$ for all $i$, (b) follows from \eqref{decrease}, and the $M_0$ in the last inequality is an upper bound of $\{(L_f - L_f^{t})_+\}$, which exists thanks to Lemma~\ref{rdayu0}(ii). Rearranging terms in the above inequality and noting $\bar F^* = \inf F = f(\bar x) + P_1(\bar x)$ whenever $\bar x\in S$, we have for any $\bar x\in S$ that
  \begin{align*}
    \frac{F(x^{t+1})-\bar F^*}{L_{fg}^t}  \le \frac{1}{2}\|\bar x -x^t\|^2 - \frac{1}{2}\|x^{t+1} - \bar x\|^2+\frac{M_0}{c L_{fg}^{t}}\left( F(x^{t}) - F(x^{t+1})\right).
  \end{align*}
  Let $L_{\max} $ be the upper bound of $\{L_{fg}^t\}$ (which exists according to Lemma~\ref{rdayu0}(ii) and Theorem~\ref{boundedlambda}) and recall that $L^t_{fg}\ge L^t_f \ge \b{L} > 0$ for all $t$, where $\b{L}$ is the one used in Step 2 of SCP$_{ls}$. Then we have from the above display that  for any $\bar x\in S$,
   \begin{align*}
    &\gamma\left(F(x^{t+1})-\bar F^*\right)\le \frac{1}{2}\|\bar x -x^{t}\|^2 - \frac{1}{2}\|x^{t+1} - \bar x\|^2+\theta\left( F(x^{t}) - F(x^{t+1})\right),
  \end{align*}
  where $\gamma := \frac1{L_{\max}}$ and $\theta := \frac{M_0}{c\b{L}}$.
  Rearranging terms in the above inequality, we have
    \begin{align}\label{convergence1}
\left(\gamma+\theta\right)\left(F(x^{t+1}) - \bar F^*\right) \le  \frac{1}{2}\|\bar x -x^t\|^2 -\frac{1}{2}\|x^{t+1} - \bar x\|^2  +\theta\left( F(x^t) - \bar F^*\right).
\end{align}
The inequality above in particular implies that for any $\bar x\in S$,
\begin{equation}\label{summable}
\begin{split}
&\frac{1}{2}\|x^{t+1} - \bar x\|^2 \le  \frac{1}{2}\|\bar x -x^{t}\|^2   +\theta\left( F(x^{t}) - \bar F^*\right) - \left(\gamma+\theta\right)\left(F(x^{t+1}) - \bar F^*\right)\\
&\le \frac{1}{2}\|\bar x -x^t\|^2   +(\gamma +\theta)\left( F(x^t) - F(x^{t+1})\right),
\end{split}
\end{equation}
where the last inequality holds because $\bar F^* = \inf F\le F(x^t)$.
Since $\{F(x^{t}) - F(x^{t+1})\}$ is nonnegative and summable thanks to Lemma~\ref{constantbarForF}(ii), using \eqref{omgexsubS}, \eqref{summable} and \cite[Proposition~1]{Iu03}, we conclude that $\{x^t\}$ converges to a minimizer $x^*$ of \eqref{Or}.

Now, we suppose in addition that $F$ is a KL function with exponent $\alpha\in [0,1)$. Let $\bar{x}^t\in S$ satisfy $\|x^t - \bar{x}^t\|={\rm dist}(x^t,S)$.
\revise{Since $\bar x^t\in S$, it holds  that $-\| x^{t+1} - \bar x^{t} \|^2 \le -{\rm dist}^2(x^{t+1},S)$.  Using this and applying \eqref{convergence1} with $\bar{x}^t$ in place of $\bar x$ gives}
\begin{align}\label{fbeta}
\left(\gamma+\theta\right)\left(F(x^{t+1}) - \bar F^*\right)&\le \frac{1}{2}{\rm dist}^2(x^t,S)   -\frac{1}{2}{\rm dist}^2(x^{t+1},S) +\theta\left( F(x^t) - \bar F^*\right).
\end{align}
For notational simplicity, let
\begin{align}\label{betak}
\beta_t := F(x^t) - \bar F^* + \frac{1}{2(\gamma + \theta)}{\rm dist}^2(x^t,S).
\end{align}
Using this, rearranging terms and dividing $\gamma+\theta$ from both sides of \eqref{fbeta}, we have
  \begin{equation}\label{convexss1}
   \beta_{t+1}\le    \frac{\theta}{\gamma + \theta}\left( F(x^t) - \bar F^*\right)+\frac{1}{2(\gamma + \theta)}{\rm dist}^2(x^t,S).
  \end{equation}

 Since $F$ is a proper closed convex level-bounded KL function with exponent $\alpha\in [0,1)$, using Lemma~\ref{uniformError}, there exist $0<\bar a<1$, $\bar c>0$ and $0<\epsilon<1$ such that
  \begin{align}\label{KL1}
    {\rm dist}(x,S)^{\frac{1}{1-\alpha}}\le\bar c\left( F(x) - \bar F^*\right)
  \end{align}
  for any $x\in{\rm dom}\partial F$ satisfying ${\rm dist}(x,S)\le \epsilon$ and $\bar F^*\le F(x)<\bar F^* +\bar a$.

Clearly, $\{x^t\} \subset {\rm dom}\partial F = \{x:\; g(x)\le 0\}$.
Next, since $\{x^t\}$ is bounded thanks to Theorem~\ref{bastation}(i), using \eqref{omgexsubS}, there exists $t_1$ such that
 \begin{align}\label{KL2}
 {\rm dist}(x^{t},S) \le{\rm dist }(x^{t},\Omega_x)<\epsilon,\ {\rm for\ }t>t_1.
 \end{align}
 On the other hand, using Lemma~\ref{constantbarForF}(ii), we see that there exists $t_2$ such that
 \begin{align}\label{KL3}
\bar F^*\le F(x^t) <\bar F^*+ \bar a, {\rm\ for \ }t>t_2.
 \end{align}

We now consider the cases when $\alpha\in [0,\frac12]$ and $\alpha\in (\frac12,1)$ separately.
\paragraph{\bf Case (i)} $\alpha \in [0,\frac12]$. Combining \eqref{KL1}, \eqref{KL2} and \eqref{KL3}, we conclude that for any $t>t_3:=\max\{t_1,t_2\}$,
  \begin{equation}\label{KL4}
  \begin{split}
 {\rm dist}^2(x^t,S) &\le {\rm dist}^{\frac{1}{1-\alpha}}(x^t,S)\le \bar c\left(F(x^t) - \bar F^*\right),
   \end{split}
   \end{equation}
    where the first inequality holds because $\frac{1}{1-\alpha}\le2$ and ${\rm dist}(x^t,S)<\epsilon<1$. Next,
    let $\zeta := \frac{2\theta + \bar c}{2(\gamma+\theta)+\bar c}\in (0,1)$.
Then one can show that
\begin{equation}\label{hahahaha}
\frac{\theta}{\gamma+\theta} + \frac{(1-\zeta)\bar c}{2(\gamma+\theta)} = \zeta.
\end{equation}
Using this and \eqref{convexss1}, we have for all $t > t_3$ that
\[
\begin{aligned}
\beta_{t+1} &\le \frac{\theta}{\gamma+\theta}(F(x^t)-\bar F^*) + \frac{1-\zeta}{2(\gamma+\theta)} {\rm dist}^2(x^t,S) + \frac{\zeta}{2(\gamma+\theta)} {\rm dist}^2(x^t,S) \\
& \overset{\rm (a)}\le \left(\frac{\theta}{\gamma+\theta} + \frac{(1-\zeta)\bar c}{2(\gamma+\theta)} \right)(F(x^t)-\bar F^*) + \frac{\zeta}{2(\gamma+\theta)} {\rm dist}^2(x^t,S) \\
& \overset{\rm (b)}= \zeta\left(F(x^t)-\bar F^* + \frac{1}{2(\gamma+\theta)} {\rm dist}^2(x^t,S)\right) = \zeta\beta_t,
\end{aligned}
\]
where (a) follows from \eqref{KL4} and (b) follows from \eqref{hahahaha}.
Combining the above inequality with the definition of $\beta_t$ in \eqref{betak} gives
\begin{align}\label{induction2}
 F(x^t) - \bar F^*\le \beta_t\le\zeta^{t - t_3 - 1}\beta_{t_3 + 1}\ {\rm \ for \ }t>t_3.
\end{align}
Then, for $t>t_3$, we have
  \begin{align*}
    &\|x^* - x^t\| \le \sum_{j=t+1}^{\infty}\|x^j - x^{j-1}\|\le\sum_{j=t+1}^{\infty}\sqrt{\frac{2}{c}}\sqrt{ F(x^{j-1}) - F(x^{j})}\\
    &\leq\sum_{j=t+1}^{\infty}\sqrt{\frac{2}{c}}\sqrt{ F(x^{j-1}) - \bar F^* }\le \sum_{j=t+1}^{\infty}\sqrt{\frac{2}{c}}\sqrt{\zeta^{j-t_3-2}\beta_{t_3+1}}
    = \sqrt{\frac{2\beta_{t_3+1}}{c\zeta^{t_3+1}}}\frac{(\sqrt{\zeta})^t}{1-\sqrt{\zeta}},
  \end{align*}
   where the second inequality follows from \eqref{decrease}, the third inequality follows from Lemma~\ref{constantbarForF}(ii) and the last inequality follows from \eqref{induction2}. This proves (i).

\paragraph{\bf Case (ii)} $\alpha\in (\frac12,1)$.
Using \eqref{convexss1} and the definition of $\beta_t$ in \eqref{betak}, for any $t>t_3 = \max\{t_1,t_2\}$,  we have
\begin{align*}
&\beta_{t+1} \le \beta_t - \frac{\gamma}{\gamma + \theta}\left(F(x^t) - \bar F^*\right)\\
&=\beta_t - \frac12c_3\left[F(x^t) - \bar F^* + \bar c\left(\frac{1}{2(\gamma + \theta)}\right)^{\frac{1}{2(1- \alpha)}}\left(F(x^t) - \bar F^*\right)\right]\\
&\overset{\rm (a)}\le\beta_t - \frac12c_3\left[F(x^t) - \bar F^* + \left(\frac{1}{2(\gamma + \theta)}\right)^{\frac{1}{2(1- \alpha)}}{\rm dist}(x^t,S)^{\frac{1}{1- \alpha}}\right]\\
&\overset{\rm (b)}\le \beta_t - \frac12c_3\left[\left(F(x^t) - \bar F^*\right)^{\frac{1}{2(1- \alpha)}} + \left(\frac{1}{2(\gamma + \theta)}{\rm dist}^2(x^t,S)\right)^{\frac{1}{2(1- \alpha)}}\right],
\end{align*}
where $c_3 =2\frac{\frac{\gamma}{\gamma + \theta}}{1+\bar c \left(\frac{1}{2(\gamma + \theta)}\right)^{\frac{1}{2(1- \alpha)}}}$, (a) follows from \eqref{KL1}, \eqref{KL2}, \eqref{KL3} and the fact that $\{x^t\}\subset {\rm dom}\partial F=\{x:\; g(x)\le 0\}$, and (b) holds because  $0\le F(x^t) - \bar F^*<\bar a<1$ (thanks to \eqref{KL3}) and $\frac{1}{2(1- \alpha)}>1$. Since the mapping  $w\mapsto w^{\frac{1}{2(1- \alpha)}}$ is convex, for $t>t_3$, we obtain further that
\begin{align*}
&\beta_{t+1} \le \beta_t - c_3c_4\left(F(x^t) - \bar F^* + \frac{1}{2(\gamma + \theta)}{\rm dist}^2(x^t,S)\right)^{\frac{1}{2(1- \alpha)}}\\
&=\beta_t  - c_3c_4\beta_t^{\frac{1}{2(1- \alpha)}}=\beta_t\bigg(1  - c_3c_4\beta_t^{\frac{1}{2(1- \alpha)}-1}\bigg),
\end{align*}
where $c_4:=2^{-{\frac{1}{2(1- \alpha)}}}$.
Since $\frac{1}{2(1- \alpha)}-1 = \frac{2\alpha - 1}{2(1- \alpha)}>0$, using the above inequality and  \cite[Lemma~4.1]{BoLiYao14}, we have
\begin{align}\label{Bolemma41}
  \beta_t\le \bigg(\beta_{t_3+1}^{-\frac{2\alpha-1}{2(1- \alpha)}}\! + \!\frac{2\alpha - 1}{2(1- \alpha)}c_3c_4(t-t_3-1)\bigg)^{-\frac{2(1-\alpha)}{2\alpha-1}} {\rm \ for\ }t>t_3.
\end{align}
Then, for any $t> t_3$ and $t'\ge 0$, we have
  \begin{align*}
  &\|x^{t} -x^{t+t'}\|^2\le 2\left(\|x^{t} - \bar{x}^{t}\|^2 + \|\bar{x}^{t} - x^{t+t'}\|^2\right)\\
  &\stackrel{{\rm (a)}}{\le}   2\left(\|x^t - \bar{x}^t\|^2 + \| \bar{x}^t -x^t\|^2   +2(\gamma +\theta)\left( F(x^t) - F(x^{t+t'})\right)\right)\\
  &= 2\left(2{\rm dist}^2(x^t,S)  +2(\gamma +\theta)\left( F(x^t) - F(x^{t+t'})\right)\right)\\
  &\stackrel{{\rm (b)}}{\le} 2\left(2{\rm dist}^2(x^t,S)  +4(\gamma +\theta)\left( F(x^t) - \bar F^*\right)\right)\\
  &\stackrel{{\rm (c)}}{=}  8(\gamma + \theta)\beta_t
  \le  8(\gamma + \theta)\left(\beta_{t_3+1}^{-\frac{2\alpha-1}{2(1- \alpha)}} + \frac{2\alpha - 1}{2(1- \alpha)}c_3c_4(t-t_3-1)\right)^{-\frac{2(1-\alpha)}{2\alpha-1}},
  \end{align*}
  where (a) follows from \eqref{summable} and the first equality uses the definition of $\bar x^t$ (i.e., the projection of $x^t$ onto $S$), (b) follows from Lemma~\ref{constantbarForF}(ii), (c) uses the definition of $\beta_t$ and the last inequality follows from \eqref{Bolemma41}.
  Letting $t'\to\infty$ and recalling that $x^t \to x^*$, we see that the conclusion in (ii) holds. This completes the proof.
\end{proof}
\begin{remark}
  From the proof of the above theorem, we can actually deduce that the sequence
  $\left\{F(x^t) - \bar F^* + c_0{\rm dist}^2(x^t,S)\right\}$ (with some suitable $c_0 > 0$) is $Q$-linearly convergent when $F$ is a KL function with exponent $\alpha\in[0,\frac12]$, and is sublinearly convergent when  $F$ is a KL function with exponent $\alpha\in(\frac12,1)$; see \eqref{induction2} and \eqref{Bolemma41}.
\end{remark}

\section{KL properties of $\bar F$ and $F$}
In Section 3, we deduced the rate of convergence of the sequence $\{x^t\}$ generated by SCP$_{ls}$ under nonconvex and convex settings by imposing KL assumptions on $\bar F$ in \eqref{barF} and $F$ in \eqref{Or}, respectively; see Theorem~\ref{bastationKL} and Theorem~\ref{convexKL}. \revise{Note that the assumptions in Theorem~\ref{bastationKL} and Theorem~\ref{convexKL} for \eqref{Or} are different as follows:
\begin{itemize}
  \item Assumptions~\ref{basebase}, \ref{baseass},  \ref{diffofgi} and \ref{p1p2lip} are used in Theorem~\ref{bastationKL}.
  \item Assumptions~\ref{basebase}, \ref{baseass},  \ref{convexassumption} are used in Theorem~\ref{convexKL}.
\end{itemize}
Thus, it is interesting to find a relationship between KL exponent of $\bar F$ and that of $F$ when all the above assumptions hold. In this regard, we have the following theorem.}
\begin{theorem}[\textbf{Relation between the KL exponents of $\bar F$ and $F$}]\label{KLbarFKLF}
Let $F$ be defined as in \eqref{Or} and suppose that Assumptions~\ref{basebase}, \ref{baseass}, \ref{diffofgi} and \ref{convexassumption} hold. If $\bar F$ defined in \eqref{barF} is a KL function with exponent $\alpha\in[0,1)$, then $F$ is also a KL function with exponent $\alpha$.
\end{theorem}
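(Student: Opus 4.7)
The plan is to lift the KL inequality for $\bar F$ at a ``diagonal'' point $(\hat x,\hat x,w_0)$ to a KL inequality for $F$ at $\hat x$, exploiting the fact that $\bar F(x,x,w)=F(x)$ for every feasible $x$ and every $w$ (since $\bar G(x,x,w)=g(x)$). First I would fix an arbitrary $\hat x\in{\rm dom}\,\partial F$ and any fixed positive vector $w_0\in\R^m$, and set $\hat z:=(\hat x,\hat x,w_0)$. Then $\bar F(\hat z)=F(\hat x)$, and I would verify $\hat z\in{\rm dom}\,\partial\bar F$ by applying Lemma~\ref{subofbarF} at $(y,x)=(\hat x,\hat x)$: that lemma shows that for any $\lambda\in N_{-\R^m_+}(g(\hat x))$ and any $\xi\in\partial P_1(\hat x)$, the vector $(\nabla f(\hat x)+\xi+\sum_i\lambda_i\nabla g_i(\hat x),\,0,\,0)$ lies in $\partial\bar F(\hat z)$; the MFCQ-based description of $N_{g(\cdot)\le 0}(\hat x)$ combined with $\hat x\in{\rm dom}\,\partial F$ guarantees that such $(\lambda,\xi)$ exist.

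The crux of the argument is a one-sided comparison of subdifferentials along the diagonal. For any feasible $x$ near $\hat x$, Lemma~\ref{subofbarF} evaluated at $(x,x,w_0)$ yields $\partial\bar F(x,x,w_0)\supseteq\bigl(\nabla f(x)+\partial P_1(x)+\sum_i\lambda_i\nabla g_i(x)\bigr)\times\{0\}\times\{0\}$ for every $\lambda\in N_{-\R^m_+}(g(x))$. Under Assumptions~\ref{basebase}, \ref{baseass} and \ref{convexassumption}, the subdifferential calculus recalled around \eqref{sta1} gives $\partial F(x)=\nabla f(x)+\partial P_1(x)+\{\sum_i\lambda_i\nabla g_i(x):\lambda\in N_{-\R^m_+}(g(x))\}$, so every $v\in\partial F(x)$ produces an element $(v,0,0)\in\partial\bar F(x,x,w_0)$ of the same Euclidean norm. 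This yields the key one-sided bound
\[
{\rm dist}(0,\partial\bar F(x,x,w_0))\le{\rm dist}(0,\partial F(x)).
\]

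Finally I would apply the KL property of $\bar F$ at $\hat z$ with exponent $\alpha$ to obtain $a>0$, a neighborhood $U$ of $\hat z$ and a desingularizing function of the form $\varphi(\nu)=a_0\nu^{1-\alpha}$ satisfying \eqref{phichoice}. I would then shrink to a neighborhood $V$ of $\hat x$ so that $(x,x,w_0)\in U$ for every $x\in V$. For any $x\in V$ with $F(\hat x)<F(x)<F(\hat x)+a$, feasibility $g(x)\le 0$ is automatic (else $F(x)=+\infty$), hence $\bar F(x,x,w_0)=F(x)$ and $\bar F(\hat z)<\bar F(x,x,w_0)<\bar F(\hat z)+a$. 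Substituting $z=(x,x,w_0)$ into \eqref{phichoice} for $\bar F$ and applying the subdifferential comparison gives
\[
1\le\varphi'(F(x)-F(\hat x))\,{\rm dist}(0,\partial\bar F(x,x,w_0))\le\varphi'(F(x)-F(\hat x))\,{\rm dist}(0,\partial F(x)),
\]
which is precisely the KL inequality for $F$ at $\hat x$ with the same $\varphi$, and hence the same exponent $\alpha$. The main delicate point will be to verify that the two subdifferential formulas are indexed by a common multiplier set $N_{-\R^m_+}(g(x))$ so that the inequality on subdifferential distances points in the direction we need; this is exactly what MFCQ and the convexity of the $g_i$'s buy us.
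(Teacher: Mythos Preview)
Your proposal is correct and follows essentially the same route as the paper's proof: fix a diagonal point $(\hat x,\hat x,w_0)$, invoke Lemma~\ref{subofbarF} at $y=x$ to obtain the inclusion $\partial\bar F(x,x,w_0)\supseteq(\partial F(x),0,0)$ (with the identification $\partial F(x)=\nabla f(x)+\partial P_1(x)+N_{g(\cdot)\le0}(x)$ coming from MFCQ and convexity), deduce ${\rm dist}(0,\partial\bar F(x,x,w_0))\le{\rm dist}(0,\partial F(x))$, and then pull back the KL inequality of $\bar F$ along the diagonal embedding. The only cosmetic difference is that the paper justifies the \emph{equality} $\partial F(x)=\nabla f(x)+\partial P_1(x)+N_{g(\cdot)\le0}(x)$ via the convex sum rule (\cite[Theorem~23.8]{Ro70}) rather than the one-sided calculus around \eqref{sta1}; you should cite that rule explicitly, since \eqref{sta1} by itself only gives the inclusion $\subseteq$.
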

\begin{proof}
  Fix any $x_0\in {\rm dom} \partial F$ and $w_0\in\R$.
  Using \eqref{partial} and noting that $P_2= 0$ (Assumption~\ref{convexassumption}), we have
  for any $x\in{\rm dom}\partial F$ that
  \begin{equation}\label{subdifsubeq}
  \begin{split}
  &\partial \bar F(x,x,w_0) \\
  &\supseteq\left\{ \begin{pmatrix}
    \nabla f(x) +\partial P_1(x)+ \sum_{i=1}^m\lambda_i\nabla g_i(x)\\
    0\\
    0
  \end{pmatrix}:\, \ \lambda \in N_{-\R^m_+}(\bar G(x,x,w_0))\right\}\\
  & \stackrel{{\rm (a)}}{=} \left\{ \begin{pmatrix}
    \nabla f(x) + \partial P_1(x) + \sum_{i=1}^m\lambda_i\nabla g_i(x)\\
    0\\
    0
  \end{pmatrix}:\ \lambda \in N_{-\R^m_+}(g(x))\right\}\\
  &\stackrel{{\rm (b)}}{=} \begin{pmatrix}
   \nabla f(x) +  \partial P_1(x) +  N_{g(\cdot)\le0}(x)\\
    0\\
    0
  \end{pmatrix} \stackrel{{\rm (c)}}{=} \begin{pmatrix}
   \partial F(x) \\
    0\\
    0
  \end{pmatrix},
  \end{split}
  \end{equation}
 where (a) follows from the fact that  $g(x) = \bar G(x,x,w_0)$, (b) follows from Assumption~\ref{baseass} and \cite[Theorem~6.14]{RoWe97}, and (c) holds due to \cite[Exercise~8.8]{RoWe97} and \cite[Theorem~23.8]{Ro70} together with the  convexity of $P_1$ and  $g$ and the continuity of $P_1$.  Using this together with the assumption that $x_0\in {\rm dom}\partial  F$, we have $ (x_0,x_0,w_0)\in {\rm dom}\partial  \bar F $. Then, from the KL assumption on $\bar F$, we see that there exist $a>0$, $\epsilon>0$ and $c_0>0$ such that
 \begin{align}\label{barFFKL}
  {\rm dist}(0,\partial \bar F(x,y,w))\ge a(\bar F(x,y,w) - \bar F(x_0,x_0,w_0))^\alpha
  \end{align}
  whenever $0<\bar F(x,y,w)-\bar F(x_0,x_0,w_0)< c_0$ and $\|(x,y,w) - (x_0,x_0,w_0)\|\le \epsilon$.

In addition, thanks to the fact that $g(x) = \bar G(x,x,w_0)$,   for any $x\in{\rm dom}\partial F$ satisfying $F(x_0)<F(x)<F(x_0) + c_0$, we have
\begin{align}\label{barFx0}
\bar F(x_0,x_0,w_0)<\bar F(x,x,w_0)<\bar F(x_0,x_0,w_0)+ c_0.
 \end{align}
 On the other hand, for $x$ such that $\|x - x_0\|\le \frac12\epsilon$, we have $\|(x,x,w_0) - (x_0,x_0,w_0)\|\le \epsilon$. Using this and \eqref{barFx0}, for  $x\in{\rm dom}\partial F$ satisfying $\|x - x_0\|\le \frac12\epsilon$ and $F(x_0)<F(x)<F(x_0) + c_0$, we have
\begin{align*}
{\rm dist}(0,\partial F(x)) &\overset{\rm (a)}\ge {\rm dist}(0,\partial\bar F(x,x,w_0))\overset{\rm (b)}\ge a(\bar F(x,x,w_0) - \bar F(x_0,x_0,w_0))^\alpha \\
&\overset{\rm (c)}= a(F(x) - F(x_0)\!)^\alpha,
\end{align*}
  where  (a) follows from \eqref{subdifsubeq}, (b) uses \eqref{barFFKL}  and (c) holds thanks to $g(x) = \bar G(x,x,w_0)$. This completes the proof.
\end{proof}

\subsection{KL exponent for a concrete model}\label{KLofFconvex}
 In this subsection, we study the KL exponent of $F$ in \eqref{Or} with additional assumptions on the functions involved. Specifically, we consider
 the following multiply constrained optimization problem:
\begin{align}\label{convexF}
  \min_{x\in\R^n}  P_1(x) + \delta_{g(\cdot)\le 0}(x),
\end{align}
where  $P_1$ is convex continuous,  the function  $g(x) = (l_1(A_1x),\dots,l_m(A_mx))$ with each $A_i\in\R^{q_i\times n}$ and  $l_i:\R^{q_i}\to\R$ being {\em strictly} convex, and $\{x:\; g(x)\le 0\}\neq \emptyset$. Clearly, \eqref{convexF} is a special case of \eqref{Or} with $f=P_2= 0,  \ g_i(x) = l_i(A_ix), {\rm \ for\  }i=1,\dots,m$ and
\begin{align}\label{convexFF}
F(x) = P_1(x) + \delta_{g(\cdot)\le 0}(x) \revise{= P_1(x) + \sum_{i=1}^m\delta_{l_i(\cdot)\le 0}(A_ix)}.
\end{align}
We will derive rules to deduce the KL exponent of $F$ in \eqref{convexFF} from its Lagrangian. Similar rules were introduced in  \cite{LiPo18} and \cite{YuLiPo19}, which studied the KL exponent of $F$ in \eqref{convexFF} respectively  when $m = 1$ and when the constraint set is defined by equality constraints, under suitable assumptions. Here, we look at \eqref{convexFF} that involves multiple inequality constraints.
\begin{theorem}[{{\bf KL exponent of \eqref{convexFF} from its Lagrangian}}]\label{KLofconvexKL}
 Let $F$ be  as in \eqref{convexFF} and  $\bar x\in \Argmin F $. Suppose the following conditions  hold:
 \begin{enumerate}[{\rm(i)}]
   \item There exists a Lagrange multiplier $\bar \lambda\in\R_+^m$ for \eqref{convexF}
        and   $x\mapsto P_1(x) + \langle\bar \lambda, g(x)\rangle$ is a KL function with exponent $\alpha\in(0,1)$.
   \item  The strict complementarity condition holds at $(\bar x,\bar \lambda)$, i.e., for every $i$ satisfying $\bar \lambda_i=0$, it holds that   $l_i(A_i\bar x) <0$.
 \end{enumerate}
 Then $F$ satisfies the  KL property with exponent $\alpha$ at $\bar x$.
\end{theorem}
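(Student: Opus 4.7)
The plan is to derive the KL property of $F$ at $\bar x$ with exponent $\alpha$ from the KL property of the Lagrangian $L(x) := P_1(x) + \langle \bar\lambda, g(x)\rangle$, using strict complementarity to locally align $F$ with $L$. As a first step, condition (ii) lets me reduce to the situation where $\bar\lambda > 0$ componentwise and $g_i(\bar x) = 0$ for every $i$: for each $i$ with $\bar\lambda_i = 0$, strict complementarity gives $l_i(A_i\bar x) < 0$, so by continuity that constraint is strictly inactive on a neighborhood of $\bar x$ and contributes nothing to $\partial F$; it also drops out of $L$ because $\bar\lambda_i = 0$, and complementary slackness then forces $g_i(\bar x) = 0$ for the remaining indices.

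With this reduction in hand, I will establish two local identities valid for $x$ feasible and close to $\bar x$. First, $F(\bar x) = P_1(\bar x) = L(\bar x)$ and
\[
F(x) - F(\bar x) = (L(x) - L(\bar x)) + \sum_{i=1}^m \bar\lambda_i\bigl(-g_i(x)\bigr),
\]
where both summands are nonnegative (the second by feasibility). Second, every $\zeta \in \partial F(x) = \partial P_1(x) + N_{g(\cdot)\le 0}(x)$ decomposes as $\zeta = v + \sum_i \mu_i \nabla g_i(x)$ with $v \in \partial P_1(x)$, $\mu_i\ge 0$ and $\mu_i g_i(x) = 0$, and the Lagrangian shift $\eta := v + \sum_i \bar\lambda_i \nabla g_i(x) = \zeta + \sum_i(\bar\lambda_i - \mu_i)\nabla g_i(x)$ lies in $\partial L(x)$.

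Armed with these identities, I will feed the KL inequality ${\rm dist}(0,\partial L(x)) \ge c_0(L(x) - L(\bar x))^\alpha$ coming from condition (i) into the decomposition of $\eta$. Together with the H\"olderian error bound for the convex KL function $L$ supplied by Lemma~\ref{uniformError}, convexity of the $g_i$, and the KKT identity $\sum_i\bar\lambda_i\nabla g_i(\bar x) \in -\partial P_1(\bar x)$, I will control both the subdifferential gap $\sum_i(\bar\lambda_i - \mu_i)\nabla g_i(x)$ and the extra value term $\sum_i\bar\lambda_i(-g_i(x))$, so as to conclude ${\rm dist}(0,\partial F(x)) \ge c'(F(x) - F(\bar x))^\alpha$ on a neighborhood of $\bar x$, which is exactly the KL property for $F$ at $\bar x$ with exponent $\alpha$.

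The main obstacle is the simultaneous quantitative control of those two discrepancies. Strict complementarity cleanly eliminates contributions from indices in $I_0$, but the remaining indices in $I_+$ admit two subcases: either $g_i(x) < 0$ (forcing $\mu_i = 0 \neq \bar\lambda_i$) or $g_i(x) = 0$ (in which case $\mu_i$ is free but generally distinct from $\bar\lambda_i$). A Pythagorean-type decomposition of $\eta$ along the constraint gradients $\nabla g_i(x)$, combined with the error bound for $L$, should show that neither the subdifferential gap nor the extra summand $\sum_i \bar\lambda_i(-g_i(x))$ can dominate the KL-controlled quantity $(L(x) - L(\bar x))^\alpha$; matching the exponent $\alpha$ exactly, rather than settling for a weaker exponent such as $\alpha/(1-\alpha)$, is the delicate point, and is where the strict convexity of the $l_i$ enters by providing the curvature of $L$ needed to absorb the extra terms.
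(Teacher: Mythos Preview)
Your proposal has a genuine gap at exactly the point you yourself flag as ``the main obstacle.'' You plan to start from the subdifferential KL inequality $\|\eta\|\ge c_0(L(x)-L(\bar x))^\alpha$ for $\eta\in\partial L(x)$ and transfer it to $\|\zeta\|\ge c'(F(x)-F(\bar x))^\alpha$ via $\zeta=\eta-\sum_i(\bar\lambda_i-\mu_i)\nabla g_i(x)$. But the correction term $\sum_i(\bar\lambda_i-\mu_i)\nabla g_i(x)$ is \emph{not} small: when $g(x)<0$ componentwise (which can happen arbitrarily close to $\bar x$), all $\mu_i$ vanish and the correction is $\sum_i\bar\lambda_i\nabla g_i(x)$, a quantity bounded away from zero in general. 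A triangle inequality therefore gives nothing, and your ``Pythagorean-type decomposition'' is left as a hope rather than an argument. You acknowledge that matching the exponent $\alpha$ exactly is ``the delicate point,'' but the proposal offers no mechanism for doing so; strict convexity of the $l_i$ is invoked but its role is not made precise.

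The paper avoids this entirely by working at the level of error bounds rather than individual subgradients. Using the Bolte--Nguyen--Peypouquet--Suter equivalence (convex KL $\Leftrightarrow$ H\"olderian growth), the KL property of $L=F_{\bar\lambda}$ gives ${\rm dist}(x,\Argmin F_{\bar\lambda})\le\bar c\,(F_{\bar\lambda}(x)-F_{\bar\lambda}(\bar x))^{1-\alpha}$. The key step is then to show that $\Argmin F$ and $\Argmin F_{\bar\lambda}$ agree on a ball around $\bar x$: strict convexity of each $l_i$ forces $A_ix$ to be constant on $\Argmin F_{\bar\lambda}$ for every $i$ with $\bar\lambda_i>0$, so the active constraints are automatically satisfied there, while the inactive ones stay strict near $\bar x$ by continuity. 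Combined with the elementary inequality $F_{\bar\lambda}(x)\le F(x)$ for feasible $x$, this yields ${\rm dist}(x,\Argmin F)\le C(F(x)-F(\bar x))^{1-\alpha}$, and another application of the equivalence converts back to the KL inequality for $F$ with exponent exactly $\alpha$. There is no need to compare $\partial F(x)$ and $\partial L(x)$ pointwise at all; the inequality $F_{\bar\lambda}\le F$ absorbs both of your discrepancy terms in one stroke. I recommend abandoning the subdifferential route and adopting this error-bound argument.
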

\begin{proof}
  Let $ F_{\bar \lambda}(x):=P_1(x) + \langle\bar \lambda, g(x)\rangle$. By the definition of Lagrange multiplier, we have
  \begin{align}\label{equalityff'}
 F(\bar x) =\inf F =P_1(\bar x) = \inf F_{\bar \lambda}\le F_{\bar \lambda}(\bar x)\le F(\bar x),
  \end{align}
  where the second  inequality holds because $g(\bar x)\le 0$ and $\bar \lambda\in \R_+^m$. On the other hand, thanks to (ii), it holds that $\{i:\bar \lambda_i> 0\} = I(\bar x)$. This together with \cite[Theorem~28.1]{Ro70}  gives
  \begin{align}\label{argmin}
  \bar x\in\Argmin F =\bigcap_{i\in I(\bar x)}\!\!\{x\!:\;\! l_i(A_ix)=0\}\cap\bigcap_{i\not\in I(\bar x)}\!\!\{x:\; l_i(A_ix)\le 0\}\cap\Argmin F_{\bar \lambda}.
  \end{align}
  Since $l_i$ is strictly convex and $\bar \lambda_i>0$ for $i\in I(\bar x)$, we see that $A_ix$ is constant over $\Argmin F_{\bar \lambda}$ for each  $i\in I(\bar x)$. This together with the fact that $l_i(A_i\bar x) = 0$ for $i\in I(\bar x)$ and \eqref{argmin}  implies  that
  \begin{align}\label{noball}
  \bar x\in\Argmin F = \bigcap_{i\not\in I(\bar x)}\{x:\; l_i(A_ix)\le 0\}\cap\Argmin F_{\bar \lambda}.
  \end{align}
  Next, since $l_i(A_i\bar x)<0$ for each  $i\not\in  I(\bar x)$, there exists  $\epsilon_0>0$ such that
  \[
 l_i(A_ix)<0, \ \forall x\in B(\bar x,\epsilon_0),\  \forall \ i\not\in I(\bar x).
  \]
  This together with \eqref{noball} implies that
  \begin{align}\label{ballargmin}
  \bar x\in\Argmin F\cap B(\bar x,\epsilon_0) = \Argmin F_{\bar \lambda}\cap B(\bar x,\epsilon_0) .
  \end{align}

 Now, using  (i) and \cite[Theorem~5(i)]{BoNgPeSu17} together with the fact that $\bar x\in\Argmin F_{\bar \lambda}$, we see that there exist $\bar a>0$, $\bar c>0$ and $0<\epsilon<\epsilon_0$ such that
 \begin{align}\label{errorbound}
 {\rm dist}(x,\Argmin F_{\bar \lambda} )\le \bar c(F_{\bar \lambda}(x) - F_{\bar \lambda}(\bar x))^{1-\alpha}
 \end{align}
 whenever $\|x - \bar x\|\le \epsilon$ and $F_{\bar \lambda}(\bar x)\le F_{\bar \lambda}(x)<F_{\bar \lambda}(\bar x) + \bar a$. Note that for any $x$ satisfying $F(\bar x)<F(x)<F(\bar x) + \bar a$,
  we have $l_i(A_ix)\le 0$ for each  $i$ and
  \begin{align}\label{equivv}
 F(\bar x) = F_{\bar \lambda}(\bar x) \le F_{\bar \lambda}(x)\le F(x)< F(\bar x)+\bar a = F_{\bar \lambda}(\bar x) + \bar a,
 \end{align}
  where the first and the last equalities follow  from \eqref{equalityff'} and the second inequality holds because $\bar \lambda_i\ge0$ and $l_i(A_ix)\le 0$ for each  $i=1,\dots,m$.
  Therefore, for any $x$ satisfying $F(\bar x)<F(x)<F(\bar x) + \bar a$ and $\|x -\bar x\|\le \epsilon$,  we have
  \begin{equation*}
  \begin{split}
 &{\rm dist}(x,\Argmin F )\le {\rm dist}(x,\Argmin F \cap B(\bar x,\epsilon_0))\stackrel{{\rm (a)}}{=}{\rm dist}(x,\Argmin F_{\bar \lambda}\cap B(\bar x,\epsilon_0) )\\
 &\stackrel{{\rm (b)}}{\le} 4\max\left\{{\rm dist}(x,\Argmin F_{\bar \lambda}),{\rm dist}(x,B(\bar x,\epsilon_0))\right\} \stackrel{{\rm (c)}}{=} 4{\rm dist}(x,\Argmin F_{\bar \lambda})\\
 &\stackrel{{\rm(d)}}{\le} 4\bar c(F_{\bar \lambda}(x) - F_{\bar \lambda}(\bar x))^{1-\alpha} \le 4\bar c(F(x) - F(\bar x))^{1-\alpha},
 \end{split}
  \end{equation*}
    where (a) follows from \eqref{ballargmin}, (b) follows from \cite[Lemma~4.10]{LiNgPo07}, (c) holds because $\epsilon<\epsilon_0$, (d) follows from \eqref{errorbound} and \eqref{equivv}  and the last inequality holds because of \eqref{equalityff'} (so that $F_{\bar\lambda}(\bar x) = F(\bar x)$), $l_i(A_ix)\le 0$ for each  $i$ and $\bar \lambda\in\R_+^m$. The desired conclusion now follows immediately from this and \cite[Theorem~5(ii)]{BoNgPeSu17}.
\end{proof}

Now, we give a corollary that deals with \eqref{convexF} with $m = 1$. This result is different from \cite[Theorem~3.5]{LiPo18} because, here, it is the constraint function that is a composition of strictly convex function and a linear map, but not the objective function.
\begin{corollary}\label{KLofconvexKLcor}
 Let $F$ be defined as in \eqref{convexFF} with $m=1$.  Suppose the following conditions  hold:
 \begin{enumerate}[{\rm(i)}]
   \item It holds that $\inf P_1<\inf F$.
   \item There exists a Lagrange multiplier\footnote{ \revise{Following \cite[Page~274]{Ro70}, we say that $\bar \lambda$ is a Lagrange multiplier for (4.4) if $\bar\lambda\ge 0$ and  $\inf\limits_{x\in \R^n} \{P_1(x) + \bar\lambda g(x)\} = \inf\limits_{x\in \R^n} \{P_1(x) + \delta_{g(\cdot)\le 0}(x)\} > -\infty$.}} $\bar \lambda\ge0$ for  \eqref{convexF}
          and  $x\mapsto P_1(x) + \bar \lambda l_1(A_1x)$ is a KL function with exponent $\alpha\in(0,1)$.
 \end{enumerate}
 Then $F$ is KL function with exponent $\alpha$.
\end{corollary}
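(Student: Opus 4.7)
The plan is to reduce Corollary~\ref{KLofconvexKLcor} directly to Theorem~\ref{KLofconvexKL} by verifying the latter's hypotheses at every minimizer of $F$, and then to cover the non-minimizing points of ${\rm dom}\,\partial F$ by invoking the convexity of $F$. Note that $F$ is proper closed convex, since $P_1$ is convex continuous and the constraint set $\{x:l_1(A_1x)\le 0\}$ is closed convex by the convexity (hence continuity) of $l_1$.

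First, I would use condition~(i) of the corollary to force $\bar\lambda > 0$. Indeed, if $\bar\lambda = 0$, then the Lagrange-multiplier property (see the footnote) would give $\inf F = \inf_x\{P_1(x) + 0\cdot l_1(A_1x)\} = \inf P_1$, contradicting condition~(i). Next, I would fix an arbitrary $\bar x \in \Argmin F$ and verify the two hypotheses of Theorem~\ref{KLofconvexKL}: its condition~(i) is exactly condition~(ii) of the corollary with the same $\bar\lambda$, and its condition~(ii) (strict complementarity at $(\bar x,\bar\lambda)$) is vacuously satisfied because $m=1$ and $\bar\lambda > 0$ leave the set $\{i:\bar\lambda_i = 0\}$ empty. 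Theorem~\ref{KLofconvexKL} then yields the KL property with exponent $\alpha$ at every $\bar x \in \Argmin F$.

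Finally, for $\bar x \in {\rm dom}\,\partial F$ with $F(\bar x) > \inf F$, the proper closed convexity of $F$ gives $0 \notin \partial F(\bar x)$; set $\delta := {\rm dist}(0,\partial F(\bar x)) > 0$. Using outer semicontinuity of the convex subdifferential, I would argue by contradiction (against some sequence $x_n \to \bar x$ with $F(x_n)\to F(\bar x)$ and $\zeta_n\in\partial F(x_n)$, $\zeta_n\to 0$) that there exist $\epsilon > 0$ and $a > 0$ with ${\rm dist}(0,\partial F(x)) \ge \delta/2$ for every $x \in B(\bar x,\epsilon)\cap{\rm dom}\,\partial F$ satisfying $F(\bar x) < F(x) < F(\bar x) + a$. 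Choosing $\varphi(s) = c_0 s^{1-\alpha}$ with $c_0$ sufficiently large then yields the KL inequality with exponent $\alpha$ at $\bar x$. I expect the main subtlety to lie in this last step: ensuring that the \emph{same} exponent $\alpha$ given by Theorem~\ref{KLofconvexKL} at minimizers can also be used at non-minimizers. This is where the convexity of $F$, together with the standard fact that proper closed convex functions satisfy the KL inequality with \emph{any} exponent at non-critical points, is indispensable.
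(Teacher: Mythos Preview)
Your proposal is correct and follows essentially the same approach as the paper: both use condition~(i) to force $\bar\lambda>0$, verify the hypotheses of Theorem~\ref{KLofconvexKL} at each minimizer (strict complementarity being vacuous since $m=1$), and handle non-minimizing points via convexity. The only cosmetic difference is that the paper dispatches the non-minimizer case by citing \cite[Lemma~2.1]{LiPo18}, whereas you spell out the underlying outer-semicontinuity argument directly.
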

\begin{proof}
Let $F_{\bar \lambda}(x):=P_1(x) + \bar \lambda l_1(A_1x)$.
  In view of \cite[Lemma~2.1]{LiPo18} and the convexity of $F$, it suffices to show that $F$ has KL property at every point in $\{x:\;0\in\partial F(x)\} = \Argmin F$ with exponent $\alpha$. Fix any $\bar x$ with $0\in\partial F(\bar x)$.
  Then one can see from condition (i) and the definition of Lagrange multiplier that $\bar \lambda>0$ and thus  $l_1(A\bar x)= 0$.  Therefore, Assumption (ii) of  Theorem~\ref{KLofconvexKL} is satisfied. This together with  (ii)  and  Theorem~\ref{KLofconvexKL} shows that $F$ satisfies the KL property at $\bar x$ with exponent $\alpha$.
\end{proof}

\begin{remark}\label{ells}
   When $P_1(\cdot) = \|\cdot\|_1$ in \eqref{convexF}, we deduce from \cite[Corollary~5.1]{LiPo18} and Corollary~\ref{KLofconvexKLcor} that the KL exponent of $F$ in \eqref{convexFF} is $\frac12$ if $m = 1$ and $l_1$ takes one of the following forms with $b\in\R^q$ and $\delta>0$ chosen so that the Slater condition holds and the origin is not feasible:
  \begin{enumerate}[{\rm (i)}]
    \item  (Basis pursuit denoising \cite{Ca08}) $l_1(z) = \frac12\|z - b\|^2 - \delta$.
    \item  (Logistic loss \cite{HoLeSt13,KlDiGa02}) $l_1(z) = \sum_{i=1}^q\log(1+ \exp(b_iz_i)) - \delta$ for some $b\in\R^q$.
    \item  (Poisson loss \cite{Fr83,La92,Zo04}) $l_1(z) = \sum_{i=1}^q(-b_iz_i + \exp(z_i)) - \delta$ for some $b\in\R^q$.
  \end{enumerate}
\end{remark}

\section{Applications in compressed sensing}\label{applications}

In this section, we consider  applications of \eqref{Or} and discuss how the various assumptions required in our analysis of SCP$_{ls}$ can be verified.
We focus on the problem of compressed sensing, which attempts to reconstruct sparse signals from possibly noisy low-dimensional measurements; see \cite{CaRaArBaSa16} for a recent review. We specifically look
at the following model:
\begin{equation}\label{lOr}
\begin{array}{rl}
\min\limits_{x}& \|x\|_1 - \mu\|x\|\\
{\rm s.t.}& \ell(Ax - b)\le \delta,
\end{array}
\end{equation}
where $\mu\in[0,1]$, $A\in\R^{q\times n}$ has {\em full row rank}, $b\in\R^q$, $\ell:\R^q\rightarrow \R_+$ is an analytic function whose gradient is Lipschitz continuous with modulus $L_{\ell}$ and satisfies $\ell(0)=0$, and  $\delta\in (0,\ell(-b))$. The $\ell$ in \eqref{lOr} is typically chosen according to different types of noise. We will look at two specific choices in Section~\ref{sec5.1} and Section~\ref{sec5.2}, respectively.

Problem \eqref{lOr} is a special case of \eqref{Or} with $f= 0$, $P_1(x) = \|x\|_1$, $P_2(x) = \mu\|x\|$ and $g(x) = \ell(Ax - b) - \delta$.\footnote{Note that $\{x:\; g(x)\le0\}\neq \emptyset$ because $A$ has full row rank and $\ell(0) = 0 < \delta$.} Then the $F$ from \eqref{Or} corresponding to \eqref{lOr} is
\begin{equation}\label{Felll}
  F(x) = \|x\|_1 - \mu\|x\| + \delta_{\ell(A\cdot - b)\le \delta}(x),
\end{equation}
and the $\bar F$ from \eqref{barF} corresponding to \eqref{lOr} is
 \begin{align}\label{barFell1}
\bar F(x,y,w) =  \|x\|_1 - \mu\|x\| + \delta_{\bar  G(\cdot)\le 0}(x,y,w)
\end{align}
with
\begin{equation}\label{barGell1}
\bar  G(x,y,w) = \ell(Ay-b) + \langle A^T\nabla \ell(Ay-b),x-y\rangle + \frac{w}2\|x-y \|^2 - \delta.
\end{equation}
Our next theorem concerns the KL conditions needed in Theorems~\ref{bastationKL} and \ref{convexKL}.
\begin{theorem}\label{subana}
  Let $F$ and $\bar F$ be defined as in \eqref{Felll} and \eqref{barFell1}, respectively, \rerevise{and let $\Xi\subseteq {\rm dom}\,\partial F$ and $\Upsilon\subseteq {\rm dom}\,\partial \bar F$ be compact sets. Then there exists $\alpha\in [0,1)$ so that $F$ (resp., $\bar F$) satisfies the KL property with exponent $\alpha$ at every point in $\Xi$ (resp., in $\Upsilon$).}
\end{theorem}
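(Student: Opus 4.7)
The plan is to combine the nonsmooth subanalytic {\L}ojasiewicz inequality of Bolte--Daniilidis--Lewis \cite[Theorem~3.1]{BoDaLe07} with a compactness-based uniformization argument on $\Xi$ and $\Upsilon$.

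First I would verify the hypotheses of \cite[Theorem~3.1]{BoDaLe07} for $F$ and $\bar F$ on bounded open neighborhoods of $\Xi$ and $\Upsilon$. The ingredients $\|\cdot\|_1$ and $\mu\|\cdot\|$ are semialgebraic, hence subanalytic. Since $\ell$ is analytic and $A$ is linear, the functions $x\mapsto \ell(Ax-b)-\delta$ and $(x,y,w)\mapsto \bar G(x,y,w)$ are real analytic on the whole ambient space, so their restrictions to any bounded open set are subanalytic and their $0$-sublevel sets there are closed subanalytic sets; the associated indicator functions are therefore (locally) subanalytic. Summing, $F$ and $\bar F$ are proper, closed, and locally subanalytic, with closed domains (by continuity of $\ell$ and $\bar G$) on which they are continuous (since $\|\cdot\|_1-\mu\|\cdot\|$ is continuous).

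Next, I would apply \cite[Theorem~3.1]{BoDaLe07} to $F$ (resp.\ $\bar F$) restricted to a bounded open neighborhood of $\Xi$ (resp.\ $\Upsilon$), yielding that each point of $\mathrm{dom}\,\partial F$ (resp.\ $\mathrm{dom}\,\partial \bar F$) inside that neighborhood is a KL point with some exponent $\alpha(\bar x)\in[0,1)$ that a priori depends on the base point. To promote these pointwise exponents to a single exponent valid at every point of the compact set $\Xi$, I would invoke the fact — characteristic of the subanalytic/analytic category — that the {\L}ojasiewicz exponent can be chosen uniformly on compact subsets of the domain: this is the workhorse behind the uniformization technique of \cite[Lemma~6]{BoSaTe14}. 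Concretely, I would extract a finite subcover of $\Xi$ by KL neighborhoods $V(\bar x^i)$ with exponents $\alpha_i\in[0,1)$, use the uniform form of {\L}ojasiewicz's inequality to extend each $\alpha_i$ from the center $\bar x^i$ to all base points in $V(\bar x^i)\cap\Xi$, and then set $\alpha:=\max_i\alpha_i\in[0,1)$. The identical argument applied to $\bar F$ on $\Upsilon$ gives another exponent; taking the larger of the two furnishes the required $\alpha$.

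The main obstacle is the uniformization in the last step: the KL property at $\bar x$ is a statement about neighborhoods of $\bar x$, so a naive finite-cover argument only controls the exponent at the centers of the cover. Resolving this requires the (standard but delicate) fact that in the subanalytic category the {\L}ojasiewicz exponent is upper semicontinuous on compact pieces of the domain, equivalently that the exponent can be chosen uniformly on compact subanalytic subsets. Once this is in place, the rest of the proof is routine bookkeeping: verification of subanalyticity, closure of the domain, and continuity on the domain follow directly from the analyticity of $\ell$ and the continuity of the semialgebraic summands.
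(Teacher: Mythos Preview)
Your proposal is correct and follows essentially the same route as the paper: establish subanalyticity of $F$ and $\bar F$ (the paper does this at the level of their graphs, you at the level of the functions themselves---equivalent), invoke \cite[Theorem~3.1]{BoDaLe07} for the pointwise KL exponent, and then uniformize over the compact sets. For the compactness step the paper points to the proof of \cite[Lemma~1]{AtBo09} rather than \cite[Lemma~6]{BoSaTe14}; the former works directly through the classical subanalytic {\L}ojasiewicz inequality and does not require $F$ to be constant on the compact set, which is exactly the obstacle you flag with the naive finite-cover argument.
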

\begin{proof}
Let ${\frak D}_0 := \{x:\;\ell(Ax-b)\le \delta\}$ and $\mathfrak{D}_1 = \{(x,y,w):\; \bar G(x,y,w)\le 0\}$, where $\bar G$ is as in \eqref{barGell1}. Since $\ell$ and $\bar G$ are analytic, we have that ${\frak D}_0$ and $\mathfrak{D}_1$ are semianalytic; see \cite[Page~596]{FacPan03} for the definition.

   On the other hand, since $x\mapsto\|x\|_1 -\mu\|x\|$ is semialgebraic, it holds that ${\frak F}_0:= \{(x,z):\; z=\|x\|_1 - \mu\|x\|\}$ and $\mathfrak{F}_1:=\{(x,y,w,z):\;z=\|x\|_1 - \mu\|x\|\}$ are subanalytic (see \cite[Page~597(p2)]{FacPan03} for the subanalyticity of ${\frak F}_1$). Therefore,
   \[
   {\rm gph}(F) = \mathfrak{F}_0\cap ({\frak D}_0\times \R)\ \ {\rm and}\ \ {\rm gph}(\bar F) = \mathfrak{F}_1\cap(\mathfrak{D}_1\times \R)
   \]
   are subanalytic, thanks to \cite[Page~597(p1)\&(p2)]{FacPan03}. Also, the functions $F$ and $\bar F$ have closed domains and are continuous on their respective domains. Thus, the desired conclusion follows from \cite[Theorem~3.1]{BoDaLe07} \rerevise{and a standard compactness argument as in the proof of \cite[Lemma~1]{AtBo09}}.
\end{proof}

We next focus on two common choices of $\ell$ in \eqref{lOr}: $\ell(\cdot)=\frac12\|\cdot\|^2$ (for Gaussian noise  \cite{vaFr09}) and $\ell(\cdot) = \|\cdot\|_{LL_2,\gamma}$  being the Lorentzian norm (for Cauchy noise \cite{CaBaAy10}) for some $\gamma>0$. We will discuss how to verify the other assumptions necessary for the applications of Theorem~\ref{bastationKL} or Theorem~\ref{convexKL} to \eqref{lOr} with these two choices of $\ell$.

\subsection{When $\ell(\cdot)=\frac12\|\cdot\|^2$}\label{sec5.1}
In this case, the \revise{model \eqref{lOr}} becomes
\begin{equation}\label{Leastsq}
\begin{array}{rl}
  \min\limits_{x}& \ \|x\|_1 - \mu\|x\|\\
  {\rm s.t.}&\  \frac12\|Ax - b\|^2\leq\delta,
  \end{array}
\end{equation}
and the corresponding $F$ in \eqref{Or} becomes:
\begin{align}\label{Fleast}
F(x)=\|x\|_1 - \mu\|x\| +\delta_{g(\cdot)\le 0}(x),
\end{align}
with $f= 0$, $P_1(x) = \|x\|_1$, $P_2(x) = \mu\|x\|$ and   $g(x)=\frac12\|Ax -b\|^2 - \delta$ for  $A$, $b$, $\delta$ and $\mu$ as in \eqref{lOr}. Then, for \eqref{Leastsq}, $P_1$ and $P_2$ are convex continuous, and Assumption~\ref{basebase}(i) and (ii) and Assumption~\ref{diffofgi} are satisfied. Moreover, $A$ having full row rank and $\delta\in(0,\frac12\|b\|^2)$ imply that Slater condition holds for \eqref{Leastsq}. Hence, it \rerevise{follows} that $\{x:\; g(x)\le 0\}\neq \emptyset$ \rerevise{and Assumption~\ref{baseass} holds}. In addition, this $P_2$ satisfies Assumption~\ref{p1p2lip} since its only possible point of nondifferentiability (the origin) is not feasible thanks to the fact that $\delta<\frac12\|b\|^2$. Furthermore, the required KL conditions follow from Theorem~\ref{subana}.\footnote{\rerevise{Specifically, if $\mu = 0$, then $F$ is convex and level-bounded, and the set of stationary points (minimizers) is compact. We can then deduce from Theorem~\ref{subana} that $F$ is a KL function with some exponent $\alpha\in [0,1)$. On the other hand, the KL property required in the nonconvex case (see Theorem~\ref{bastationKL}) follows directly from Theorem~\ref{subana}.}}
In order to apply Theorem~\ref{bastationKL} (or Theorem~\ref{convexKL}), we now demonstrate how conditions can be imposed so that Assumption~\ref{basebase}(iii) (level-boundedness) is satisfied.
 \begin{proposition}\label{Prop5.1}
 Let $F$ be defined as in \eqref{Fleast}. The following statements hold:
   \begin{enumerate}[{\rm (i)}]
     \item If $\mu\in[0,1)$, then $F$ is level-bounded.
     \item If $\mu=1$ and $A$ does not have zero columns, then $F$ is level-bounded.
   \end{enumerate}
 \end{proposition}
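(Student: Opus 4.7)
The plan is to handle the two cases separately, using the fact that $\|\cdot\|\le\|\cdot\|_1$ to control the DC objective, and then using the constraint $\frac12\|Ax-b\|^2\le\delta$ together with structural properties of the $\ell_1$ and $\ell_2$ norms for the critical case $\mu=1$.

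For item (i), the argument is elementary. Since $\|x\|\le\|x\|_1$ for every $x\in\R^n$, I would write
\[
F(x)\ge \|x\|_1-\mu\|x\| \ge (1-\mu)\|x\|
\]
on $\{x:g(x)\le 0\}$, and conclude that $\{x:F(x)\le \rho\}$ is contained in the ball of radius $\rho/(1-\mu)$ whenever $\mu\in[0,1)$, hence $F$ is level-bounded.

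For item (ii), where $\mu=1$, the inequality above degenerates, so I argue by contradiction. Suppose $F$ is not level-bounded: then there exists $\rho\in\R$ and a sequence $\{x^k\}$ with $\frac12\|Ax^k-b\|^2\le\delta$, $\|x^k\|_1-\|x^k\|\le\rho$, and $\|x^k\|\to\infty$. From the constraint, $\|Ax^k\|\le\|b\|+\sqrt{2\delta}$, so dividing by $\|x^k\|$ gives $A(x^k/\|x^k\|)\to 0$. Setting $v^k:=x^k/\|x^k\|$ and passing to a subsequence, we may assume $v^k\to v$ with $\|v\|=1$ and $Av=0$. Dividing the inequality $\|x^k\|_1-\|x^k\|\le\rho$ by $\|x^k\|$ yields $\|v^k\|_1-1\le \rho/\|x^k\|\to 0$, and since $\|v^k\|_1\ge\|v^k\|=1$, we conclude $\|v^k\|_1\to 1$, hence $\|v\|_1=1=\|v\|$.

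The main (and really only nontrivial) step will be the structural observation that $\|v\|=\|v\|_1$ with $\|v\|=1$ forces $v$ to be a signed standard basis vector $\pm e_i$: this is the well-known equality case of $\|\cdot\|\le\|\cdot\|_1$, which is saturated exactly when at most one coordinate of $v$ is nonzero. Combined with $Av=0$, this gives that the $i$-th column of $A$ vanishes, contradicting the assumption. The conclusion is that no such unbounded sequence exists, so the level set is bounded and $F$ is level-bounded.
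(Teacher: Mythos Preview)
Your proof is correct and follows essentially the same approach as the paper: a direct coercivity bound for $\mu\in[0,1)$, and for $\mu=1$ a contradiction argument via normalization $x^k/\|x^k\|\to d$, showing $Ad=0$ and $\|d\|_1=\|d\|=1$ so that $d$ is supported on a single coordinate, which forces a zero column of $A$. The only cosmetic difference is that the paper obtains $Ad=0$ by dividing $\frac12\|Ax^k-b\|^2\le\delta$ by $\|x^k\|^2$ directly, whereas you first bound $\|Ax^k\|$ and then divide; both are equivalent.
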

 \begin{proof}
 Note first that if $0\le\mu<1$, then $x\mapsto \|x\|_1 - \mu\|x\|$ is level-bounded and hence (i) holds trivially. We next focus on the case where $\mu=1$.

 Suppose to the contrary that there exists $\sigma$ and  $\{x^t\}\subseteq \left\{x:\;F(x) \leq\sigma \right\} $ such that  $\|x^t\|\to \infty$. By passing to a further subsequence if necessary, we may assume that there exists $d$ with $\|d\| = 1$ and $\lim\limits_{t\to \infty}\frac{x^t}{\|x^t\|} = d$. Since $\frac12\|Ax^t - b\|^2\leq \delta$ thanks to $F(x^t)\le \sigma$ for each $t$, we have
\begin{equation}\label{Ad0}
\begin{split}
\frac12\|Ad\|^2=\lim_{t\to\infty}\frac12\frac{\|Ax^t - b\|^2}{\|x^t\|^2} \leq \lim_{t\to \infty}\frac{\delta}{\|x^t\|^2} = 0.
\end{split}
\end{equation}
On the other hand, since $F(x^t)\le \sigma$, it holds that
\[
0\le\|x^t\|_1 - \|x^t\|\leq\sigma \Longrightarrow 0\le\lim_{t\to\infty}\frac{\|x^t\|_1 - \|x^t\|}{\|x^t\|}  = \|d\|_1 - 1\le0.
\]
This together with $\|d\|=1$ implies that  exactly one  coordinate of $d$ is nonzero.
Since $A$ does not have zero columns, we obtain that $\|Ad\|\neq 0$, which contradicts \eqref{Ad0}. Thus, the statement in (ii) holds.
 \end{proof}

Therefore, if the assumptions in the above proposition hold, one can apply Theorem~\ref{bastationKL} or Theorem~\ref{convexKL} to deducing the convergence rate of the sequence generated by SCP$_{ls}$ when applied to solving \eqref{Leastsq}.
When $\mu=0$ in \eqref{Leastsq}, since we assumed $\delta \in (0,\frac12\|b\|^2)$ and $A$ has full row rank, we know from Remark \ref{ells} that  $x\mapsto \|x\|_1 + \delta_{ \frac12\|A(\cdot) - b\|^2 \le \delta}(x)$ is a KL function with exponent $\frac12$. Consequently, the sequence $\{x^t\}$ generated by SCP$_{ls}$ for \eqref{Leastsq} converges locally linearly.
When $\mu \in(0,1]$, although no explicit KL exponent is known for the corresponding $\bar F$, we still observe in our numerical experiments below that the sequence $\{x^t\}$ generated by SCP$_{ls}$ for \eqref{Leastsq} appears to converge linearly.

\subsection{When $\ell$ is the Lorentzian norm}\label{sec5.2}
Recall that, given $\gamma > 0$, the Lorentzian norm of a vector $y\in\R^q$ is defined as
\[
\|y\|_{LL_{2},\gamma} := \sum_{i=1}^q\log\left(1 + \frac{y_i^2}{\gamma^2}\right).
\]
In this case, the model \eqref{lOr} becomes
\begin{equation}\label{LL2}
\begin{split}
  \min_{x} &\ \|x\|_1 - \mu\|x\|\\
  {\rm s.t.}& \ \|Ax - b\|_{LL_{2},\gamma}\leq\delta,
  \end{split}
\end{equation}
and the corresponding $F$ in \eqref{Or} now takes the following form:
\begin{align}\label{FLL2}
F(x)=\|x\|_1 - \mu\|x\| +\delta_{g(\cdot)\le 0}(x),
\end{align}
with $f= 0$, $P_1(x) = \|x\|_1$, $P_2(x) = \mu\|x\|$ and   $g(x)=\|Ax -b\|_{LL_2,\gamma} - \delta$ for  $A$, $b$, $\delta$ and $\mu$  defined as in \eqref{lOr}.
One can show that the mapping $ z\mapsto\|z\|_{LL_{2},\gamma}-\delta$ has Lipschitz gradient with modulus $L_{\ell} = \frac{2}{\gamma^2}$ and is twice continuously differentiable.
From these one can readily see that $P_1$ and $P_2$ are convex continuous, and Assumption~\ref{basebase}(i) and (ii) and Assumption~\ref{diffofgi} are satisfied. Also, since $A$ has full row rank and $\delta\in(0,\|b\|_{LL_2,\gamma})$, we see that $\{x:\; g(x)\le 0\}\neq \emptyset$. In addition, this $P_2$ satisfies Assumption~\ref{p1p2lip} since its only possible point of nondifferentiability is not feasible, thanks to $\delta\in(0,\|b\|_{LL_2,\gamma})$.
Furthermore, the required KL conditions follow from Theorem~\ref{subana}.
In order to apply Theorem \ref{bastationKL}, we show below that Assumption~\ref{baseass} holds and impose conditions so that Assumption~\ref{basebase}(iii) is satisfied.
\begin{proposition}
 Let $F$ be defined as in \eqref{FLL2}. The following statements hold:
 \begin{enumerate}[{\rm (i)}]
   \item The MFCQ holds in the whole feasible set of \eqref{LL2}.
   \item If $\mu\in[0,1)$, then $F$ is level-bounded.
   \item If $\mu=1$ and $A$ does not have zero columns, then $F$ is level-bounded.
 \end{enumerate}
 \end{proposition}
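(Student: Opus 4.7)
The plan is to handle the three items in order, following the template set by the preceding proposition (Proposition~\ref{Prop5.1}) but adapting each step to the Lorentzian norm.

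For part (i), I would compute $\nabla g(x) = A^T \nabla \ell(Ax-b)$, where $\ell(z) = \|z\|_{LL_2,\gamma}$ has partial derivatives $(\nabla \ell(z))_i = \frac{2z_i/\gamma^2}{1+z_i^2/\gamma^2}$. The key observation is that $\nabla \ell(z)=0$ if and only if $z=0$. Suppose $x$ is feasible with $g(x)=0$. If we had $\nabla g(x) = 0$, then $A^T \nabla \ell(Ax-b) = 0$; since $A$ has full row rank, $A^T$ is injective, so $\nabla \ell(Ax-b) = 0$, forcing $Ax=b$ and $g(x) = \|0\|_{LL_2,\gamma} - \delta = -\delta < 0$, a contradiction. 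Hence $\nabla g(x)\neq 0$ on the boundary, and taking $d = -\nabla g(x)$ yields $\langle \nabla g(x), d\rangle < 0$, which is MFCQ. At strictly feasible points $I(x)=\emptyset$ and MFCQ is vacuous.

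For part (ii), I would use the elementary inequality $\|x\|_1 \ge \|x\|$ to get $\|x\|_1 - \mu\|x\| \ge (1-\mu)\|x\|$, which is coercive when $\mu<1$. Adding the indicator $\delta_{g(\cdot)\le 0}$ preserves level-boundedness, so $F$ is level-bounded.

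For part (iii), I would mimic the proof of Proposition~\ref{Prop5.1}(ii). Assume for contradiction that $\{x^t\}\subseteq\{F\le\sigma\}$ with $\|x^t\|\to\infty$, and pass to a subsequence with $x^t/\|x^t\|\to d$, $\|d\|=1$. The inequality $\|x^t\|_1 - \|x^t\|\le \sigma$, divided by $\|x^t\|$, yields $\|d\|_1 \le 1 = \|d\|$ in the limit; combined with the standard bound $\|d\|_1\ge \|d\|$, this forces $\|d\|_1=1$, so $d$ has exactly one nonzero coordinate. The main (and only substantive) departure from the least-squares case is extracting boundedness of $Ax^t$ from the Lorentzian constraint: the condition $\sum_i \log(1+(Ax^t-b)_i^2/\gamma^2)\le \delta$ implies each summand is at most $\delta$, giving $|(Ax^t-b)_i|\le \gamma\sqrt{e^\delta -1}$ componentwise. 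Dividing by $\|x^t\|$ and taking the limit gives $Ad=0$; but with $d$ supported on a single coordinate $j$ and $A$ having no zero column, $Ad = d_j A_{\cdot,j}\ne 0$, a contradiction.

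The only mildly non-routine step is the Lorentzian-to-componentwise bound in part (iii); everything else is a direct adaptation of the least-squares argument or a short calculation. No single step looks like a serious obstacle.
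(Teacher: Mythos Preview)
Your proposal is correct and follows essentially the same approach as the paper: part (i) uses injectivity of $A^T$ (from the full row rank of $A$) to force $Ax=b$ and derive a contradiction, part (ii) is the trivial coercivity of $(1-\mu)\|x\|$, and part (iii) mimics Proposition~\ref{Prop5.1}(ii) after extracting boundedness of $\{Ax^t\}$ from the Lorentzian constraint. The only cosmetic difference is that you give an explicit componentwise bound $|(Ax^t-b)_i|\le \gamma\sqrt{e^\delta-1}$, whereas the paper simply invokes level-boundedness of the Lorentzian norm.
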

 \begin{proof}
 For (i), using the definition of MFCQ, it suffices to show that for every feasible point $x$ with $g(x) = 0$, it holds that $\nabla g(x) \neq 0$. Suppose to the contrary that there exists $\hat x$ such that $g(\hat x) =0$ and
\[
 \nabla g(\hat x)  = A^T\left(\frac{2(A\hat x - b)_1}{\gamma^2+(A\hat x -b)^2_1},\dots,\frac{2(A\hat x - b)_q}{\gamma^2+(A\hat x -b)^2_q}\right)^T=0.
\]
Since $A$ is surjective, we deduce that $\left(\frac{2(A\hat x - b)_1}{\gamma^2+(A\hat x -b)^2_1},\dots,\frac{2(A\hat x - b)_q}{\gamma^2+(A\hat x -b)^2_q}\right) = 0$. This shows that $A\hat x - b = 0$ and thus $g(\hat x) = \|A\hat x - b\|_{LL_2,\gamma} - \delta = -\delta\neq 0$, a contradiction. Therefore, the MFCQ holds in the whole feasible set of \eqref{LL2}.

The assertion in (ii) holds trivially. We now prove (iii).
Suppose to the contrary that there exist $\sigma$ and $\{x^t\}\subseteq \left\{x:\;F(x) \leq\sigma \right\} $ such that $\|x^t\|\to \infty$. By passing to a further subsequence if necessary, we may assume that there exists $d$ with $\|d\| = 1$ and $d = \lim\limits_{t\to \infty}\frac{x^t}{\|x^t\|}$. Since $\ell(Ax^t - b)\le 0$ thanks to $F(x^t)\le \sigma$ for each  $t$, and the Lorentzian norm is level-bounded, we see that there exists $\xi$ such that
$\|Ax^t - b\|\le \xi$ for all $t$. The rest of the proof is then the same as that of Proposition~\ref{Prop5.1}(ii).
\end{proof}

Therefore, if the assumptions in the above proposition hold, one can apply Theorem~\ref{bastationKL} to deducing the convergence rate of the sequence $\{x^t\}$ generated by SCP$_{ls}$ when applied to solving \eqref{LL2}.
Although no explicit KL exponent is known for the corresponding $\bar F$, in our numerical experiments below, we observe empirically that the sequence $\{x^t\}$ generated by SCP$_{ls}$ for \eqref{LL2} appears to converge linearly.

\subsection{Numerical experiments}
\revise{In this subsection, we perform numerical experiments to illustrate the convergence results  of SCP$_{ls}$ established in Section \ref{sec3}. We apply SCP$_{ls}$ to \eqref{lOr} with $\ell$ being either $\frac12\|\cdot\|^2$ (as in \eqref{Leastsq}) or the Lorentzian norm (as in \eqref{LL2}). We also consider the SCP in \cite{Lu10} in our experiments below.}

\paragraph{\bf Algorithms and their parameters}
We consider the following algorithms:
\begin{enumerate}[{\rm (i)}]
\item\textbf{SCP$_{ls}$}:
We solve the corresponding subproblem \eqref{subp} through a root-finding scheme outlined in Appendix A.
  Moreover, we let $\tau = 2$, $c = 10^{-4}$, ${\b L} = 10^{-8}$, ${\rm \bar L} = 10^8$.
For $t=0$, we choose $L_f^{t,0} =1$ and $L_{g}^{t,0} = 1$. For $t\ge 1$, we choose:
\begin{align*}
L_f^{t,0} =1,\ L_{g}^{t,0} =\begin{cases} \max\left\{10^{-8}, \min\left\{\frac{\langle\Delta x,\Delta g\rangle}{\|\Delta x\|^2}, 10^8\right\}\right\}  &{\rm if\ } \langle\Delta x,\Delta g\rangle \ge 10^{-12},\\
                             \rerevise{\max\left\{10^{-8}, \min\left\{L_g^{t-1}/\tau, 10^8\right\}\right\} } &{\rm else},
                             \end{cases}
\end{align*}
where $\Delta x  = x^t - x^{t-1}$ and $\Delta g = \nabla g(x^t) -\nabla g(x^{t-1})$. We initialize SCP$_{ls}$ at $A^\dagger b$ and terminate it when $\|x^{t+1} - x^t\|< 10^{-8}\max\{1,\|x^{t+1}\|\}$.
  \item\textbf{SCP}: \revise{This was proposed in \cite{Lu10}.} The subproblem of SCP is solved using  a root-finding scheme outlined in Appendix A. We initialize SCP at $A^\dagger b$ and terminate it when $\|x^{t+1} - x^t\|< 10^{-8}\max\{1,\|x^{t+1}\|\}$.

\end{enumerate}

\paragraph{\bf Numerical results}
All codes are written in Matlab, and the experiments are performed in Matlab \revise{2019b} on a 64-bit PC with an Intel(R) Core(TM) i7-4790 CPU (3.60GHz) and 32GB of RAM.

\revise{ For both models \eqref{Leastsq} and \eqref{LL2}, we consider either $\mu = 0$ or $1$.}
In our tests, we let $q = 720i$ and $n = 2560i$ with $i=5$. We generate an $A\in\R^{q\times n}$ with i.i.d standard Gaussian entries, and then normalize this matrix so that each column of $A$ has unit norm. Then we choose a subset $T$ of size $s_0 = [\frac{q}{9}]$ uniformly at random from $\{1,2,\dots,n\}$ and an $s_0$-sparse vector $x_{\rm orig}$ having i.i.d. standard Gaussian entries on $T$ is generated.

  For  \eqref{Leastsq}, we let $b = Ax_{\rm orig} + 0.01\cdot \hat{n}$ with $\hat{n}\in\R^q$ being a random vector with i.i.d. standard Gaussian entries. We then set the  $\delta$ in \eqref{Leastsq} to be $\frac12\sigma^2$ with $\sigma = 1.1\| 0.01\cdot \hat{n}\|$.

  For \eqref{LL2}, we let $b = Ax_{\rm orig} + 0.01\cdot \bar n$ with $\bar n_i\sim {\rm Cauchy(0,1)}$, i.e., $\bar n_i := \tan(\pi(\widetilde{n}_i - 1/2))$ with $\widetilde n\in\R^m$ being a random vector with i.i.d. entries uniformly chosen in $[0,1]$. We set the $\delta$ in \eqref{LL2} to be $1.1\|0.01\bar n\|_{LL_2,\gamma}$ with $\gamma = 0.02$.

 We compare the approximate solution obtained by SCP$_{ls}$ and the original sparse solution in Figures~\ref{figure1} and \ref{figure2} to illustrate the recovery  ability of  SCP$_{ls}$. In Figures~\ref{figure3} and \ref{figure4}, we plot $\|x^t- x^{out}\|$ (in logarithmic scale) against the number of iterations, where $x^t$ and $x^{out}$ are respectively the $t^{\rm th}$ iterate and the approximate solution obtained by the algorithm under study. As we can see, SCP$_{ls}$ always appears to converge linearly and is also faster than \revise{SCP}.

\begin{figure}[tbhp]
\centering
\caption{Recovery results by solving model \eqref{Leastsq} with $\mu=0$ (left) and $\mu = 1$ (right) via SCP$_{ls}$. The approximate solution obtained by SCP$_{ls}$ is marked by asterisk, and $x_{\rm orig}$ is marked by circle.}\label{figure1}
\subfloat{\includegraphics[scale = 0.3]{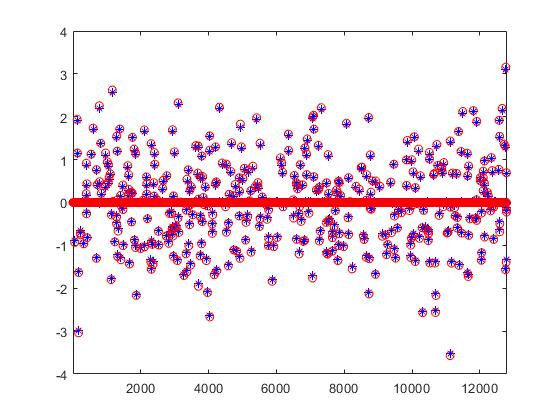}}
\subfloat{\includegraphics[scale = 0.3]{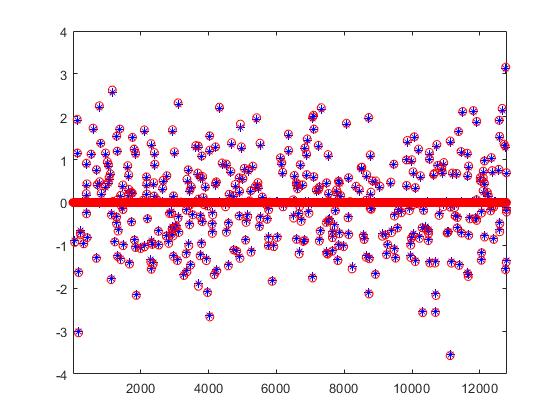}}
\centering
\caption{Recovery results by solving model \eqref{LL2}   with $\mu=0$ (left) and $\mu = 1$ (right) via SCP$_{ls}$. The approximate solution obtained by SCP$_{ls}$ is marked by asterisk, and $x_{\rm orig}$ is marked by circle.}\label{figure2}
\subfloat{\includegraphics[scale = 0.3]{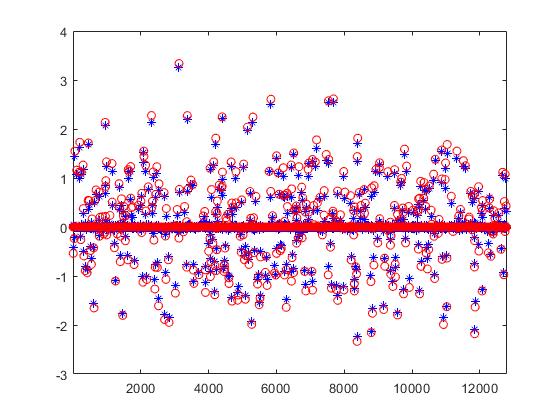}}
\subfloat{\includegraphics[scale = 0.3]{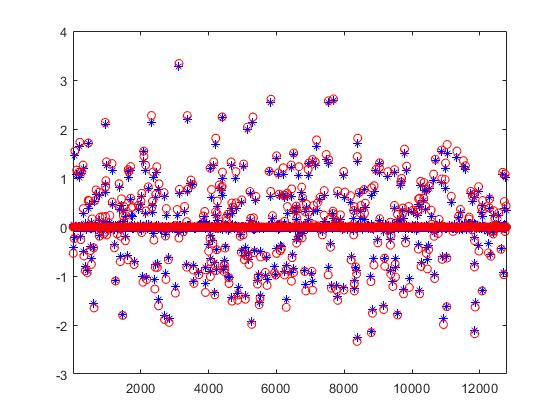}}
\end{figure}

\begin{figure}[tbhp]
\centering
\caption{Plot of $\|x^t-x^{\rm out}\|$ (in log scale) for model \eqref{Leastsq}   with $\mu=0$ (left) and $\mu = 1$ (right). The number in the parenthesis is the CPU time taken.}\label{figure3}
\subfloat{\includegraphics[scale = 0.32]{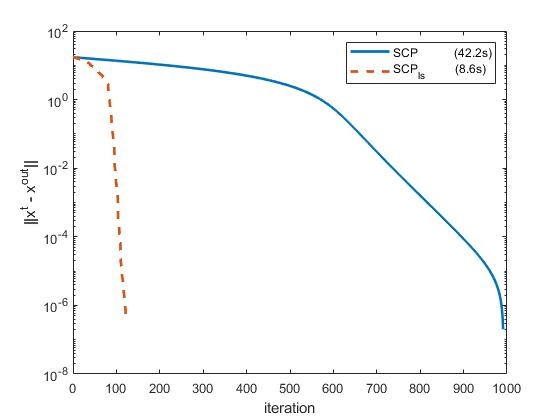}}
\subfloat{\includegraphics[scale = 0.32]{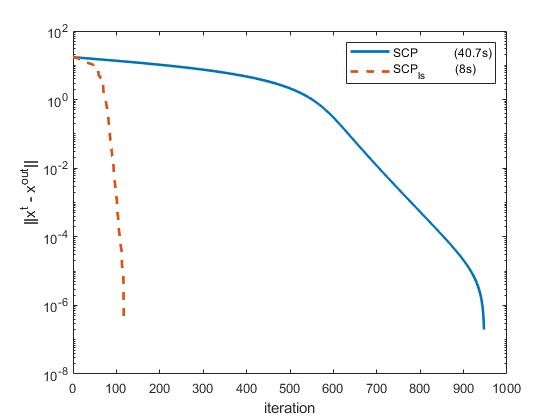}}
\centering
\caption{Plot of $\|x^t-x^{\rm out}\|$ (in log scale) for model \eqref{LL2}   with $\mu=0$ (left) and $\mu = 1$ (right). The number in the parenthesis is the CPU time taken.}\label{figure4}
\subfloat{\includegraphics[scale = 0.32]{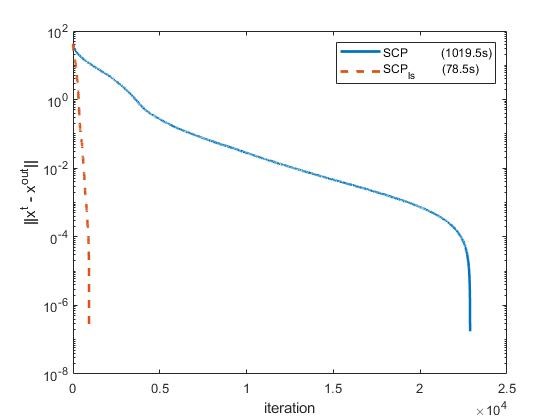}}
\subfloat{\includegraphics[scale = 0.32]{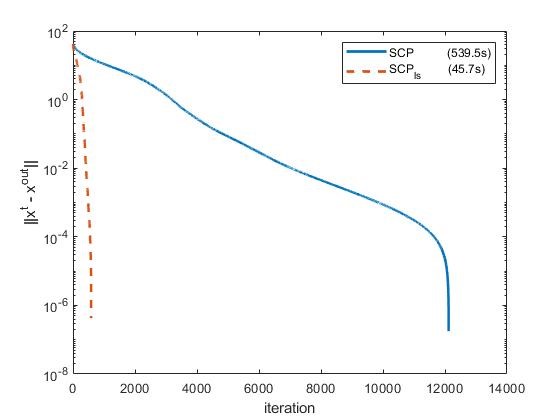}}
\end{figure}

\appendix

\section{Solving the subproblem of SCP$_{ls}$ with $P_1$ being the $\ell_1$ norm, $P_2= 0$ and $m = 1$}

We discuss how the subproblem \eqref{subp} that arises in our numerical tests when SCP$_{ls}$ is applied to \eqref{lOr} can be solved efficiently. Our approach is based on a root-finding strategy for solving the dual, which was also adopted in \cite{ShTe16} for solving the subproblem that arises in the MBA variant there. Comparing with the subproblem considered in \cite{ShTe16}, our subproblem has an additional quadratic term, which slightly complicates the derivation and implementation.

At the $t^{\rm th}$ iteration, the corresponding subproblem \eqref{subp} that arises when SCP$_{ls}$ is applied to \eqref{lOr} takes the following form:
\begin{equation}\label{subproblem}
  \begin{array}{rl}
    \min\limits_{x} & \|x\|_1 + \frac\alpha2\|x - y\|^2\\
    {\rm s.t.} & \|x - s\|^2 \le r,
  \end{array}
\end{equation}
where $y$, $s\in \R^n$, $\alpha> 0$ and $r > 0$.\footnote{The fact that $r > 0$ follows from Lemma~\ref{rdayu0}(iii).}

Recall that the Lagrangian function for \eqref{subproblem} is given by
\begin{align*}
\widetilde L(x,\lambda)= \|x\|_1 + \frac{\alpha}{2}\|x-y\|^2 + \lambda(\|x- s\|^2 -r).
\end{align*}
Using \cite[Corollary~28.2.1, Theorem~28.3]{Ro70}, we know that there exists $(x^*,\lambda^*)$ with $\lambda^* \ge 0$ such that $x^*$ is optimal for \eqref{subproblem} and
\[
\min_{x\in\R^n} \widetilde L(x,\lambda^*) = \min_{x\in\R^n} \|x\|_1 + \frac{\alpha}{2}\|x-y\|^2 + \delta_{\|(\cdot)- s\|^2 \le r }(x).
\]
If $\lambda^* = 0$, then the solution $\check x$ of $\min\limits_{x\in\R^n}  \|x\|_1 +\frac\alpha2\|x-y\|^2$
lies in $\{x:\;\| x  - s\|^2\le r\}$ and $\check x$ solves \eqref{subproblem}. Moreover, $\check x$ is given explicitly as ${\rm sign}(y)\circ\max\{|y| - \frac1\alpha,0\}$, where $\circ$ denotes the entrywise product, and the sign function, absolute value and maximum are taken componentwise.

If $\lambda^*>0$, using \cite[Theorem~28.3]{Ro70}, we obtain that
\begin{equation}\label{A2}
0\in \partial \|x^*\|_1 + \alpha(x^*-y) + 2\lambda^*(x^*-s)\ {\rm and}\ \|x^*-s\|^2 = r.
\end{equation}
Using the first relation in \eqref{A2}, we have
\begin{align}\label{A3}
x^*= {\rm Prox}_{\frac{1}{\alpha + 2\lambda^*}\|\cdot\|_1}\left(\frac{\alpha}{\alpha + 2\lambda^*}y + \frac{2\lambda^*}{\alpha + 2\lambda^*}s\right),
\end{align}
where ${\rm Prox}_h(u):=\argmin\limits_{v\in\R^n}\left\{h(v) + \frac12\|u - v\|^2\right\}$ for a proper closed convex function $h$.
Plugging this into the second relation in \eqref{A2}, we see that $\lambda^*$ can be obtained by solving the following one-dimensional nonsmooth equation and the solution $x^*$ can then be recovered via \eqref{A3}:
\[
\left\|{\rm Prox}_{\frac{1}{\alpha + 2\lambda^*}\|\cdot\|_1}\left(\frac{\alpha}{\alpha + 2\lambda^*}y + \frac{2\lambda^*}{\alpha + 2\lambda^*}s\right)-s\right\|^2 = r.
\]
Upon the transformation $t^* = \frac{\alpha}{\alpha + 2\lambda^*}$, the above equation becomes piecewise linear quadratic and can be solved efficiently by a standard root-finding procedure.

In passing, we note that a solution procedure for the subproblem that arises when SCP is applied to \eqref{lOr} can be derived similarly, where the subproblem takes the form
\begin{equation*}
  \begin{array}{rl}
    \min\limits_{x} & \|x\|_1- \left<\xi,x\right>\\
    {\rm s.t.} & \|x - s\|^2 \le r,
  \end{array}
\end{equation*}
for some $\xi$, $s\in \R^n$ and $r > 0$. We omit the details for brevity.

\end{document}